\crefname{equation}{}{}
\crefname{figure}{Figure}{Figures}
\crefname{assumption}{Assumption}{Assumptions}
\crefname{condition}{Condition}{Conditions}
\renewcommand\th{\textsuperscript{th}\xspace}
\newcommand\undermat[2]{%
	\makebox[0pt][l]{$\smash{\underbrace{\phantom{%
					\begin{matrix}#2\end{matrix}}}_{\text{$#1$}}}$}#2}
\setlist[enumerate,1]{leftmargin=*,wide=0em, noitemsep,nolistsep, label = {\bfseries \arabic*.}}
\setlist[itemize,1]{leftmargin=*,wide=0em, noitemsep,nolistsep}
\newcommand\tsup[2][2]{%
	\def\useanchorwidth{T}%
	\ifnum#1>1%
	\stackon[-.5pt]{\tsup[\numexpr#1-1\relax]{#2}}{\scriptscriptstyle\sim}%
	\else%
	\stackon[.5pt]{#2}{\scriptscriptstyle\sim}%
	\fi%
}
\newcommand{\df}{\mathrm{d}}
\newcommand{\reals}{\mathbb{R}}
\DeclareMathOperator*{\argmin}{arg\,min}
\DeclareMathOperator*{\Argmin}{Arg\,min}
\renewcommand {\AA}  { {\mathbf{A}} }
\newcommand {\BB}  { {\mathbf{B}} }
\newcommand {\DD}  { {\mathbf{D}} }
\newcommand {\EE}  { {\mathbf{E}} }
\newcommand {\HH}  { {\mathbf{H}} }
\newcommand {\VV}  { {\mathbf{V}} }
\newcommand {\UU}  { {\mathbf{U}} }
\newcommand {\PP}  { {\mathbf{P}} }
\newcommand {\QQ}  { {\mathbf{Q}} }
\newcommand{\eye}{\mathbf{I}}
\newcommand {\zz}  { {\bf z} }
\newcommand {\bgg}  { {\bf g} }
\newcommand {\xx}  { {\bf x} }
\newcommand {\yy}  { {\bf y} }
\renewcommand {\aa}  { {\bf a} }
\newcommand {\qq}  { {\bf q} }
\newcommand {\pp}  { {\bf p} }
\newcommand {\uu}  { {\bf u} }
\newcommand {\vv}  { {\bf v} }
\newcommand {\ww}  { {\bf w} }
\newcommand {\bb}  { {\bf b} }
\newcommand {\zero}  { {\bf 0} }
\newcommand {\alphak}  { {{\alpha}_{k}} }
\newcommand {\HHk}  { {\HH_{k}} }
\newcommand {\bggk}  { {{\bgg}_{k}} }
\newcommand {\bggkk}  { {{\bgg}_{k+1}} }
\newcommand {\yys}  { {\yy^{\star}} }
\newcommand {\xxs}  { {\xx^{\star}} }
\newcommand {\xxo}  { {{\xx}_{0}} }
\newcommand {\xxk}  { {{\xx}_{k}} }
\newcommand {\xxkk}  { {{\xx}_{k+1}} }
\newcommand {\ppk}  { {{\pp}_{k}} }
\newcommand {\rank}  { {\textnormal{Rank}} }
\newcommand {\range}  { {\textnormal{Range}} }
\newcommand {\Null}  { {\textnormal{Null}} }
\newcommand {\Span}  { {\textnormal{Span}} }
\newcommand {\teps}  { {\tilde{\epsilon}} }
\newcommand {\tnu}  { {\tilde{\nu}} }
\newcommand {\tr}  { {\tilde{r}} }
\newcommand {\tuu}  { {\tilde{\uu}} }
\newcommand {\tgamma}  { {\tilde{\gamma}} }
\newcommand {\tSigma}  { {\tilde{\Sigma}} }
\newcommand {\tHHd}  { {{\tilde{\HH}}^{\dagger}} }
\newcommand {\tHH}  { {\tilde{\HH}} }
\newcommand {\HHd}  { {\HH^{\dagger}} }
\newcommand {\HHdk}  { {\left[\HH_{k}\right]^{\dagger}} }
\newcommand {\tHHk}  { {\tilde{\HH}_{k}} }
\newcommand {\tHHdk}  { {\left[\tilde{\HH}_{k}\right]^{\dagger}} }
\newcommand {\UUt}  { {\UU^{\intercal}} }
\newcommand {\tUU}  { {\tilde{\UU}} }
\newcommand {\UUp}  { {\UU_{\perp}} }
\newcommand {\tUUt}  { {\tilde{\UU}^{\intercal}} }
\newcommand {\VVt}  { {\VV^{\intercal}} }
\newcommand {\tVV}  { {\tilde{\VV}} }
\newcommand {\tVVt}  { {\tilde{\VV}^{\intercal}} }
\newcommand {\tAA}  { {\tilde{\AA}} }
\newcommand {\ppkt}  { {\pp_{k}^{(t)}} }
\newcommand {\QQt}  { {\QQ_{t}} }
\newcommand {\TT}  { {\mathbf{T}} }
\newcommand {\TTt}  { {\mathbf{T}_{t}} }
\newcommand {\tPP}  { {\tilde{\PP} }}
\newcommand{\hf}{\frac12}
\newcommand{\defeq}{\triangleq}
\renewcommand{\Pr}{\hbox{\bf{Pr}}}
\definecolor{forestgreen}{rgb}{0.13, 0.55, 0.13}
\definecolor{amber}{rgb}{1.0, 0.75, 0.0}
\definecolor{bananayellow}{rgb}{.8, 0.6, 0}
\definecolor{cadmiumyellow}{rgb}{1.0, 0.96, 0.0}
\definecolor{magenta}{rgb}{1, 0, 1}
\newcounter{comment}\setcounter{comment}{0}
\newmdtheoremenv[%
linewidth = 1pt,%
roundcorner = 10pt,%
leftmargin = 0,%
rightmargin = 0,%
backgroundcolor = green!3,%
outerlinecolor = blue!70!black,%
%innertopmargin = \topskip,%
%innerbottommargin=\topskip,%
splittopskip = \topskip,%
ntheorem = true,%
]{theorem}{Theorem}
\newmdtheoremenv[%
linewidth = 1pt,%
roundcorner = 10pt,%
leftmargin = 0,%
rightmargin = 0,%
backgroundcolor = green!3,%
outerlinecolor = blue!70!black,%
%innertopmargin = \topskip,%
%innerbottommargin=\topskip,%
splittopskip = \topskip,%
ntheorem = true,%
]{corollary}{Corollary}
\newmdtheoremenv[%
linewidth = 1pt,%
roundcorner = 10pt,%
leftmargin = 0,%
rightmargin = 0,%
backgroundcolor = green!3,%
outerlinecolor = blue!70!black,%
%innertopmargin = \topskip,%
%innerbottommargin=\topskip,%
splittopskip = \topskip,%
ntheorem = true,%
]{lemma}{Lemma}
\newmdtheoremenv[%
linewidth = 1pt,%
roundcorner = 10pt,%
leftmargin = 0,%
rightmargin = 0,%
backgroundcolor = blue!3,%
outerlinecolor = blue!70!black,%
%innertopmargin = \topskip,%
%innerbottommargin=\topskip,%
splittopskip = \topskip,%
ntheorem = true,%
]{definition}{Definition}
\newmdtheoremenv[%
linewidth = 1pt,%
roundcorner = 10pt,%
leftmargin = 0,%
rightmargin = 0,%
backgroundcolor = green!3,%
outerlinecolor = blue!70!black,%
%innertopmargin = \topskip,%
%innerbottommargin=\topskip,%
splittopskip = \topskip,%
ntheorem = true,%
]{proposition}{Proposition}
\newmdtheoremenv[%
linewidth = 1pt,%
roundcorner = 10pt,%
leftmargin = 0,%
rightmargin = 0,%
backgroundcolor = green!3,%
outerlinecolor = blue!70!black,%
%innertopmargin = \topskip,%
%innerbottommargin=\topskip,%
splittopskip = \topskip,%
ntheorem = true,%
]{condition}{Condition}
\newmdtheoremenv[%
linewidth = 1pt,%
roundcorner = 10pt,%
leftmargin = 0,%
rightmargin = 0,%
backgroundcolor = yellow!3,%
outerlinecolor = blue!70!black,%
%innertopmargin = \topskip,%
%innerbottommargin=\topskip,%
splittopskip = \topskip,%
ntheorem = true,%
]{assumption}{Assumption}
\theoremstyle{definition}
\newmdtheoremenv[%
linewidth = 1pt,%
roundcorner = 10pt,%
leftmargin = 0,%
rightmargin = 0,%
backgroundcolor = cyan!3,%
outerlinecolor = blue!70!black,%
%innertopmargin = \topskip,%
%innerbottommargin=\topskip,%
splittopskip = \topskip,%
ntheorem = true,%
]{example}{Example}
\theoremstyle{definition}
\newmdtheoremenv[%
linewidth = 1pt,%
roundcorner = 10pt,%
leftmargin = 0,%
rightmargin = 0,%
backgroundcolor = red!3,%
outerlinecolor = blue!70!black,%
%innertopmargin = \topskip,%
%innerbottommargin=\topskip,%
splittopskip = \topskip,%
ntheorem = true,%
]{remark}{Remark}
\NewDocumentCommand\DownArrow{O{2.0ex} O{black}}{%
	\mathrel{\tikz[baseline] \draw [<-, line width=0.5pt, #2] (0,0) -- ++(0,#1);}
}
\definecolor{mygreen}{rgb}{0,0.6,0}
\definecolor{mygray}{rgb}{0.5,0.5,0.5}
\definecolor{mymauve}{rgb}{0.58,0,0.82}
\tiny\color{mygray}, % the style that is used for the line-numbers
\newcommand*\dotprod[1]{\left\langle #1\right\rangle}
\newcommand*\vnorm[1]{\left\| #1\right\|}
\newcommand\abs[1]  {  {\left| #1 \right|} }
\newcommand*\bigO[1]{\mathcal O\left( #1\right)}
\newcommand*\bigOt[1]{\tilde{\mathcal O}\left( #1\right)}
\begin{document}

\title{Convergence of Newton-MR under Inexact Hessian Information}

\author{Yang Liu\thanks{School of Mathematics and Physics, University of Queensland, Australia. Email: \tt{yang.liu2@uq.edu.au}}
	\and
	Fred Roosta\thanks{School of Mathematics and Physics, University of Queensland, Australia, and International Computer Science Institute, Berkeley, USA. Email: \tt{fred.roosta@uq.edu.au}}
}
\date{\today}
\maketitle

\abstract{
	Recently, there has been a surge of interest in designing variants of the classical Newton-CG in which the Hessian of a (strongly) convex function is replaced by suitable approximations. This is mainly motivated by large-scale finite-sum minimization problems that arise in many machine learning applications. Going beyond convexity, inexact Hessian information has also been recently considered in the context of algorithms such as trust-region or (adaptive) cubic regularization for general non-convex problems. Here, we do that for Newton-MR, which extends the application range of the classical Newton-CG beyond convexity to invex problems. Unlike the convergence analysis of Newton-CG, which relies on spectrum preserving Hessian approximations in the sense of L\"{o}wner partial order, our work here draws from matrix perturbation theory to estimate the distance between the subspaces underlying the exact and approximate Hessian matrices. Numerical experiments demonstrate a great degree of resilience to such Hessian approximations, amounting to a highly efficient algorithm in large-scale problems.
}

\section{Introduction}
\label{sec:Intro}
Consider the unconstrained optimization problem:
\begin{equation}
\label{eq:obj}
\min_{\xx \in \mathbb{R}^{d}} f(\xx),
\end{equation}
where $ f:\mathbb{R}^{d} \rightarrow \mathbb{R} $. Due to simplicity and solid theoretical foundations, there is an abundance of algorithms designed specifically for the case where $ f $ is convex \cite{bertsekas2015convex,nesterov2004introductory,boyd2004convex}. Strong-convexity, as a special case, allows for the design of algorithms with a great many theoretical and algorithmic properties. 
In such settings, the classical Newton's method and its Newton-CG variant hold a special place. In particular, for strongly-convex functions with sufficient degree of smoothness, they have been shown to enjoy various desirable properties including insensitivity to problem ill-conditioning \cite{ssn2018,xu2016sub}, problem-independent local convergence rates \cite{ssn2018}, and robustness to hyper-parameter tuning \cite{kylasa2018gpu,berahas2017investigation}.

Arguably, the only drawback of Newton's method in large-scale problems is the computational cost of applying the Hessian matrix.  
For example, consider the canonical problem of finite-sum minimization where 
\begin{align}
	\label{eq:obj_sum}
	f(\xx) = \frac{1}{n}\sum_{i=1}^{n} f_{i}(\xx),
\end{align}
and each $ f_{i} $ corresponds to an observation (or a measurement), which models the loss (or misfit) given a particular choice of the underlying parameter $ \xx $. 
Problems of the form \cref{eq:obj_sum} arise very often in machine learning, e.g., \cite{shalev2014understanding}, as well as scientific computing, e.g., \cite{rodoas1}. 
Here, the Hessian matrix can be written as $\HH(\xx) = \sum_{i = 1}^{n} \nabla^2 f_i (\xx)/n$.
In big-data regime where $ n \gg 1 $, operations with the Hessian of $ f $, e.g., matrix-vector products, typically constitute the main bottleneck of computations. 
In this light, several recent efforts have focused on the design and analysis of variants of Newton's method in which the exact Hessian matrix, $ \HH(\xx) \triangleq \nabla^{2} f(\xx) $ is replaced with its suitable approximation $ \tHH(\xx) \approx \nabla^{2} f(\xx) $.
One such approximation strategy is randomized sub-sampling in which by considering a sample of size $ |\mathcal{S}| \ll n$, we can form the sub-sampled Hessian as $\tHH(\xx) = \sum_{j \in \mathcal{S}} \nabla^2 f_j (\xx)/|\mathcal{S}|$.
Under certain conditions, sub-sampling Hessian has been shown to be very effective in reducing the overall computational costs, e.g., \cite{ssn2018,bollapragada2016exact,byrd2011use,byrd2012sample,erdogdu2015convergence}. In certain special cases, the structure of $ f $ could also be used for more sophisticated randomized matrix approximation strategies such as sketching and those based on statistical leverage scores, e.g., \cite{pilanci2015newton,xu2016sub}.

However, in the absence of either sufficient smoothness or strong-convexity, the classical Newton's method and its Newton-CG can simply break down, e.g., their underlying sub-problems may fail to have a solution. Hence, many Newton-type variants have been proposed which aim at extending Newton's method beyond strongly-convex problems, e.g., Levenberg-Marquardt \cite{levenberg1944algorithm,marquardt1963algorithm}, trust-region \cite{conn2000trust}, cubic regularization \cite{nesterov2006cubic,cartis2011adaptiveI,cartis2011adaptiveII}, and various other methods, which make clever use of negative curvature when it arises \cite{carmon2016accelerated,royer2018complexity,royer2020newton,carmon2017convex}. Variants of these methods using inexact function approximations, including approximate Hessian, have also been studied, e.g.,  \cite{cartis2012complexity,chen2015stochastic,blanchet2016convergence,bandeira2014convergence,larson2016stochastic,shashaani2016astro,gratton2017complexity,xuNonconvexTheoretical2017,yao2018inexact,xuNonconvexEmpirical2017}. 
However, many of these methods rely on strict smoothness assumptions such as Lipschitz continuity of gradient and Hessian. In addition, in sharp contrast to Newton's method whose sub-problems are simple linear systems, a vast majority of these methods involve sub-problems that are themselves non-trivial to solve, e.g., the sub-problems of trust-region and cubic regularization methods are non-linear and non-convex.

To extend the application range of the classical Newton's method beyond strongly-convex settings, while maintaining the simplicity of its sub-problems, Newton-MR \cite{roosta2018newton} has recently been proposed. Iterations of Newton-MR, at a high level, can be written as $ \xxkk = \xxk - \alphak \HHdk \bggk $, where $ \alphak $ is some suitably chosen step-size and $ \HHdk $ is the Moore-Penrose generalized inverse of $ \HHk $. On the surface, Newton-MR bares a striking resemblance to the classical Newton's method and shares several of its desirable properties, e.g., Newton-MR involves simple sub-problems in the form of ordinary least squares. However, not only does Newton-MR requires more relaxed smoothness assumptions compared with most non-convex Newton-type methods, but also it can be readily applied to a class of non-convex problems known as \emph{invex} \cite{mishra2008invexity,ben1986invexity} , which subsumes convexity as a sub-class.
Recall that the class of invex functions, first studied in~\cite{hanson1981sufficiency}, extends the sufficiency of the first order optimality condition to a broader class of problems than simple convex programming. In other words, the necessary and sufficient condition for any minimizer of invex problem \cref{eq:obj} is  $\nabla f(\xx^{*}) = 0$. However, this is alternatively equivalent to having $\vnorm{\nabla f(\xx^{*})} = 0$, which in turn give rise to the following auxiliary non-convex optimization problem
\begin{align}
	\label{eq:obj_grad}
	\min_{\xx \in \mathbb{R}^{d}} \hf \vnorm{\nabla f(\xx)}^{2}.
\end{align}
Since the global minimizers of \cref{eq:obj_grad} are the stationary points of \cref{eq:obj}, when $ f $ is invex, the global minimizers of \cref{eq:obj} and \cref{eq:obj_grad} coincide. Newton-MR is built upon considering \cref{eq:obj_grad}, which in turn makes it suitable for \cref{eq:obj} with invex objectives. For more general non-convex functions, instead of minimizing $ f$, the iterations of Newton-MR converge towards the zeros of its gradient field, which are used in many applications, e.g., exploring the loss landscape in chemical physics \cite{ballard2017energy,wales2003energy} and deep neural networks \cite{frye2019numerically}.
Motivated by the potential and advantages of Newton-MR, in this paper we provide its convergence analysis under inexact Hessian information. In this light, we show that appropriately approximating the Hessian matrix allows for efficient application of Newton-MR to large-scale problems. 
Before delving any deeper, we note that, while the main motivating class of problems for our work here is that of finite-sum minimization \cref{eq:obj_sum}, we develop our theory more generally for \cref{eq:obj}. This is so since for more general objectives, it is also often possible to approximate the Hessian using quasi-Newton methods, e.g., symmetric rank one update \cite{byrd1996analysis,conn1991convergence,cartis2011adaptiveI}, or finite-difference approximations \cite{dennis1996numerical,nocedal2006numerical}.

The rest of this paper is organized as follows. We end this section by introducing the notation and the assumptions on $ f $ used in this paper. In \Cref{sec:pert}, we study inexact Hessian information in light of matrix perturbation theory and establish conditions under which such perturbations satisfy a notion of stability. In \Cref{sec:stability}, we leverage this stability analysis and provide convergence results for Newton-MR with Hessian approximations. Numerical experiments are presented in \Cref{sec:experiments}. Conclusions are gathered in \Cref{sec:conclusion}.

%\subsection{Notation and Assumptions}
%\label{sec:notation_assumptions}
%Before presenting the technical aspects of our work, we briefly introduce the notation as well as the assumptions used throughout this paper.

\subsection{Notation}
\label{sec:notation}
Throughout the paper, vectors and matrices are denoted by bold lower-case and bold upper-case letters, respectively, e.g., $ \vv $ and $ \VV $. 
We use regular lower-case and upper-case letters to denote scalar constants, e.g., $ d $  or $ L $. 
For a real vector, $ \vv $, its transpose is denoted by $ \vv^{\intercal} $. 
For two vectors $ \vv,\ww $, their inner-product is denoted as $ \dotprod{\vv, \ww}  = \vv^{\intercal} \ww $. For a vector $\vv$ and a matrix $ \VV $, $ \|\vv\| $ and $ \|\VV\| $ denote vector $ \ell_{2} $ norm and matrix spectral norm, respectively. 
%$ \nabla f(\xx) $ and $ \nabla^{2} f(\xx) $ are the gradient and the Hessian of $ f $ at $ \xx $, respectively. 
Iteration counter for the main algorithm appears as subscript, e.g., $ \ppk $. Iteration counter for sub-problem solver to obtain $ \ppk $ appears as superscript, e.g., $\ppkt$. The vector of all zero components is denoted by $\bm{0}$. 
For two symmetric matrices $ \AA $ and $ \BB $, the L\"{o}wner partial order $\AA \succeq \BB$ indicates that $ \AA-\BB $ is symmetric positive semi-definite.
For any $ \xx,\zz \in \reals^{2} $, $ \yy \in [\xx, \zz] $ denotes $ \yy = \xx + \tau  (\zz - \xx) $ for some $ 0 \leq \tau \leq 1 $.
$ \AA^{\dagger} $ denotes the Moore-Penrose generalized inverse of matrix $ \AA $. 
The element of a matrix $ \AA $ located at the $ i\th $ row and the $ j\th $ column is denoted by $ [\AA]_{ij} $.
$ \sigma_{i}(\AA) $ denotes $ i\th $ largest singular values of a matrix $ \AA $. The eigenvalues of a symmetric matrix $ \AA \in \reals^{d \times d}$ are ordered as $\lambda_{1}(\AA) \geq \lambda_{2}(\AA) \geq \ldots \lambda_{d}(\AA)$.
For simplicity, we use $ \bgg(\xx) \triangleq \nabla f\left(\xx\right) \in \reals^{d} $ and $ \HH(\xx) \triangleq \nabla^{2} f\left(\xx\right) \in \reals^{d \times d} $ for the gradient and the Hessian of $ f $ at $ \xx $, respectively, and at times we drop the dependence on $ \xx $ by simply using $ \bgg $ and $ \HH $, e.g., $ \bggk = \bgg(\xxk) $ and $ \HHk = \HH(\xxk) $. Finally, ``$ \Argmin $'' implies that the minimum may be attained at more than one point.

\subsection{Assumptions on the Objective Function}
\label{sec:assumptions} 
Here, we introduce the assumptions on $ f $ in \cref{eq:obj} that underlie our work. We note that these assumptions are essentially the same as those in \cite{roosta2018newton}.

\begin{assumption}[Differentiability]
	\label{assmpt:moral_diff}
	\vspace*{-2mm}
	The function $ f $ is twice-differentiable. 
\end{assumption}
In particular, all the first partial derivatives are themselves differentiable, but the second partial derivatives are allowed to be discontinuous. Recall that requiring the first partials be differentiable implies the equality of crossed-partials, which amounts to the symmetric Hessian matrix \cite[pp.\ 732-733]{hubbard2015vector}. 

Instead of the typical smoothness assumptions in the form of Lipschitz continuity of gradient and Hessian, i.e., for some $ 0 \leq L_{\bgg} < \infty $ and $ 0 \leq L_{\HH} < \infty $
\begin{subequations}
	\label{eq:lip_usual}
	\begin{align}
	\vnorm{\bgg(\xx) - \bgg(\yy)} &\leq L_{\bgg} \vnorm{\xx - \yy}, \label{eq:lip_usual_grad} \\
	\vnorm{\HH(\xx) - \HH(\yy)} &\leq L_{\HH} \vnorm{\xx - \yy}, \label{eq:lip_usual_hessian}
	\end{align}
\end{subequations}
a more relaxed notion, called moral-smoothness, was introduced in \cite{roosta2018newton}. 
\begin{assumption}[Moral-smoothness]
	\label{assmpt:lipschitz_special}
	\vspace*{-2mm}
	For any $ \xx_{0} \in \reals^{d} $, there is a constant $ 0 < L(\xx_{0}) < \infty $, such that 
	\begin{align}
	\label{eq:lip_special}
	\vnorm{\HH(\yy) \bgg(\yy)  - \HH(\xx) \bgg(\xx)} \leq L(\xx_{0}) \vnorm{\yy - \xx}, \quad \forall (\xx,\yy) \in \mathcal{X}_{0} \times \reals^{d}, 
	\end{align}
	where $ \mathcal{X}_{0} \triangleq \left\{\xx \in \reals^{d} \mid \| \bgg(\xx) \| \leq \| \bgg(\xx_{0}) \| \right\} $.
\end{assumption}
\cref{assmpt:lipschitz_special} is similar to Lipschitz continuity assumption for the gradient of the auxiliary objective \cref{eq:obj_grad}, albeit restricted to $ \mathcal{X}_{0} \times \reals^{d} $ and with the constant $ L(\xx_{0}) $ that depends on the choice of $ \xx_{0} $.
By \cref{eq:lip_special}, it is only the action of Hessian on the gradient that is required to be Lipschitz continuous, and each gradient and/or Hessian individually can be highly irregular, e.g., gradient can be very non-smooth and Hessian can even be discontinuous. In \cite{roosta2018newton}, it has been shown that \cref{assmpt:lipschitz_special} is significantly more relaxed than \cref{eq:lip_usual}, i.e., \cref{assmpt:lipschitz_special} is implied by \cref{eq:lip_usual} and the converse is not true.

\begin{assumption}[Pseudo-inverse Regularity]
	\label{assmpt:pseudo_regularity}
	\vspace*{-2mm}
	There exists a constant $ \gamma > 0 $, such that
	\begin{align}
	\label{eq:pseudo_regularity}
	\vnorm{\HH(\xx) \pp} \geq \gamma \vnorm{\pp}, \quad \forall \pp \in \textnormal{Range}\left(\HH(\xx)\right).
	\end{align}
\end{assumption}
Intuitively, $ \gamma $ is a uniform lower-bound on the smallest, in magnitude, among the non-zero eigenvalues of $ \HH(\xx) $, for any $ \xx $. \cref{assmpt:pseudo_regularity} also implies that $ \gamma $ is required to be uniformly bounded away from zero for all $ \xx $; see \cite[Example 3]{roosta2018newton} for examples of functions satisfying \cref{assmpt:pseudo_regularity}. 
Furthermore, in \cite{roosta2018newton}, \cref{eq:pseudo_regularity} has been shown to be equivalent to 
\begin{align}
\label{eq:pseudo_regularity_H}
\|\HH^{\dagger}(\xx)\| \leq \frac{1}{\gamma}.
\end{align}

\begin{assumption}[Gradient-Hessian Null-space Property]
	\label{assmpt:null_space}
	\vspace*{-2mm}
	For any $ \xx \in \mathbb{R}^{d} $, let $ \UU $ and $ \UU_{\perp} $ denote arbitrary orthogonal bases for $ \range(\HH(\xx)) $ and its orthogonal complement, respectively. A function is said to satisfy the Gradient-Hessian Null-Space property, if there exists $ 0 < \nu \leq 1 $, such that 
	\begin{align}
	\label{eq:null_space}
	\vnorm{\UU_{\perp}^{\intercal} \bgg(\xx)}^{2} \leq \left( \frac{1 - \nu}{\nu} \right) \vnorm{\UU^{\intercal} \bgg(\xx)}^{2}, \quad \forall \xx \in \reals^{d}.
	\end{align}
\end{assumption}

\Cref{assmpt:null_space} ensures that the angle between the gradient and the range-space of the Hessian matrix is uniformly bounded away from zero. In other words, as iterations progress, the gradient will not become arbitrarily orthogonal to the range space of Hessian. Explicit examples of functions that satisfy this assumption, including the special case when $ \nu = 1 $, i.e., the gradient lies fully in the range of the full Hessian, are given in \cite{roosta2018newton}. The following lemma, also from \cite[Lemma 4]{roosta2018newton}, is a direct consequence of \cref{assmpt:null_space}.

\begin{lemma}[{\!\!\cite[Lemma 4]{roosta2018newton}}]
	\label{lemma:null_space_grad}
	\vspace*{-2mm}
	Under \cref{assmpt:null_space}, we have 
	\begin{subequations}
		\label{eq:null_space_grad}  
		\begin{align}
		\vnorm{\UU^{\intercal}\bgg}^2 &\geq \nu \vnorm{\bgg}^{2}, \label{eq:null_space_grad_1} \\
		\vnorm{\UU_{\perp}^{\intercal}\bgg}^2 &\leq (1 - \nu) \vnorm{\bgg}^{2}. \label{eq:null_space_grad_2}
		\end{align}
	\end{subequations}  
\end{lemma}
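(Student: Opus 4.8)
The plan is to exploit the orthogonal decomposition of $\bgg$ induced by the complementary bases $\UU$ and $\UU_{\perp}$, and then substitute the bound from \cref{assmpt:null_space} into it. Since $\UU$ is an orthonormal basis for $\range(\HH(\xx))$ and $\UU_{\perp}$ is an orthonormal basis for its orthogonal complement, the concatenation $[\UU,\ \UU_{\perp}]$ is a square orthogonal matrix, so $\UU\UU^{\intercal} + \UU_{\perp}\UU_{\perp}^{\intercal} = \eye$. The Pythagorean identity then furnishes the decomposition
\[
\vnorm{\bgg}^{2} = \vnorm{\UU^{\intercal}\bgg}^{2} + \vnorm{\UU_{\perp}^{\intercal}\bgg}^{2}.
\]

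First I would establish \cref{eq:null_space_grad_1}. Using this identity together with the defining bound $\vnorm{\UU_{\perp}^{\intercal}\bgg}^{2} \leq \left((1-\nu)/\nu\right)\vnorm{\UU^{\intercal}\bgg}^{2}$ of \cref{assmpt:null_space}, I would estimate $\vnorm{\bgg}^{2} \leq \vnorm{\UU^{\intercal}\bgg}^{2} + \left((1-\nu)/\nu\right)\vnorm{\UU^{\intercal}\bgg}^{2} = (1/\nu)\vnorm{\UU^{\intercal}\bgg}^{2}$. Rearranging this (and using $\nu > 0$) yields $\vnorm{\UU^{\intercal}\bgg}^{2} \geq \nu\,\vnorm{\bgg}^{2}$, which is exactly \cref{eq:null_space_grad_1}.

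Next, \cref{eq:null_space_grad_2} follows immediately by feeding the lower bound just obtained back into the same orthogonal decomposition: $\vnorm{\UU_{\perp}^{\intercal}\bgg}^{2} = \vnorm{\bgg}^{2} - \vnorm{\UU^{\intercal}\bgg}^{2} \leq \vnorm{\bgg}^{2} - \nu\,\vnorm{\bgg}^{2} = (1-\nu)\vnorm{\bgg}^{2}$.

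I do not expect any genuine obstacle here, as the statement is a purely algebraic rearrangement of \cref{eq:null_space}. The only point deserving explicit justification is the orthogonal decomposition itself, namely the completeness relation $\UU\UU^{\intercal} + \UU_{\perp}\UU_{\perp}^{\intercal} = \eye$; this is precisely where the hypothesis that $\UU_{\perp}$ spans the \emph{full} orthogonal complement of $\range(\HH(\xx))$ is used, ensuring that the two projections together recover the entire squared norm of $\bgg$ with no residual cross term.
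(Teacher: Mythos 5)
Your proposal is correct: the orthogonal decomposition $\vnorm{\bgg}^{2} = \vnorm{\UU^{\intercal}\bgg}^{2} + \vnorm{\UU_{\perp}^{\intercal}\bgg}^{2}$ combined with \cref{eq:null_space} is exactly the argument behind this lemma, which the paper imports without proof from \cite[Lemma 4]{roosta2018newton}. Your derivation matches that standard route, including the key identity $1 + (1-\nu)/\nu = 1/\nu$, so there is nothing to add.
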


\section{Inexact Hessian and Matrix Perturbation Theory}
\label{sec:pert}
Typically, at the heart of convergence analysis of various Newton-type methods with inexact Hessian information lies matrix perturbations of the form 
\begin{subequations}
	\label{eq:perturb}	
	\begin{align}
	\label{eq:E}
	\tHH = \HH + \EE,
	\end{align}
	for some symmetric matrix $ \EE $. The goal is then to obtain a bound 
	\begin{align}
	\label{eq:epsilon}
	\vnorm{\EE} \leq \varepsilon,
	\end{align}
\end{subequations}
such that the algorithm with inexact Hessian has desirable per-iteration costs and yet maintains the iteration complexity of the original exact algorithm. 
Once such a bound on $ \varepsilon $ is established, a variety of approaches including deterministic, e.g., finite difference, and stochastic, e.g., sub-sampling, can be used to form $ \tHH $ in \cref{eq:perturb}. For example, for finite-sum minimization problem \cref{eq:obj_sum}, \cite[Lemma 16]{xuNonconvexTheoretical2017} established that if $|\mathcal{S}| \in \mathcal{O}\left(\varepsilon^{-2} \log \left( 2d/ \delta\right) \right)$ and samples are drawn uniformly at random, then we have $ \Pr \left(\| \HH - \tHH \| \leq \varepsilon \right) \geq 1 - \delta$. Bounds using non-uniform sampling have also been given, e.g., \cite{xuNonconvexTheoretical2017,xu2016sub}.

In strongly-convex settings, establishing the convergence of the classical Newton's method and its Newton-CG variant relies on Hessian perturbations that are \emph{approximately spectrum preserving}, e.g., $ \varepsilon $ in \cref{eq:perturb} must be such that for the perturbed Hessian, we have
\begin{align}
\label{eq:spectrum}
(1-\tilde{\varepsilon}_{1})\HH \preceq \tHH \preceq (1+\tilde{\varepsilon}_{1})\HH,
\end{align}
where $ \tilde{\varepsilon}_{1} \in \mathcal{O}(\varepsilon) $ , e.g., \cite{erdogdu2015convergence,pilanci2015newton,ssn2018,bollapragada2016exact}. Under \cref{eq:spectrum}, the perturbed matrix is not only required to be full-rank, but also it must remain positive definite. In particular, \cref{eq:spectrum} implies that
\begin{align*}
(1-\tilde{\varepsilon}_{2})\HH^{-1} \preceq \tHH^{-1} \preceq (1+\tilde{\varepsilon}_{1}) \HH^{-1},
\end{align*}
for some $ \tilde{\varepsilon}_{2} \in \mathcal{O}(\varepsilon) $, which in turn gives
\begin{align}
\label{eq:inverse}
\vnorm{\HH^{-1} - \tHH^{-1}} \leq \tilde{\varepsilon}_{3},
\end{align}
for some $ \tilde{\varepsilon}_{3} \in \mathcal{O}(\varepsilon) $ \cite[Theorem 2.5]{stewart1990matrix}, i.e., the inverse of the perturbed Hessian is itself a small perturbation of the inverse of the true Hessian.
%\begin{align*}
%\frac{1}{(1+\tilde{\varepsilon})} \dotprod{\bgg, \HH^{-1}\bgg} \preceq \dotprod{\bgg, \tHH^{-1}\bgg} \preceq \frac{1}{(1-\tilde{\varepsilon})} \dotprod{\bgg, \HH^{-1}\bgg},
%\end{align*}
%i.e., the Newton decrement \cite{boyd2004convex}, which plays important role in the analysis of Newton's method and can also serve as an approximate upper bound on the sub-optimality gap, is well preserved.

In non-convex settings, however, where the true Hessian might be indefinite and/or rank deficient, requiring such conditions is simply infeasible. Indeed, when $ \HH $ is indefinite, the inequality \cref{eq:spectrum} ceases to be meaningful, i.e., no value of $ \tilde{\varepsilon} > 0 $ can give $ (1-\tilde{\varepsilon})\HH \preceq (1+\tilde{\varepsilon})\HH $. Further, when $ \HH $ has a zero eigenvalue, i.e., it is singular, it is practically impossible to assume that the corresponding eigenvalue of $ \tHH $ is also zero. In other words, in non-convex settings, no amount of perturbation in \cref{eq:perturb} will be guaranteed to be spectrum preserving.
In such settings, where the Hessian can simply fail to be invertible, one might be tempted to find a similar bound as in \cref{eq:inverse} but in terms of matrix pseudo-inverse, i.e., to find values of $ \varepsilon $ in \cref{eq:perturb} such that 
\begin{align}
\label{eq:pseudo_inverse}
\vnorm{\HHd - \tHHd} \leq  \tilde{\varepsilon}_{3}.
\end{align}
However, requiring \cref{eq:pseudo_inverse} is also extremely restrictive. In fact, it is known that the necessary and sufficient condition for $ \tHHd \rightarrow \HHd $ as $ \tHH \rightarrow \HH $, i.e., as $ \varepsilon \downarrow 0 $, is that $ \tHH $ is an \emph{acute perturbation} of $ \HH $, i.e., $ \rank(\tHH) = \rank(\HH) $ \cite[p.\ 146]{stewart1990matrix}. 
A cornerstone in the theory of matrix perturbations is establishing conditions on $ \tHH $ and $ \varepsilon $ in \cref{eq:perturb} that can give results in the same spirit as \cref{eq:pseudo_inverse} for more general perturbations, e.g., \cite{deng2010perturbation, meng2010optimal, stewart1990matrix}. 
For example, from \cite[Theorem 3.8]{stewart1990matrix} and \cref{eq:perturb}, we have 
\begin{align}
\label{eq:stewart}
\vnorm{\HHd - \tHHd} \leq \left(\frac{1 + \sqrt{5}}{2}\right) \max \left\{\vnorm{\HHd}^2, \vnorm{\tHHd}^2\right\} \varepsilon.
\end{align}
Employing \cref{eq:stewart} to guarantee \cref{eq:pseudo_inverse} necessarily relies on assuming $ \|\tHHd\| \in o(1/\sqrt{\varepsilon}) $, where ``$ o(.) $'' denotes the ``Little-O Notation'', i.e., $\|\tHHd\|$ must grow at a slower rate than $ 1/\sqrt{\varepsilon} $. However, as demonstrated by the following examples, this assumption is easily violated in many situations. 
%Inspired by \cite{stewart1979note,stewart1984rank,stewart1984second,stewart1990matrix,stewart1998perturbation,stewart1977perturbation,bernstein2009matrix}, we present examples demonstrating this. 

\begin{example}[Deterministic Perturbations]
	\label{example:det}
	\vspace*{-2mm}
	Suppose $ \vnorm{\EE} = \varepsilon $ and that the ratio of the largest over the smallest non-zero singular values of $ \EE $ is bounded by some constant, say $ C $. This, in turn, implies $ \sigma_{r_{\EE}}(\EE) \geq \varepsilon/C $, where $ \sigma_{r_{\EE}}(\EE) $ is the smallest non-zero singular value of $ \EE $. Further, suppose that $ r \triangleq \rank(\HH) \leq \rank(\tHH) \triangleq  \tr$ (cf.\ \cref{lemma:rank}), and $r_{\EE} = \tr + r $. By \cite[Proposition 9.6.8]{bernstein2009matrix}, we have
	\begin{align*}
	\sigma_{\tr}(\tHH) \geq \sigma_{r_{\EE}}(\EE) - \sigma_{r+1}(\HH) \geq {\varepsilon}/{C},
	\end{align*}
	which gives $\|\tHHd\| \in \mathcal{O}(1/\varepsilon)$.
\end{example}

\begin{example}[Random Perturbations]
	\label{example:rand}
	\vspace*{-2mm}
	Suppose $ [\EE]_{ij} \sim \mathcal{N}(0, \varepsilon^{2})$, where $ i,j = 1, \ldots, d $ and $ \mathcal{N}(0, \varepsilon^{2}) $ denotes the standard normal distribution with mean zero and standard deviation $ \varepsilon $. 
	%From the main result in \cite{latala2005some}, it follows that 
	%\begin{align*}
	%\Ex \left[\vnorm{\EE}\right] &\leq C_1\left(\max_i \sqrt{\sum_{j} \mathbb{E} \left[\EE^2_{ij}\right]} + \max_j \sqrt{\sum_{i} \mathbb{E} \left[\EE^2_{ij}\right]} + \sqrt[4]{\sum_{ij} \mathbb{E} \left[\EE^4_{ij}\right]}\right) \\
	%&= 2 C_1 \sqrt{d \varepsilon^2} + C_1 \sqrt[4]{3 d^2 \varepsilon^4}\\
	%&= C \varepsilon,
	%\end{align*}
	%where $ C_1$ and $C $ are some universal constants independent of $ \varepsilon $.
	We have $ \sigma_{i}(\HH) = 0, \; d \geq i > r \defeq \rank(\HH)$. Suppose further that the non-zero singular values of $ \HH $ are well separated from zero, e.g., $ \sigma_{i}(\HH) > 5 \varepsilon, \; i = 1,\ldots,r$. Using results similar to \cite[p.\ 411]{stewart1984rank}, one can show that the diagonal entries of $ \tSigma_{2} $ in \cref{eq:svd}, i.e., the non-zero singular values of $ \tHH $ corresponding to zero singular values of $ \HH $, will satisfy
	\begin{align*}
	\mathbb{E}\left[ \sigma^2_{i}(\tHH) \right] = ( d - r ) \varepsilon^{2} \quad  \text{ and } \quad \sigma_{i}(\tHH) \leq \sqrt{2} \vnorm{\EE}, \; i = r+1,\ldots,d.
	\end{align*}
	With probability $ 1-2 \exp(-2d) $, we have $ \vnorm{\EE} \leq  4 \sqrt{d} \varepsilon$ \cite[Eqn (2.3), p.\ 1582]{rudelson2010non}. 
	The latter two inequalities alone indicate that assuming $ \sigma_{i}(\tHH) \in \Omega(\sqrt{\varepsilon})$, and hence the stronger condition $ \|\tHHd\| \in o(1/\sqrt{\varepsilon}) $, is rather quite unreasonable.
	Now, on this latter event, for any $ C > 1 $, the reverse Markov inequality gives
	%\begin{align*}
	%\Pr\left( \sigma_{i}(\tHH) > \frac{\varepsilon}{C} \right) &= \Pr\left( \sigma^{2}_{i}(\tHH) > \frac{\varepsilon^{2}}{C^{2}} \right) \\
	%&\geq \mathbb{E}\left[ \frac{\sigma^{2}_{i}(\tHH) - \varepsilon^{2}/C^{2}}{\sigma^{2}_{i}(\tHH)} \right] \\
	%&\geq \mathbb{E}\left[ \frac{\sigma^{2}_{i}(\tHH) - \varepsilon^{2}/C^{2}}{32 d \varepsilon^{2}} \right] \\
	%&\geq \frac{( d - r ) \varepsilon^{2} - \varepsilon^{2}/C^{2}}{32 d \varepsilon^{2}}\\
	%&\geq \frac{d - r  - 1/C^{2}}{32 d}.
	%\end{align*}
	\begin{align*}
	\Pr\left( \sigma_{i}(\tHH) \leq \frac{\varepsilon}{C} \right) \leq \frac{\mathbb{E}\left[32 d \varepsilon^{2} - \sigma^{2}_{i}(\tHH) \right]}{32 d \varepsilon^{2} - \varepsilon^{2}/C^{2}} \leq \frac{32 d - ( d - r ) }{32 d - 1/C^{2}},
	\end{align*}
	which implies
	\begin{align*}
	\Pr\left( \sigma_{i}(\tHH) > \frac{\varepsilon}{C} \right) & \geq \frac{( d - r ) - 1/C^{2} }{32 d - 1/C^{2}}.
	\end{align*}
	In other words, with a positive probability that is independent of $ \varepsilon $, we have $ \|\tHHd\| \in \mathcal{O}(1/\varepsilon)$.
\end{example}
In light of \cref{example:det,example:rand}, a more sensible noise model is the one which allows for $\|\tHHd\|$ to grow at the same rate as $ 1/\varepsilon $.
\begin{assumption}[Perturbation Model]
	\label{assmpt:perturb}
	\vspace*{-2mm}
	For the perturbation \cref{eq:perturb}, we have 
	\begin{align}
	\label{eq:epsilon_02} 
	\vnorm{\tHHd} = \vnorm{\left[\HH + \EE\right]^{\dagger}} \leq \frac{C}{\varepsilon},
	\end{align}
	where $ \varepsilon $ is as in \cref{eq:perturb} and $ C \geq 1 $ is some universal constant. 
\end{assumption}
Under \cref{assmpt:perturb}, unless the perturbations are acute, i.e., rank preserving, obtaining \cref{eq:pseudo_inverse} is simply hopeless. In this light, instead of considering the distance between $ \HHd $ and $ \tHHd $ viewed as linear operators, one can perhaps consider the distance between the subspaces spanned by them respectively. More specifically, instead of \cref{eq:pseudo_inverse},  one could attempt at finding conditions in \cref{eq:perturb} such that 
\begin{align}
\label{eq:projection}
\vnorm{\UU \UUt - \tUU \tUUt} \leq  \tilde{\varepsilon}_{3},
\end{align}
where $ \UU $ and $ \tUU $ are orthonormal bases for $ \range(\tHH) $ and $ \range(\tHH) $, respectively. In other words, at first sight, the quantity of interest could be the distance  between  the two subspaces, namely $ \range(\HH) $ and its perturbation $ \range(\tHH) $ \cite[Section 2.5.3]{golub2012matrix}. More recent and improved results in bounding such distance are given in \cite{o2018random}. For simplicity we include the statement of \cite[Theorem 19]{o2018random}, but only slightly modified to fit the settings that we consider here.
\begin{theorem}[{Modified Davis-Kahan-Wedin Sine Theorem \cite[Theorem 19]{o2018random}}]
	\label{thm:proj}
	\vspace*{-2mm}
	Consider a symmetric matrix $ \AA \in \reals^{d \times d} $ of rank $ r $, and let $ \tAA $ be its symmetric perturbation. For an integer $ 1 \leq j \leq r $, let $ \UU_j \defeq \left[\uu_1,\dots,\uu_j\right] \in \reals^{d \times j} $ and $ \tUU_j \defeq \left[ \tuu_1,\dots,\tuu_j \right]  \in \reals^{d \times j} $, where $ \uu_{i} \in \reals^{d} $ and $ \tuu_{i} \in \reals^{d} $ are, respectively, $ i\th $ eigenvectors of matrices $ \AA $ and $ \tAA $. The principal angle between $ \range(\UU_j) $ and $ \range(\tUU_j) $ is given by 
	\begin{align}
	\label{eq:davis_khan}
	\sin \angle \left( \range(\UU), \range(\tUU)\right) = \vnorm{\UU_j \UU_{j}^{\intercal} - \tUU_j \tUU_{j}^{\intercal}} \leq \frac{2 \|\AA - \tAA\|}{\sigma_{j}(\AA) - \sigma_{j+1}(\AA)}.
	\end{align}
\end{theorem}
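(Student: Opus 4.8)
The plan is to prove the bound through the classical Davis--Kahan machinery, reducing the projection distance to the norm of a single off-diagonal block and controlling that block via a Sylvester equation. Write $\PP \defeq \UU_j \UU_j^\intercal$ and $\tPP \defeq \tUU_j \tUU_j^\intercal$ for the orthogonal projectors onto the dominant $j$-dimensional singular subspaces of $\AA$ and $\tAA$; since both matrices are symmetric, these coincide with the spans of the $j$ eigenvectors of largest modulus, so that $\range(\PP)$ and $\range(\tPP)$ are genuine invariant subspaces. First I would record the standard fact that for two orthogonal projectors of equal rank, $\vnorm{\PP - \tPP} = \vnorm{\PP_\perp \tPP}$ with $\PP_\perp \defeq \eye - \PP$, which turns the target quantity into the norm of the single block $\mathbf{S} \defeq \PP_\perp \tPP$. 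It is convenient to dispose of the easy regime immediately: if $\vnorm{\EE} \ge (\sigma_j(\AA) - \sigma_{j+1}(\AA))/2$, then the right-hand side of the claimed inequality is at least $1$ while $\vnorm{\PP - \tPP} \le 1$ always, so the bound holds trivially. Thus it suffices to treat $\vnorm{\EE} < (\sigma_j(\AA) - \sigma_{j+1}(\AA))/2$, which via Weyl's inequality also guarantees $\sigma_j(\tAA) > \sigma_{j+1}(\tAA)$, so that $\tPP$ is itself well defined.

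The heart of the argument is a Sylvester identity. Using that $\range(\PP)$ is $\AA$-invariant and $\range(\tPP)$ is $\tAA$-invariant, the cross terms $\PP_\perp \AA \PP$ and $\tPP_\perp \tAA \tPP$ vanish, and a short computation with $\EE = \tAA - \AA$ gives
\[
\BB_2 \mathbf{S} - \mathbf{S}\, \tBB_1 = -\,\PP_\perp \EE \tPP,
\]
where $\BB_2 \defeq \PP_\perp \AA \PP_\perp$ and $\tBB_1 \defeq \tPP \tAA \tPP$ are the compressions to the relevant subspaces, and $\mathbf{S}$ is viewed as a map $\range(\tPP) \to \range(\PP_\perp)$. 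The eigenvalues of $\BB_2$ restricted to $\range(\PP_\perp)$ are exactly the $d-j$ eigenvalues of $\AA$ of smallest modulus, each bounded by $\sigma_{j+1}(\AA)$ in absolute value, whereas the eigenvalues of $\tBB_1$ restricted to $\range(\tPP)$ are the $j$ eigenvalues of $\tAA$ of largest modulus, each at least $\sigma_j(\tAA) \ge \sigma_j(\AA) - \vnorm{\EE}$ by Weyl. Since these two spectra lie in disjoint symmetric annuli, their separation satisfies $\delta \ge \sigma_j(\AA) - \sigma_{j+1}(\AA) - \vnorm{\EE}$, hence $\delta \ge (\sigma_j(\AA) - \sigma_{j+1}(\AA))/2$ in the regime above.

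Finally I would invoke the spectral-norm bound for Sylvester equations with Hermitian coefficients: if $\BB_2$ and $\tBB_1$ are Hermitian with spectral separation $\delta$, then $\vnorm{\mathbf{S}} \le \vnorm{\PP_\perp \EE \tPP}/\delta \le \vnorm{\EE}/\delta$. Combining with the gap estimate yields $\vnorm{\PP - \tPP} = \vnorm{\mathbf{S}} \le 2\vnorm{\EE}/(\sigma_j(\AA) - \sigma_{j+1}(\AA))$, which is the claim. The step I would be most careful about is twofold. The first is justifying the separation inequality in the operator norm rather than merely the Frobenius norm: the Sylvester operator diagonalizes immediately in the Hilbert--Schmidt inner product, but the spectral-norm statement for normal coefficient matrices is the genuine input (the Davis--Kahan--Bhatia--Rosenthal separation theorem). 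The second is tracking signs for indefinite $\AA$, so that the gap emerging from the disjoint annuli is the \emph{singular-value} gap $\sigma_j - \sigma_{j+1}$ (with the tell-tale factor $2$ produced by the case split) rather than a signed eigenvalue gap that could be spuriously small when eigenvalues of opposite sign have nearby moduli; this is also where the reading of $\UU_j$ as the dominant singular subspace, consistent with the stated bound, must be made precise.
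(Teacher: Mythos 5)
Your proposal is correct, but there is nothing in the paper to compare it against: \cref{thm:proj} is not proved in this paper at all --- it is quoted, with light notational adaptation, from \cite[Theorem 19]{o2018random}, and the text preceding it says exactly that. So what you have produced is a self-contained reconstruction of the cited result, and the reconstruction is sound. The reduction $\vnorm{\UU_j\UU_j^\intercal - \tUU_j\tUU_j^\intercal} = \vnorm{(\eye - \UU_j\UU_j^\intercal)\,\tUU_j\tUU_j^\intercal}$ is legitimate because both projectors have rank $j$; the Sylvester identity holds exactly as you state, since $\AA$-invariance of $\range(\UU_j)$ gives $\PP_\perp\AA\PP_\perp\tPP = \PP_\perp\AA\tPP$ and $\tAA$-invariance of the perturbed subspace gives $\PP_\perp\tPP\tAA\tPP = \PP_\perp\tAA\tPP$ (in your notation $\PP_\perp = \eye - \UU_j\UU_j^\intercal$, $\tPP = \tUU_j\tUU_j^\intercal$); and, crucially, your ``disjoint symmetric annuli'' configuration is precisely the interval-type separation hypothesis of the Davis--Kahan $\sin\Theta$ theorem: the compression of $\AA$ to $\range(\PP_\perp)$ has spectrum inside $[-\sigma_{j+1}(\AA),\sigma_{j+1}(\AA)]$, while the compression of $\tAA$ to $\range(\tPP)$ has spectrum outside $(-\sigma_j(\tAA),\sigma_j(\tAA))$, i.e., outside that same interval enlarged on both sides by $\delta = \sigma_j(\tAA)-\sigma_{j+1}(\AA)$. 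This precision matters: your intermediate phrasing ``Hermitian with spectral separation $\delta$'' would by itself only yield the constant $\pi/2$ of the Bhatia--Davis--McIntosh theorem, whereas the constant $1$ you invoke requires the interval configuration --- which your setup does satisfy, so the application is valid, but a final write-up should say so explicitly. Weyl's inequality then gives $\delta \geq \sigma_j(\AA)-\sigma_{j+1}(\AA)-\vnorm{\EE}$, and your case split on whether $\vnorm{\EE}$ exceeds half the gap produces the factor $2$, the large-perturbation case being trivial since the difference of two orthogonal projectors has norm at most $1$. Your route is essentially the standard architecture by which such bounds (including the cited one) are established --- a classical $\sin\Theta$ theorem plus Weyl plus the same case split --- except that you go one level deeper and re-derive the $\sin\Theta$ input from the Sylvester-equation separation theorem rather than citing Wedin. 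Finally, your reading of $\uu_i,\tuu_i$ as eigenvectors ordered by eigenvalue \emph{modulus} (i.e., singular vectors) is indeed the intended one; with a signed-eigenvalue ordering the denominator $\sigma_j(\AA)-\sigma_{j+1}(\AA)$ would be meaningless for indefinite $\AA$, and your annuli argument is exactly what handles that case correctly.
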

%Recall Davis-Kahan theorem
%	\begin{align*}
%	\vnorm{\UU \UUt - \tUU \tUUt} \leq \frac{\|\AA - \tAA\|}{\sigma_{j} - \sigma_{j+1}}
%	\end{align*}
%	Then by \cref{eq:sigma_tU1},
%	\begin{align*}
%	\vnorm{\UU \UUt - \tUU \tUUt} \leq \frac{\varepsilon}{\gamma - \varepsilon}
%	\end{align*}
%	When $\varepsilon < \gamma / 2$, 
%	\begin{align*}
%	\frac{\varepsilon}{\gamma - \varepsilon} \leq \frac{2 \varepsilon}{\gamma}.
%	\end{align*}

As a result, if $ \tHH $ is an acute perturbation of $ \HH $, i.e., $ \rank(\tHH) = \rank(\HH) = r $, we can appeal to \cref{thm:proj} and obtain a bound as in \cref{eq:projection}. Indeed, since $ \sigma_{r+1}(\HH) = 0 $ in this case, we get 
\begin{align*}
\vnorm{\UU \UUt - \tUU \tUUt} \leq \frac{2\vnorm{\HH - \tHH}}{\sigma_{r}(\HH)} = 2 \vnorm{\HH - \tHH} \vnorm{\HHd} \leq 2 \vnorm{\HHd}  \varepsilon.
\end{align*} 
However, from \cite[Facts 5.12.17(iv) and 9.9.29]{bernstein2009matrix}, it simply follows that 
\begin{align*}
\rank(\HH) \neq \rank(\tHH) \Longrightarrow \vnorm{\tUU \tUUt - \UU \UUt} = 1.
\end{align*}
Again as before, requiring \cref{eq:projection} in non-convex settings is indeed far too stringent. 

What comes to the rescue is the observation that instead of obtaining a bound as in \cref{eq:projection}, which implies a bounded distance between $ \UU \UUt $ and $ \tUU \tUUt $ along \emph{every} direction, for Newton-MR, we only need such distance to be bounded along a \emph{specific} direction, i.e., that of the gradient $ \bgg $. Indeed, instead of \cref{eq:projection}, which implies 
\begin{align*}
\vnorm{\left( \HH \HHd - \tHH \tHHd \right) \vv} \leq \teps \vnorm{\vv}, \quad \forall \; \vv \in \reals^{d},
\end{align*}
by only considering $ \vv = \bgg $ in the above, we seek to bound only the projection of the gradient on the range space of the Hessian matrices as
\begin{align}
\label{eq:pseudo_inverse_g}
\vnorm{\left( \HH \HHd - \tHH \tHHd \right) \bgg} \leq \teps \vnorm{\bgg}.
\end{align}

We now set out to obtain conditions that can guarantee \cref{eq:pseudo_inverse_g}. Our result relies on the following lemma (\cref{lemma:rank}), which establishes a relationship between $ \varepsilon $ in \cref{eq:perturb} and $ \rank(\tHH) $.
Although assuming rank-preserving perturbation, i.e., $ \rank(\HHk) = \rank(\tHHk) $, is too stringent to be of any practical use, under certain conditions, we can ensure that the perturbed matrix has a rank at least as large as the original matrix, i.e., perturbation is such that the rank is, at least, not reduced.
\begin{lemma}[Rank of Perturbed Hessian]
	\label{lemma:rank}
	\vspace*{-2mm}
	Under \cref{assmpt:pseudo_regularity}, if $ \varepsilon < \gamma $ in \cref{eq:perturb}, then $ \rank(\tHH) \geq \rank(\HH) $.
\end{lemma}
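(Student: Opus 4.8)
The plan is to reduce the claim to a standard singular-value perturbation bound. First I would set $ r \defeq \rank(\HH) $ and observe that \cref{assmpt:pseudo_regularity}, in its equivalent form \cref{eq:pseudo_regularity_H}, pins down the smallest \emph{nonzero} singular value of $ \HH $: since $ \vnorm{\HHd} \leq 1/\gamma $ and $ \vnorm{\HHd} = 1/\sigma_{r}(\HH) $ (the spectral norm of the pseudo-inverse being the reciprocal of the smallest nonzero singular value of $ \HH $), we obtain $ \sigma_{r}(\HH) \geq \gamma $, where $ \sigma_{r}(\HH) $ is precisely the $ r\th $ largest, hence smallest nonzero, singular value of $ \HH $.

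Next I would invoke Weyl's perturbation inequality for singular values applied to $ \tHH = \HH + \EE $, namely $ |\sigma_{i}(\tHH) - \sigma_{i}(\HH)| \leq \vnorm{\EE} $ for every $ i $. Taking $ i = r $ and using $ \vnorm{\EE} \leq \varepsilon $ gives
\begin{align*}
\sigma_{r}(\tHH) \geq \sigma_{r}(\HH) - \vnorm{\EE} \geq \gamma - \varepsilon.
\end{align*}
Finally, the hypothesis $ \varepsilon < \gamma $ forces $ \sigma_{r}(\tHH) \geq \gamma - \varepsilon > 0 $. Thus $ \tHH $ has at least $ r $ strictly positive singular values, whence $ \rank(\tHH) \geq r = \rank(\HH) $, as desired.

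The argument is short, and there is no genuinely hard step; the only point requiring care is the first one, namely correctly translating the pseudo-inverse regularity assumption (phrased via $ \range(\HH) $ and the action of $ \HH $ on it) into the clean spectral statement $ \sigma_{r}(\HH) \geq \gamma $, and being careful that the index $ r $ corresponds to the smallest nonzero singular value rather than to $ \sigma_{d} $. Everything else is a direct application of the classical Weyl bound, which is why no structural hypotheses on $ \EE $ beyond $ \vnorm{\EE} \leq \varepsilon $ and symmetry (so that $ \tHH $ is symmetric, making its singular values the moduli of its eigenvalues) are needed.
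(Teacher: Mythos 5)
Your proof is correct, and while it follows the same overall strategy as the paper (a Weyl-type spectral perturbation bound showing the nonzero part of the spectrum cannot be pushed to zero when $\varepsilon < \gamma$), it rests on a different key inequality. The paper argues through eigenvalues: it applies Weyl's inequality for symmetric matrices to get $\lambda_j(\HH) - \varepsilon \leq \lambda_j(\tHH) \leq \lambda_j(\HH) + \varepsilon$ (its \cref{eq:wyle}), and then runs a two-case sign analysis on each nonzero eigenvalue of $\HH$ (if $\lambda_j(\HH) \geq \gamma$ then $\lambda_j(\tHH) \geq \gamma - \varepsilon > 0$; if $\lambda_j(\HH) \leq -\gamma$ then $\lambda_j(\tHH) \leq -\gamma + \varepsilon < 0$). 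You instead use Weyl's inequality for singular values, $\abs{\sigma_i(\tHH) - \sigma_i(\HH)} \leq \vnorm{\EE}$, combined with the translation $\sigma_r(\HH) = 1/\vnorm{\HHd} \geq \gamma$ of \cref{assmpt:pseudo_regularity}, which the paper has already recorded in the equivalent form \cref{eq:pseudo_regularity_H}. This buys two things. First, since singular values are nonnegative and sorted by magnitude, the sign case analysis disappears, and the nonzero singular values of $\HH$ really are $\sigma_1,\ldots,\sigma_r$; this quietly sidesteps an indexing sloppiness in the paper's proof, where, for an indefinite $\HH$ with eigenvalues ordered $\lambda_1 \geq \cdots \geq \lambda_d$, the nonzero eigenvalues need not occupy positions $1 \leq j \leq r$ (negative ones sit at the bottom indices), so the paper's claim $\abs{\lambda_j(\HH)} \geq \gamma$ for $1 \leq j \leq r$ should properly range over the set of indices carrying nonzero eigenvalues. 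Second, Weyl's inequality for singular values holds for arbitrary (even nonsymmetric) perturbations, so the symmetry remark at the end of your write-up is not actually needed, making your argument marginally more general. What the paper's eigenvalue formulation buys in exchange is that its displayed bound \cref{eq:wyle} is reused immediately after the lemma to derive \cref{eq:sigma_tU1,eq:sigma_tU2}; your singular-value bound would serve that purpose just as directly.
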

\begin{proof} 
	\label{pf:rank}
	First note that from \cref{eq:perturb}, it follows that $\lambda_{\min}(\tHH - \HH) \geq -\varepsilon$, and $\lambda_{\max}(\tHH - \HH) \leq \varepsilon$. Let $ r = \rank(\HH) $ and $ \tr = \rank(\tHH) $. By \cite[Theorem 8.4.11]{bernstein2009matrix}, for any $ 1 \leq j \leq r $, we have
	\begin{align*}
	\lambda_j(\HH) + \lambda_{\min}(\tHH - \HH) \leq \lambda_j(\tHH) \leq \lambda_j(\HH) + \lambda_{\max}(\tHH - \HH), 
	\end{align*}
	which implies
	\begin{align}
	\label{eq:wyle}
	\lambda_j(\HH) - \varepsilon &\leq \lambda_j(\tHH) \leq \lambda_j(\HH) + \varepsilon.
	\end{align}
	Now \cref{assmpt:pseudo_regularity} implies that $ \abs{\lambda_j(\HH)} \geq \gamma$, $1 \leq j \leq r $. Hence, we get 
	\begin{align*}
	\lambda_j(\tHH) &\leq \lambda_j(\HH) + \varepsilon \leq -\gamma + \varepsilon < 0, \quad \text{if} \quad \lambda_j(\HH) < 0, \\
	\lambda_j(\tHH) &\geq \lambda_j(\HH) - \varepsilon \geq \gamma  - \varepsilon > 0, \quad \text{if} \quad \lambda_j(\HH) > 0.
	\end{align*}
	Thus, it follows that $ \lambda_j(\tHH) \neq 0, \forall j = 1,\dots,r $, which implies that $ \rank(\tHH) \geq \rank(\HH) $.
\end{proof}

\Cref{lemma:rank} states that if $ \varepsilon < \gamma $, the perturbed Hessian is never of lower rank than the original Hessian. In other words, if $ \rank(\tHH) < \rank(\HH) $, then we must necessarily have that $ \varepsilon \geq \gamma $. %\Cref{lemma:rank} is also the reason of defining SVDs in \Cref{sec:notation}.
Also, from the proof of \cref{lemma:rank} and using the fact that $ \tHH $ is symmetric, we have
\begin{align}
\label{eq:sigma_tU1}
\sigma_{i} ( \tHH ) \geq \gamma - \varepsilon, \quad i = 1,\ldots, r,
\end{align}
where $ \gamma  $ is as in \cref{assmpt:pseudo_regularity}. Furthermore, if $ \varepsilon < \gamma $, we have $ r < \tr $, which by \cref{eq:wyle,eq:epsilon_02} yields
\begin{align}
\label{eq:sigma_tU2}
\frac{\varepsilon}{C} \leq \sigma_{i} ( \tHH ) \leq \varepsilon, \quad i = r+1,\ldots, \tr.
\end{align}

For the remainder of this paper, with $ r \leq \tr $, we let the singular value decomposition of $\HH$ and $\tHH$ be
\begin{align}
\label{eq:svd}
\HH &= \begin{bmatrix}
\UU &\UU_{\perp}
\end{bmatrix} \begin{bmatrix}
\Sigma & 0\\
0 & 0\\
\end{bmatrix} \VVt,\quad \text{and} \quad \tHH = \begin{bmatrix}
\undermat{\tUU}{\tUU_1 & \tUU_2} &\tUU_{\perp}
\end{bmatrix} \begin{bmatrix}
\tSigma_{1} & 0 & 0\\
0 & \tSigma_{2} & 0\\
0 & 0 & 0\\
\end{bmatrix} \tVVt,
\end{align}
where
\begin{align*}
&\UU \in \reals^{d \times r}, \Sigma \in \reals^{r \times r}, \VV \in \reals^{d \times d}, \\
&\tUU_{1} \in \reals^{d \times r}, \tUU_{2} \in \reals^{d \times (\tr-r)}, \tSigma_{1} \in \reals^{r \times r}, \tSigma_{2} \in \reals^{(\tr-r) \times (\tr-r)}, \tVV \in \reals^{d \times d}.
\end{align*}
Note that $ \tUU \in \reals^{d \times \tr} $ is divided into $ \tUU_{1} \in \reals^{d \times r} $ and $ \tUU_{2} \in \reals^{d \times (\tr-r)} $. Furthermore,  $ \UU $ and $\tUU_{1} $ have the same rank. Now, using \Cref{lemma:rank}, we can establish \cref{eq:pseudo_inverse_g}.
\begin{theorem}
	\label{thm:sub_proj}
	\vspace*{-2mm}
	Under \cref{assmpt:pseudo_regularity,assmpt:null_space}, and with $ \varepsilon < \gamma $ in \cref{eq:perturb}, we have \cref{eq:pseudo_inverse_g} with 
	\begin{align*}
	\teps \triangleq \frac{4 \varepsilon}{\gamma} + \sqrt{1 - \nu}.
	\end{align*}
	Furthermore, in special case of acute perturbation, i.e., $ \rank(\HH) = \rank(\tHH) $, we have
	\begin{align*}
	\teps \triangleq \frac{2 \varepsilon}{\gamma}.
	\end{align*}
	Here, $ \gamma $ and $ \nu $ are as in \cref{assmpt:pseudo_regularity,assmpt:null_space}, respectively.
\end{theorem}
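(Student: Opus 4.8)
The plan is to recognize that $\HH\HHd$ and $\tHH\tHHd$ are precisely the orthogonal projectors onto $\range(\HH)$ and $\range(\tHH)$, so that in the notation of \cref{eq:svd} we have $\HH\HHd = \UU\UUt$ and $\tHH\tHHd = \tUU\tUUt = \tUU_{1}\tUU_{1}^{\intercal} + \tUU_{2}\tUU_{2}^{\intercal}$. The quantity to control is therefore $\vnorm{(\UU\UUt - \tUU\tUUt)\bgg}$, and the natural first move is to split it, by the triangle inequality, into a term comparing the two matched $r$-dimensional range spaces and a term measuring the excess directions present only in $\tHH$:
\begin{align*}
\vnorm{(\UU\UUt - \tUU\tUUt)\bgg} \leq \vnorm{(\UU\UUt - \tUU_{1}\tUU_{1}^{\intercal})\bgg} + \vnorm{\tUU_{2}\tUU_{2}^{\intercal}\bgg}.
\end{align*}
By \cref{lemma:rank}, the hypothesis $\varepsilon < \gamma$ guarantees $\tr \geq r$, so this split is well defined, with $\UU$ and $\tUU_{1}$ of equal width $r$.

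For the first term I would invoke the Davis--Kahan--Wedin bound of \cref{thm:proj} with $\AA = \HH$, $\tAA = \tHH$ and $j = r$, noting that the top-$r$ singular subspace of $\tHH$ is $\range(\tUU_{1})$ by the separation in \cref{eq:sigma_tU1,eq:sigma_tU2}. Since $\rank(\HH) = r$ forces $\sigma_{r+1}(\HH) = 0$ while \cref{assmpt:pseudo_regularity} yields $\sigma_{r}(\HH) \geq \gamma$, and $\vnorm{\HH - \tHH} = \vnorm{\EE} \leq \varepsilon$, the theorem gives $\vnorm{\UU\UUt - \tUU_{1}\tUU_{1}^{\intercal}} \leq 2\varepsilon/\gamma$, so the first term is at most $(2\varepsilon/\gamma)\vnorm{\bgg}$. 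In the acute case $\tr = r$ the block $\tUU_{2}$ is empty and the second term disappears, which already establishes $\teps = 2\varepsilon/\gamma$.

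The crux is the second term $\vnorm{\tUU_{2}\tUU_{2}^{\intercal}\bgg} = \vnorm{\tUU_{2}^{\intercal}\bgg}$, and here the key observation is that $\range(\tUU_{2})$ is \emph{nearly null} for $\HH$. For any unit $\uu \in \range(\tUU_{2})$, the corresponding eigenvalues of $\tHH$ have magnitude at most $\varepsilon$ by \cref{eq:sigma_tU2}, so $\vnorm{\tHH\uu} \leq \varepsilon$; writing $\HH\uu = \tHH\uu - \EE\uu$ gives $\vnorm{\HH\uu} \leq 2\varepsilon$, while $\HH\uu = \HH\UU\UUt\uu$ (as $\HH\UUp = 0$) together with \cref{eq:pseudo_regularity} gives $\vnorm{\HH\uu} \geq \gamma\vnorm{\UUt\uu}$. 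Combining yields $\vnorm{\UUt\uu} \leq 2\varepsilon/\gamma$, hence $\vnorm{\tUU_{2}^{\intercal}\UU} \leq 2\varepsilon/\gamma$. Decomposing $\bgg = \UU\UUt\bgg + \UUp\UUp^{\intercal}\bgg$ and using $\vnorm{\tUU_{2}^{\intercal}\UUp} \leq 1$ alongside the null-space estimate $\vnorm{\UUp^{\intercal}\bgg} \leq \sqrt{1-\nu}\,\vnorm{\bgg}$ from \cref{eq:null_space_grad_2}, I obtain
\begin{align*}
\vnorm{\tUU_{2}^{\intercal}\bgg} \leq \vnorm{\tUU_{2}^{\intercal}\UU}\,\vnorm{\UUt\bgg} + \vnorm{\tUU_{2}^{\intercal}\UUp}\,\vnorm{\UUp^{\intercal}\bgg} \leq \frac{2\varepsilon}{\gamma}\vnorm{\bgg} + \sqrt{1-\nu}\,\vnorm{\bgg}.
\end{align*}
Adding the two contributions gives $\teps = 4\varepsilon/\gamma + \sqrt{1-\nu}$. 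I expect the main obstacle to be this second term: the work is in arguing cleanly that $\range(\tUU_{2})$ is almost orthogonal to $\range(\HH)$ (the $2\varepsilon/\gamma$ bound), so that the Gradient--Hessian null-space property can then be brought to bear on the residual $\UUp$-component of $\bgg$; by contrast the first term is essentially a direct application of \cref{thm:proj}.
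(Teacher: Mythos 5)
Your proof is correct, reaches exactly the paper's constants, and differs from the paper's argument in the one step that actually matters. Both proofs start the same way: split $\vnorm{(\UU\UUt - \tUU\tUUt)\bgg}$ into the matched term $\vnorm{(\UU\UUt - \tUU_1\tUU_1^{\intercal})\bgg}$ plus the excess term $\vnorm{\tUU_2\tUU_2^{\intercal}\bgg}$, and bound the matched term by \cref{thm:proj} with $j=r$, $\sigma_{r+1}(\HH)=0$, giving $2\varepsilon/\gamma$; the acute case, where $\tUU_2$ is empty, is disposed of identically. The divergence is in the excess term. The paper bounds it via the Pythagorean inequality $\vnorm{\tUU_2\tUU_2^{\intercal}\bgg} \leq \vnorm{\tUU_2\tUU_2^{\intercal}\bgg + \tUU_{\perp}\tUU_{\perp}^{\intercal}\bgg}$, rewrites this using the projector identities $\eye - \UUp\UUp^{\intercal} = \UU\UUt$ and $\eye - \tUU_2\tUU_2^{\intercal} - \tUU_{\perp}\tUU_{\perp}^{\intercal} = \tUU_1\tUU_1^{\intercal}$ as $\vnorm{(\UU\UUt - \tUU_1\tUU_1^{\intercal})\bgg + \UUp\UUp^{\intercal}\bgg}$, and then invokes Davis--Kahan a \emph{second} time together with \cref{eq:null_space_grad_2} — which is where the factor $4\varepsilon/\gamma$ comes from. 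You instead prove a fresh operator-norm estimate $\vnorm{\UUt\tUU_2} \leq 2\varepsilon/\gamma$ from first principles: for unit $\uu \in \range(\tUU_2)$, \cref{eq:sigma_tU2} gives $\vnorm{\tHH\uu} \leq \varepsilon$, hence $\vnorm{\HH\uu} \leq 2\varepsilon$, while $\HH\UUp = \zero$ and \cref{eq:pseudo_regularity} give $\vnorm{\HH\uu} \geq \gamma\vnorm{\UUt\uu}$; decomposing $\bgg$ along $\UU$ and $\UUp$ and using \cref{eq:null_space_grad_2} then yields the same $2\varepsilon/\gamma + \sqrt{1-\nu}$ for this term. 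Your route buys two things: it avoids the second application of \cref{thm:proj} (so the perturbation-theory input is used only once, where it is genuinely needed), and it makes the geometry explicit — the spurious directions $\range(\tUU_2)$ lie almost inside $\Null(\HH)$, which is precisely why \cref{assmpt:null_space} must enter and why the $\sqrt{1-\nu}$ term is unavoidable for non-acute perturbations. The paper's route buys brevity at the cost of a less transparent chain of projector manipulations. One shared caveat: both arguments implicitly pair $\UU$ with $\tUU_1$ as the top-$r$ singular subspace of $\tHH$, which relies on the spectral separation in \cref{eq:sigma_tU1,eq:sigma_tU2}; this is inherited from the paper's setup and is not a defect specific to your proof.
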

\begin{proof} 
	By assumption on $ \varepsilon $, \cref{lemma:rank} gives $ r \triangleq \rank(\HH) \leq \rank(\tHH) \triangleq \tr $. By \cref{eq:pseudo_regularity_H}, \cref{eq:null_space_grad_2}, and  \cref{eq:davis_khan}, we will have
	\begin{align*}
	\vnorm{\HH \HHd \bgg - \tHH \tHHd \bgg} &= \vnorm{\UU \UUt \bgg - \tUU \tUUt \bgg} \leq \vnorm{\UU \UU^{\intercal} \bgg - \tUU_1 \tUU_1^{\intercal} \bgg} + \vnorm{\tUU_2 \tUU_{2}^{\intercal} \bgg}\\
	&\leq \vnorm{\UU \UU^{\intercal} \bgg - \tUU_1 \tUU_1^{\intercal} \bgg} + \vnorm{\tUU_2 \tUU_2^{\intercal} \bgg + \tUU_{\perp} \tUU_{\perp}^{\intercal} \bgg}\\
	&= \vnorm{\UU \UU^{\intercal} \bgg - \tUU_1 \tUU_1^{\intercal} \bgg} + \vnorm{\tUU_2 \tUU_2^{\intercal} \bgg + \tUU_{\perp} \tUU_{\perp}^{\intercal} \bgg - \UUp \UU^{\intercal}_{\perp} \bgg + \UUp \UU^{\intercal}_{\perp} \bgg}\\
	&= \vnorm{\UU \UU^{\intercal} \bgg - \tUU_1 \tUU_1^{\intercal} \bgg} + \vnorm{ (\eye - \UUp \UU^{\intercal}_{\perp}) \bgg - (\eye - \tUU_2 \tUU_2^{\intercal} - \tUU_{\perp} \tUU_{\perp}^{\intercal}) \bgg+ \UUp \UU^{\intercal}_{\perp} \bgg} \\
	&\leq \vnorm{\UU \UU^{\intercal} \bgg - \tUU_1 \tUU_1^{\intercal} \bgg} + \vnorm{ (\eye - \UUp \UU^{\intercal}_{\perp}) \bgg - (\eye - \tUU_2 \tUU_2^{\intercal} - \tUU_{\perp} \tUU_{\perp}^{\intercal}) \bgg} + \vnorm{\UUp \UU^{\intercal}_{\perp} \bgg} \\
	&\leq 2\vnorm{\UU \UU^{\intercal} \bgg - \tUU_1 \tUU_1^{\intercal} \bgg} + \vnorm{\UUp \UU^{\intercal}_{\perp} \bgg} \\
	&\leq 4\vnorm{\HH - \tHH}\vnorm{\HHd}\vnorm{\bgg} + \sqrt{1 - \nu} \vnorm{\bgg} \\
	&\leq \left(\frac{4 \varepsilon}{\gamma} + \sqrt{1 - \nu} \right) \vnorm{\bgg}, 
	\end{align*}
	where in the second inequality, we used the Pythagorean theorem as
	\begin{align*}
	\vnorm{\tUU_2 \tUU_2^{\intercal} \bgg + \tUU_{\perp} \tUU_{\perp}^{\intercal} \bgg}^{2} = \vnorm{\tUU_2 \tUU_2^{\intercal} \bgg}^{2} + \vnorm{\tUU_{\perp} \tUU_{\perp}^{\intercal} \bgg}^{2} \geq \vnorm{\tUU_2 \tUU_2^{\intercal} \bgg}^{2},
	\end{align*}
	and for the last equality we use the fact that $\UU \UU^{\intercal} + \UUp \UU^{\intercal}_{\perp} = \eye$, and $\tUU_1 \tUU_1^{\intercal} + \tUU_2 \tUU_2^{\intercal} + \tUU_{\perp} \tUU_{\perp}^{\intercal} = \eye$. The special case of acute perturbation follows simply from the above without the term involving $ \| \tUU_2 \tUU_{2}^{\intercal} \bgg \| $.
\end{proof}

\cref{thm:sub_proj} also allows us to obtain results similar in spirit to \cref{lemma:null_space_grad}.
\begin{lemma}
	\label{lemma:null_space_tH}
	\vspace*{-2mm}
	Under assumptions of \cref{thm:sub_proj}, we have
	\begin{subequations}
		\label{eq:null_space_grad_pert}  
		\begin{align}
		\vnorm{\tUUt \bgg}^{2} &\geq \tnu \vnorm{\bgg}^{2}, \label{eq:null_space_grad_pert_01}  \\
		\vnorm{\tUU_{\perp}^{\intercal} \bgg}^{2} &\leq (1- \tnu) \vnorm{\bgg}^{2}, \label{eq:null_space_grad_pert_02}  
		\end{align}
	\end{subequations}
	where 
	\begin{align*}
	\tnu &\triangleq 2 \nu - 1 - \frac{4 \varepsilon}{\gamma},
	\end{align*}
	and $ 0.5 < \nu \leq 1$, $\varepsilon < \gamma(2\nu - 1)/4 $.
	Furthermore, in the special case of acute perturbation, i.e., $ \rank(\HH) = \rank(\tHH) $, we have \cref{eq:null_space_grad_pert} with 
	\begin{align*}
	\tnu &\triangleq \nu - \frac{2 \varepsilon}{\gamma},
	\end{align*}
	where $ 0 < \nu \leq 1$, $\varepsilon < \gamma \nu /2 $.
	Here, $ \gamma $ is as in \cref{eq:pseudo_regularity},  and $ \tUU , \tUU_{\perp} $ are as in \cref{eq:svd}.
\end{lemma}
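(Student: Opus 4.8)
The plan is to derive both inequalities from a single estimate by exploiting that $\begin{bmatrix} \tUU & \tUU_{\perp}\end{bmatrix}$ is a complete orthonormal basis, so that $\vnorm{\tUUt \bgg}^{2} + \vnorm{\tUU_{\perp}^{\intercal} \bgg}^{2} = \vnorm{\bgg}^{2}$. Thus \cref{eq:null_space_grad_pert_01} and \cref{eq:null_space_grad_pert_02} are equivalent, and it suffices to control how much of $\bgg$ the perturbed null space absorbs. In the general case I would bound $\vnorm{\tUU_{\perp}^{\intercal}\bgg}$ from above and then obtain \cref{eq:null_space_grad_pert_01} by subtraction; in the acute case I would instead bound $\vnorm{\tUUt\bgg}^{2}$ directly, which is cleaner and yields the sharper constant.

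First I would record the key perturbation estimate on the dominant range. Applying \cref{thm:proj} with $\AA = \HH$, $\tAA = \tHH$ and $j = r \triangleq \rank(\HH)$, the top-$r$ singular subspace projector of $\HH$ is exactly $\UU\UUt$ and that of $\tHH$ is $\tUU_1 \tUU_1^{\intercal}$; by \cref{eq:sigma_tU1,eq:sigma_tU2} together with $\varepsilon < \gamma/2$ the block $\tSigma_{1}$ is well separated from $\tSigma_{2}$, so these are indeed the $r$ largest-magnitude eigendirections. Since $\sigma_{r+1}(\HH) = 0$ and $\sigma_{r}(\HH) = 1/\vnorm{\HHd} \geq \gamma$ by \cref{eq:pseudo_regularity_H}, the bound \cref{eq:davis_khan} gives $\vnorm{\UU\UUt - \tUU_1 \tUU_1^{\intercal}} \leq 2\varepsilon/\gamma$.

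For the general case I would then use the two resolutions of the identity $\eye = \UU\UUt + \UUp\UU^{\intercal}_{\perp}$ and $\eye = \tUU_1 \tUU_1^{\intercal} + \tUU_2 \tUU_2^{\intercal} + \tUU_{\perp}\tUU_{\perp}^{\intercal}$ from \cref{eq:svd} to write $\tUU_{\perp}\tUU_{\perp}^{\intercal}\bgg + \tUU_2 \tUU_2^{\intercal}\bgg = \UUp\UU^{\intercal}_{\perp}\bgg + (\UU\UUt - \tUU_1 \tUU_1^{\intercal})\bgg$. The two terms on the left are orthogonal, their ranges lying in $\range(\tUU_{\perp})$ and $\range(\tUU_2)$, so $\vnorm{\tUU_{\perp}^{\intercal}\bgg}^2$ is at most the squared norm of the right-hand side. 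Bounding the right-hand side by the triangle inequality, invoking \cref{eq:null_space_grad_2} for $\vnorm{\UU^{\intercal}_{\perp}\bgg} \le \sqrt{1-\nu}\,\vnorm{\bgg}$ and the Davis--Kahan estimate above for $\vnorm{(\UU\UUt - \tUU_1\tUU_1^{\intercal})\bgg}$, yields $\vnorm{\tUU_{\perp}^{\intercal}\bgg} \le (\sqrt{1-\nu} + 2\varepsilon/\gamma)\vnorm{\bgg}$. Squaring via $(a+b)^2 \le 2a^2 + 2b^2$ and using $\varepsilon < \gamma(2\nu-1)/4 \le \gamma/2$ to absorb the quadratic term $8\varepsilon^2/\gamma^2$ into the linear $4\varepsilon/\gamma$ gives \cref{eq:null_space_grad_pert_02} with $1-\tnu = 2(1-\nu) + 4\varepsilon/\gamma$, i.e.\ $\tnu = 2\nu - 1 - 4\varepsilon/\gamma$, the hypotheses $\nu > 1/2$ and $\varepsilon < \gamma(2\nu-1)/4$ ensuring $\tnu > 0$; then \cref{eq:null_space_grad_pert_01} follows by subtraction. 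In the acute case $\tr = r$, there is no $\tUU_2$ block, and $\vnorm{\UU\UUt - \tUU\tUUt} \le 2\varepsilon/\gamma$ directly; expanding $\vnorm{\tUUt\bgg}^2 = \bgg^{\intercal}\UU\UUt\bgg + \bgg^{\intercal}(\tUU\tUUt - \UU\UUt)\bgg$ and using \cref{eq:null_space_grad_1} with this operator-norm bound gives the sharper $\tnu = \nu - 2\varepsilon/\gamma$.

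The main obstacle is exactly what forces the weaker constant in the general case: when $\rank(\tHH) > \rank(\HH)$ the full range projectors satisfy $\vnorm{\UU\UUt - \tUU\tUUt} = 1$, so the bilinear form $\bgg^{\intercal}(\tUU\tUUt - \UU\UUt)\bgg$ cannot be controlled as in the acute case. The device of splitting off the dominant block $\tUU_1$, whose projector is $2\varepsilon/\gamma$-close to $\UU\UUt$, and discarding $\tUU_2$ through orthogonality is what lets the estimate go through; the price is the $\sqrt{1-\nu}$ term, whose squaring produces the factor-two degradation $\nu \mapsto 2\nu - 1$ and the attendant requirement $\nu > 1/2$.
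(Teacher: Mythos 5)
Your proposal is correct and rests on exactly the same machinery as the paper's own proof: the Davis--Kahan estimate $\vnorm{\UU \UUt - \tUU_1 \tUU_1^{\intercal}} \leq 2\varepsilon/\gamma$ (with gap $\sigma_r(\HH) \geq \gamma$, $\sigma_{r+1}(\HH) = 0$), the resolution-of-identity comparison to control the $\tUU_2$ block, \cref{lemma:null_space_grad}, and Pythagoras to pass between \cref{eq:null_space_grad_pert_01} and \cref{eq:null_space_grad_pert_02}. The only difference is bookkeeping order: you bound the vector norm $\vnorm{\tUU_{\perp}^{\intercal}\bgg}$ first and obtain \cref{eq:null_space_grad_pert_02}, which costs you the extra squaring step $(a+b)^2 \leq 2a^2 + 2b^2$ and the absorption $8\varepsilon^2/\gamma^2 \leq 4\varepsilon/\gamma$, whereas the paper bounds the quadratic form $\abs{\bgg^{\intercal}(\UU\UUt - \tUU\tUUt)\bgg}$ and obtains \cref{eq:null_space_grad_pert_01} first --- both routes land on the identical $\tnu$ in both the general and acute cases.
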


\begin{proof}
	We have.
	\begin{align*}
	\vnorm{\tUUt \bgg}^{2} = \bgg^{\intercal} \tUU \tUUt \bgg = \bgg^{\intercal} \UU \UUt \bgg - \bgg^{\intercal} \left( \UU \UUt - \tUU \tUUt \right) \bgg.
	\end{align*}
	Similarly to the proof of \cref{thm:sub_proj} and using \cref{lemma:null_space_grad}, we have
	\begin{align*}
	\abs{\bgg^{\intercal} \UU \UUt \bgg - \bgg^{\intercal} \tUU \tUUt \bgg} &\leq \abs{\bgg^{\intercal} \UU \UU^{\intercal} \bgg - \bgg^{\intercal} \tUU_1 \tUU_1^{\intercal} \bgg} + \bgg^{\intercal} \tUU_2 \tUU_{2}^{\intercal} \bgg \leq \frac{2 \varepsilon}{\gamma} + \bgg^{\intercal} \tUU_2 \tUU_{2}^{\intercal} \bgg \\
	&\leq \frac{2 \varepsilon}{\gamma} + \abs{\bgg^{\intercal} \UU \UU^{\intercal} \bgg - \bgg^{\intercal} \tUU_1 \tUU_1^{\intercal} \bgg} + \bgg^{\intercal} \UUp \UU^{\intercal}_{\perp} \bgg \leq \left( \frac{4 \varepsilon}{\gamma} + 1 - \nu \right) \vnorm{\bgg}^2. 
	\end{align*}
	Now, it follows that
	\begin{align*}
	\vnorm{\tUUt \bgg}^{2} \geq \bgg^{\intercal} \UU \UUt \bgg - \left(\frac{4 \varepsilon}{\gamma} + 1 - \nu \right) \vnorm{\bgg}^2 \geq \left(2\nu - 1 - \frac{4 \varepsilon}{\gamma} \right) \vnorm{\bgg}^2,
	\end{align*}
	which gives \cref{eq:null_space_grad_pert_01}. Now noting that $ \|\bgg\|^{2} = \|\tUUt \bgg\|^{2}+\|\tUU_{\perp}^{\intercal} \bgg \|^{2} $, we get \cref{eq:null_space_grad_pert_02}. Finally, for the case of acute perturbations, since $ \tUU \tUUt = \tUU_1 \tUU_1^{\intercal}$ (recall that in this case $ \tSigma_{2} = \zero $), it follows that 
	\begin{align*}
		\abs{\bgg^{\intercal} \UU \UUt \bgg - \bgg^{\intercal} \tUU \tUUt \bgg} &= \abs{\bgg^{\intercal} \UU \UU^{\intercal} \bgg - \bgg^{\intercal} \tUU_1 \tUU_1^{\intercal} \bgg} \leq \frac{2 \varepsilon}{\gamma} \vnorm{\bgg}^{2}.
	\end{align*}
	which gives the desired result.
\end{proof}

As it is evident from \cref{thm:sub_proj}, situations where either $ \nu = 1 $ or the perturbation is acute, exhibit a certain inherent stability. More specifically, define the operator $ \bm{\pi}: \reals^{d \times d} \rightarrow \reals^{d}$ as $\bm{\pi}(\EE) \defeq (\HH + \EE) (\HH + \EE)^{\dagger} \bgg = \tHH \tHHd \bgg $, i.e., the projection of the gradient onto the range space of $ \tHH = \HH + \EE $. When $ \nu = 1 $ or the perturbation is acute, such a mapping is continuous at $ \EE = \zero $, i.e., $ \lim_{\vnorm{\EE} \to 0} \bm{\pi}(\EE) = \bm{\pi}(\zero) = \HH \HHd \bgg$. This observation gives rise to the following definition.

\begin{definition}[Inherent Stability]
	\label{def:stable}
	\vspace*{-2mm}
	A perturbation remains inherently stable if one of the following conditions hold:
	\begin{itemize}
		\item $ \tHH $ is an acute perturbation of $ \HH $, i.e., $ \rank(\tHH) = \rank(\HH) $, or
		\item $ \nu = 1 $ with $ \nu $ as in \cref{assmpt:null_space}, i.e., $ \bgg \in \range(\HH)$.
	\end{itemize}
\end{definition}
In light of \cref{def:stable} and \cref{lemma:null_space_tH}, we end this section by specifically stating the perturbation regimes where we develop our convergence theory of \Cref{sec:stability}.
\begin{condition}
	\label{cond:stable}
	\vspace*{-2mm}
	We consider two perturbation regimes:
	\begin{itemize}
		\item For general perturbations, we consider \cref{eq:perturb} with $\varepsilon < \gamma(2\nu - 1)/4 $ for $ 0.5 < \nu \leq 1 $. 
		\item For inherently stable perturbations, we consider \cref{eq:perturb} with $\varepsilon < \gamma \nu /2 $ for $ 0 < \nu \leq 1 $.
	\end{itemize}
	Here,  $ \gamma $ and $ \nu $ are, respectively, as in \cref{assmpt:pseudo_regularity,assmpt:null_space}. 
\end{condition}

\section{Newton-MR with Hessian Approximations}
\label{sec:stability}
In this section, we provide convergence analysis of Newton-MR with inexact Hessian. To do so, we first briefly review Newton-MR in \Cref{sec:nt-MR} and, in its light, introduce the variant in which the Hessian is approximated (\cref{alg:Newton_invex_sub}). This is then followed by its convergence analysis in \Cref{sec:convergence}. 
For the special case of strongly-convex problems, the convergence results of \Cref{sec:convergence} bear a strong resemblance to those of the classical Newton's method (and Newton-CG variant) with inexact Hessian, e.g., \cite{ssn2018,bollapragada2016exact}. These comparisons are made in more details in \Cref{sec:comparison}. 
%Of course, \Cref{sec:convergence} established the theoretical result of \cref{alg:Newton_invex_sub} in more general settings.

\subsection{Newton-MR Algorithm: Review}
\label{sec:nt-MR}
We now briefly review Newton-MR as it was introduced in \cite{roosta2018newton}.
In non-convex settings, the Hessian matrix could be indefinite and possibly rank-deficient. In this light, at the $ k\th $ iteration, Newton-MR in its pure form involves the exact update direction of the form
\begin{align}
\label{eq:newton_mr_pseudo}
\ppk = -\HHdk \bggk.
\end{align}
The exact update direction \cref{eq:newton_mr_pseudo} can be equivalently written as the least norm solution to the least squares problem $ \| \bggk + \HHk \pp \| $, i.e.,
\begin{align}
\label{eq:least_norm_solution}
\min_{\pp \in \reals^{d}} ~~ \|\pp\| \quad \text{subject to} \quad \pp \in \Argmin_{\widehat{\pp} \in \mathbb{R}^{d}} \vnorm{\HHk \widehat{\pp} + \bggk}.
\end{align}
In practice, computing the Moore-Penrose generalized inverse can be computationally prohibitive, in which case the inexact variant of Newton-MR makes use of approximate update direction as
\begin{align}
\label{eq:least_norm_solution_relaxed}
\text{Find } \ppk \in \range(\HHk), \quad \text{subject to} \quad \dotprod{\ppk,\HHk \bggk} \leq -(1-\theta) \vnorm{\bggk}^{2},
\end{align}
where $ \theta < 1 $ is the inexactness tolerance. It is easy to see that \cref{eq:least_norm_solution_relaxed} is implied by \cref{eq:least_norm_solution}. When $ \bggk \in \range(\HHk) $, i.e., the linear system $ \HHk \pp = - \bggk $ is consistent, MINRES \cite{paige1975solution} can be used to obtain (approximate) pseudo-inverse solution. However, due to its many desirable properties, MINRES-QLP \cite{choi2011minres} has been advocated in \cite{roosta2018newton} for more general cases as the preferred solver for \cref{eq:least_norm_solution} or \cref{eq:least_norm_solution_relaxed}. When the Hessian is perturbed, even if initially $ \bggk \in \range(\HHk) $, it is generally most likely that $ \bggk \notin \range(\tHHk) $, and hence MINRES-QLP remains the method of choice for our setting here.

After computing the update direction, the next iterate is obtained by moving along $ \ppk $ by some appropriate step length, i.e., $ \xxkk = \xxk + \alphak \ppk $. Note that from both \cref{eq:least_norm_solution,eq:least_norm_solution_relaxed} it follows that $ \dotprod{\ppk, \HHk \bggk} \leq 0 $, i.e., $ \ppk $ is a descent direction for the norm of the gradient, $ \| \bgg \|^{2} $. As a result, the step-size, $ \alphak $, can be chosen by applying Armijo-type line-search \cite{nocedal2006numerical} such that for some $ 0 \leq \alpha_k \leq 1 $, we have 
\begin{align}
\label{eq:armijo_gen}
\vnorm{\bggkk}^{2} \leq \vnorm{\bggk}^{2} + 2 \rho \alpha_k \dotprod{\ppk, \HHk \bggk},
\end{align}
where $ 0< \rho < 1 $ is a given line-search parameter. Typically, back-tracking strategy \cite{nocedal2006numerical} is employed to approximately find such a step-size.

Modification of Newton-MR to include the perturbed matrix $ \tHH $ as in \cref{eq:epsilon} is rather straightforward. We simply replace $ \HHk $ with $ \tHHk $ in all of \cref{eq:least_norm_solution,eq:least_norm_solution_relaxed,eq:armijo_gen}. The modified variant is depicted in \cref{alg:Newton_invex_sub}. Note that, in this context, whenever we refer to \cref{eq:least_norm_solution}, \cref{eq:least_norm_solution_relaxed} and \cref{eq:armijo_gen}, it is implied that $ \tHH $ is used instead of $ \HH $.

\begin{algorithm}
	\caption{Newton-MR With Inexact Hessian Information \label{alg:Newton_invex_sub}}
	\begin{algorithmic}[1]
		\vspace{1mm}
		\STATE \textbf{Input:} $ \xx_{0} $, $ 0 < \tau < 1 $, $ 0 < \rho < 1 $
		\vspace{1mm}
		\FOR {$ k = 0,1,2, \ldots $ until $ \| \bggk \| \leq \tau $} 
		%		\vspace{1mm}
		%		\STATE Form $ \tHHk $ such that $ \vnorm{\HHk - \tHHk} \leq \varepsilon $
		\vspace{1mm}
		\STATE Solve \eqref{eq:least_norm_solution} (or \eqref{eq:least_norm_solution_relaxed} with MINRES-QLP) with $ \tHHk $ in place of $ \HHk $
		\vspace{1mm}
		\STATE Find $ \alphak $ such that \eqref{eq:armijo_gen} holds with $ \tHHk $ in place of $ \HHk $
		\vspace{1mm}
		\STATE Update $ \xx_{k+1}  =  \xxk + \alphak \ppk $
		\vspace{1mm}
		\ENDFOR
		\vspace{1mm}
		\STATE \textbf{Output:} $ \xx $ for which $ \| \bggk \| \leq \tau $
	\end{algorithmic}
\end{algorithm}

\subsection{Convergence Analysis}
\label{sec:convergence}
In this section, we give the convergence analysis of \cref{alg:Newton_invex_sub}. For this, in \Cref{sec:exact} we first consider the exact update \cref{eq:least_norm_solution}, and subsequently in \Cref{sec:inexact}, consider the inexact variant where the update direction is computed approximately using \cref{eq:least_norm_solution_relaxed}.

\subsubsection{Exact Updates}
\label{sec:exact}	

A major ingredient in establishing the convergence of \cref{alg:Newton_invex_sub} using \cref{eq:least_norm_solution} is to obtain an upper-bound on the norm of the exact updates, $ \pp = - \tHHd \bgg $. This indeed is crucial in light of \cref{eq:epsilon_02}, which implies that $ \tHHd $ can grow unbounded as $ \varepsilon \downarrow 0 $. However, as in \cref{thm:sub_proj}, we can obtain a bound, which fits squarely into the notion of inherently stable perturbations from \cref{def:stable}. 

\begin{lemma}[Stability of Pseudo-inverse of Perturbed Hessian]
	\label{lemma:pseudo_regularity_tH}
	\vspace*{-2mm}
	Under \cref{assmpt:perturb} as well as Assumptions of \cref{thm:sub_proj}, we have
	\begin{align*}
	\vnorm{\tHHd \bgg} \leq \frac{1}{\tgamma} \vnorm{\bgg},
	\end{align*}
	where 
	\begin{align*}
	%	\label{eq:tgamma}
	\tgamma &\triangleq \left( \frac{1}{\gamma - \varepsilon} + C \left( \frac{2}{\gamma} +  \frac{\sqrt{1 - \nu}}{\varepsilon}\right) \right)^{-1}.
	\end{align*}
	Furthermore, in special case of acute perturbation, i.e., $ \rank(\HH) = \rank(\tHH) $, we have
	\begin{align*}
	%	\label{eq:tgamma_acute}
	\tgamma \triangleq \gamma - \varepsilon.
	\end{align*}	
	Here, $ \gamma, \nu, \varepsilon $ and $ C $ are, respectively, as in \cref{eq:perturb,eq:epsilon_02,eq:pseudo_regularity,eq:null_space}. 
\end{lemma}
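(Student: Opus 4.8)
The plan is to read off $\tHHd\bgg$ from the singular value decomposition of $\tHH$ in \cref{eq:svd} and split it according to the two blocks of nonzero singular values. Since the right singular vectors form an orthonormal set and $\tHHd$ annihilates the component of $\bgg$ lying in $\range(\tUU_{\perp})$, the norm $\vnorm{\tHHd\bgg}$ decomposes into a contribution from the large singular values $\tSigma_{1}$ (indices $1,\dots,r$) and one from the small singular values $\tSigma_{2}$ (indices $r+1,\dots,\tr$). Using $\vnorm{\tSigma_{1}^{-1}}\le 1/(\gamma-\varepsilon)$ from \cref{eq:sigma_tU1} and $\vnorm{\tSigma_{2}^{-1}}\le C/\varepsilon$ from \cref{eq:sigma_tU2}, I would arrive at
$$\vnorm{\tHHd\bgg} \le \frac{1}{\gamma-\varepsilon}\vnorm{\tUU_{1}^{\intercal}\bgg} + \frac{C}{\varepsilon}\vnorm{\tUU_{2}^{\intercal}\bgg}.$$

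It then remains to bound the two inner products. The first is immediate: since $\tUU_{1}$ has orthonormal columns, $\vnorm{\tUU_{1}^{\intercal}\bgg}\le\vnorm{\bgg}$, which produces the leading $1/(\gamma-\varepsilon)$ term of $1/\tgamma$. The delicate factor is $\vnorm{\tUU_{2}^{\intercal}\bgg}$, since it is amplified by $C/\varepsilon$, which blows up as $\varepsilon\downarrow0$ by \cref{assmpt:perturb}. The whole estimate stays finite only if $\vnorm{\tUU_{2}^{\intercal}\bgg}$ is itself of order $\varepsilon$ plus a $\sqrt{1-\nu}$ term, which is precisely the quantity already controlled in the proof of \cref{thm:sub_proj}.

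To bound it I would reuse that argument: since $\tUU_{2}$ has orthonormal columns, $\vnorm{\tUU_{2}^{\intercal}\bgg}=\vnorm{\tUU_{2}\tUU_{2}^{\intercal}\bgg}$, and by the Pythagorean identity $\vnorm{\tUU_{2}\tUU_{2}^{\intercal}\bgg}\le\vnorm{\tUU_{2}\tUU_{2}^{\intercal}\bgg+\tUU_{\perp}\tUU_{\perp}^{\intercal}\bgg}=\vnorm{(\eye-\tUU_{1}\tUU_{1}^{\intercal})\bgg}$. Inserting $\bgg=\UU\UU^{\intercal}\bgg+\UUp\UU_{\perp}^{\intercal}\bgg$ and using the triangle inequality gives
$$\vnorm{\tUU_{2}^{\intercal}\bgg}\le\vnorm{\UU\UU^{\intercal}\bgg-\tUU_{1}\tUU_{1}^{\intercal}\bgg}+\vnorm{\UUp\UU_{\perp}^{\intercal}\bgg}.$$
The first piece is handled by the Davis--Kahan bound \cref{eq:davis_khan} with $j=r$: since $\sigma_{r+1}(\HH)=0$ and $\sigma_{r}(\HH)\ge\gamma$ by \cref{eq:pseudo_regularity_H}, it is at most $(2\varepsilon/\gamma)\vnorm{\bgg}$, while the second is at most $\sqrt{1-\nu}\,\vnorm{\bgg}$ by \cref{eq:null_space_grad_2}. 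Multiplying $\vnorm{\tUU_{2}^{\intercal}\bgg}\le(2\varepsilon/\gamma+\sqrt{1-\nu})\vnorm{\bgg}$ by $C/\varepsilon$ and adding the first term yields $\vnorm{\tHHd\bgg}\le\bigl(1/(\gamma-\varepsilon)+2C/\gamma+C\sqrt{1-\nu}/\varepsilon\bigr)\vnorm{\bgg}=\vnorm{\bgg}/\tgamma$, as claimed. The acute case falls out of the same display: when $\rank(\HH)=\rank(\tHH)$ the block $\tSigma_{2}$ (and $\tUU_{2}$) is absent, so only the $1/(\gamma-\varepsilon)$ term survives and $\tgamma=\gamma-\varepsilon$.

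The main obstacle is conceptual rather than computational: the $C\sqrt{1-\nu}/\varepsilon$ contribution does not vanish as $\varepsilon\downarrow0$ unless $\nu=1$, so the bound is only genuinely meaningful in the inherently stable regime of \cref{def:stable}, or when \cref{cond:stable} forces $\sqrt{1-\nu}$ to be small relative to $\varepsilon$. The one place where \cref{assmpt:perturb} is truly essential is in tying $\vnorm{\tSigma_{2}^{-1}}$ to the \emph{lower} bound $\varepsilon/C$ on the small singular values of $\tHH$ from \cref{eq:sigma_tU2}, rather than to $\vnorm{\tHHd}$ viewed as an unconstrained operator norm; getting the constants to combine into exactly $1/\tgamma$ is then routine.
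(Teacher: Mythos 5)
Your proposal is correct and follows essentially the same route as the paper: split $\tHHd\bgg$ via the SVD blocks of \cref{eq:svd}, bound $\vnorm{\tSigma_{1}^{-1}}$ and $\vnorm{\tSigma_{2}^{-1}}$ using \cref{eq:sigma_tU1,eq:sigma_tU2}, and control $\vnorm{\tUU_{2}^{\intercal}\bgg}$ by exactly the Davis--Kahan-plus-null-space argument from the proof of \cref{thm:sub_proj}, with the acute case dropping the $\tSigma_{2}$ block. The only cosmetic difference is that you apply the triangle inequality to the two components directly, while the paper sums their squares via the Pythagorean theorem and then uses $\sqrt{a^{2}+b^{2}}\leq a+b$; the two are equivalent.
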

\begin{proof}
	Consider the SVD of $ \tHHd $ as in \cref{eq:svd}. Note that \cref{eq:sigma_tU1} implies that $ \| \tSigma_{1}^{-1} \| \leq 1/(\gamma - \varepsilon) $. Further, from \cref{eq:sigma_tU2}, it follows that $ \| \tSigma_{2}^{-1} \| \leq C/\varepsilon $. Now, it follows that
	\begin{align*}
	\vnorm{\tHHd \bgg}^{2} &= \vnorm{\tVV \begin{bmatrix}
		\tSigma_{1}^{-1} & 0 & 0\\
		0 & \tSigma_{2}^{-1} & 0\\
		0 & 0 & 0\\
		\end{bmatrix} \begin{bmatrix}
		\tUU_{1}^{\intercal} \\
		\tUU_{2}^{\intercal}\\
		\tUU_{\perp}^{\intercal}
		\end{bmatrix} \bgg}^{2} \\
	&= \vnorm{\tSigma_{1}^{-1} \tUU_{1}^{\intercal} \bgg}^{2} +  \vnorm{\tSigma_{2}^{-1} \tUU_{2}^{\intercal} \bgg}^{2} \\
	&\leq \frac{1}{(\gamma - \varepsilon)^{2}} \vnorm{\bgg}^{2} + C^{2} \left(\frac{2}{\gamma} + \frac{\sqrt{1 - \nu}}{\varepsilon}\right)^{2} \vnorm{\bgg}^{2},
	\end{align*}
	where  the last inequality is obtained using the bound on $ \vnorm{\tUU_{2}^{\intercal} \bgg} = \vnorm{\tUU_{2} \tUU_{2}^{\intercal} \bgg} $ as in the proof of \cref{thm:sub_proj}. The result follows from the inequality $ \sqrt{a^{2} + b^{2}} \leq a + b, \; \forall a,b \geq 0 $.
\end{proof}

\begin{figure}[!htbp]
\centering
\vspace*{-5mm}
\subfigure[$ \nu = 1 $]{\includegraphics[scale=0.16]{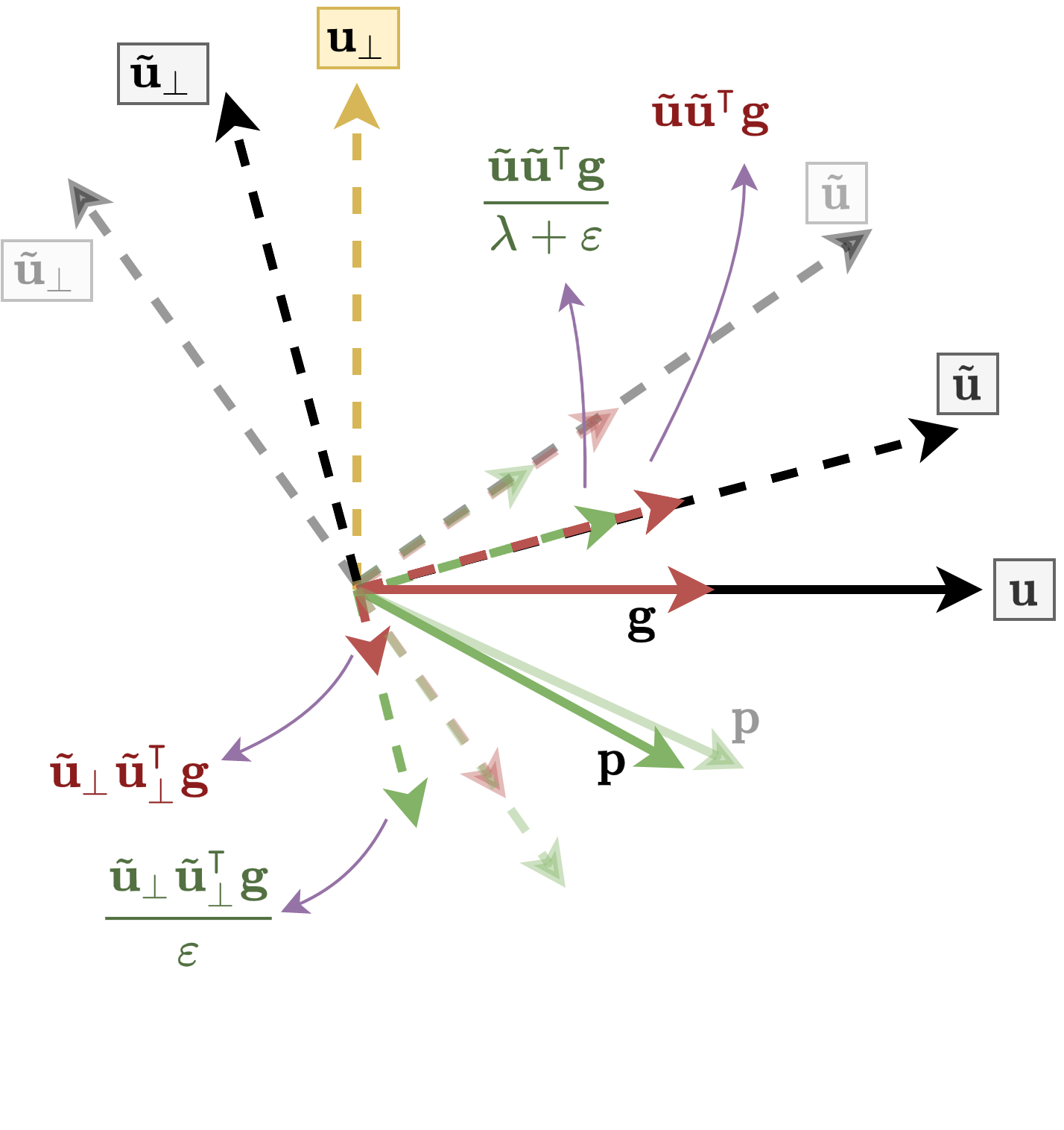}}\hspace{15mm}
\subfigure[$ \nu < 1 $]{\includegraphics[scale=0.16]{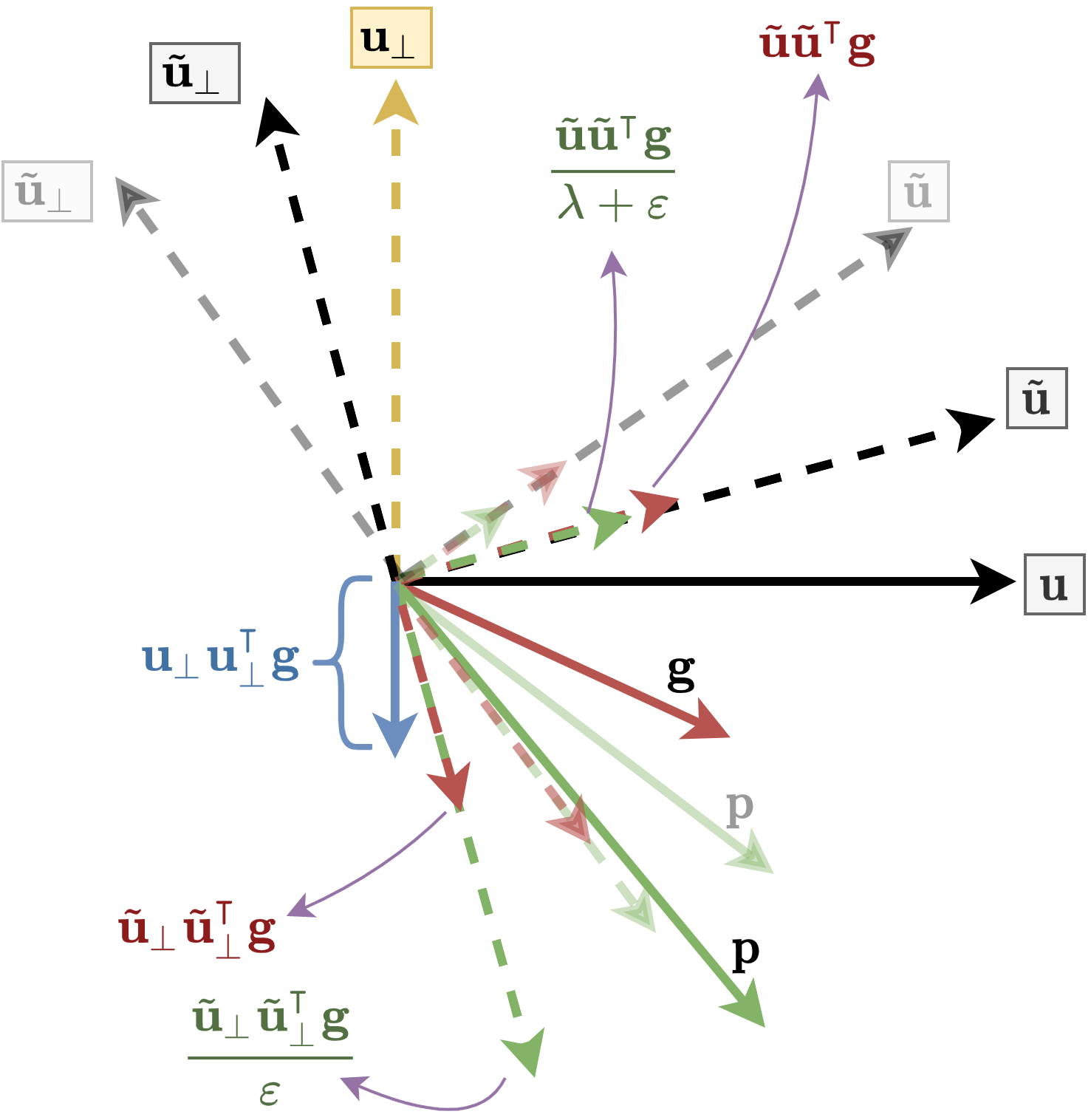}}
\caption{Illustration of \cref{lemma:pseudo_regularity_tH} for non-acute perturbations when $ \rank(\HH) = 1 $ and $ d = 2 $. Here $ \Span\{\uu\} = \range(\HH) $, $ \Span\{\uu_{\perp}\} = \Null(\HH) $, $ \lambda $ is the non-zero eigenvalue of $ \HH $, $ \{\tilde{\uu},\tilde{\uu}_{\perp}\} $ are the eigenvectors of $ \tHH $ and $ \rank(\tHH) = 2 $.  Also, $ \pp = \tHHd \bgg = \tilde{\uu} \tilde{\uu}^{\intercal} \bgg/\tilde{\lambda} + \tilde{\uu}_{\perp} \tilde{\uu}_{\perp}^{\intercal} \bgg/\varepsilon $, where $ \tilde{\lambda} \in \lambda \pm \bigO{\varepsilon} $ (here, we assume $ \lambda \geq 1$ and, for simplicity, $\tilde{\lambda} = \lambda +\varepsilon $). The transparent arrows depict the case for the larger $ \varepsilon $ while the opaque ones correspond to the smaller $ \varepsilon $. (a) When $ \nu = 1 $, $ \bgg $ lies entirely along $ \uu $. Hence, its projections on $ \tilde{\uu}_{\perp} $ will shrink to zero as $ \varepsilon \downarrow 0 $, and $ \lim_{\varepsilon \downarrow 0} \vnorm{\tilde{\uu}_{\perp} \tilde{\uu}_{\perp}^{\intercal} \bgg}/\varepsilon \in \bigO{1} $. (b) When $ \nu < 1 $, $ \bgg $ has non-zero components along $ \uu_{\perp} $. Hence, its projections on $ \tilde{\uu}_{\perp} $ will be bounded away from zero, i.e., $ \lim_{\varepsilon \downarrow 0} \tilde{\uu}_{\perp} \tilde{\uu}_{\perp}^{\intercal} \bgg = \uu_{\perp} \uu_{\perp}^{\intercal} \bgg $, and $ \vnorm{\tilde{\uu}_{\perp} \tilde{\uu}_{\perp}^{\intercal} \bgg}/\varepsilon \xrightarrow[\varepsilon \downarrow 0]{} \infty$. As a result, when $ \varepsilon \downarrow 0 $, this component of $ \pp $ grows unboundedly. \label{fig:pert}}
\end{figure}

\begin{remark}
	\vspace*{-2mm}
	As indicated in \Cref{sec:pert}, obtaining a bound similar to \cref{eq:pseudo_regularity_H} for $ \tHH $ is entirely dependent on having $ \rank(\HH) = \rank(\tHH) $, which is too stringent to require. \cref{lemma:pseudo_regularity_tH} indicates that, when $ \rank(\HH) \neq \rank(\tHH) $, despite the fact that $ \| \tHHd \| $ can grow unbounded as $ \varepsilon \downarrow 0 $, if $ \nu = 1 $, we are guaranteed that the action of $ \tHHd $ on $ \bgg $, i.e., the exact Newton-MR direction, indeed remains bounded. 
	If $ \nu < 1 $ and $ \rank(\HH) \neq \rank(\tHH) $, then $ \| \tHHd \bgg \| $ can become increasingly larger with smaller perturbations, resulting in an algorithm that might no longer converge; see the numerical examples of \Cref{sec:frac}. 
	An intuitive illustration of this phenomenon is also depicted in \cref{fig:pert}.
\end{remark}

For our convergence proofs, we frequently make use of the following result.
\begin{lemma}[{\!\!\cite[Lemma 10]{roosta2018newton}}]
	\label{lemma:aux}
	\vspace*{-2mm}
	Consider any $ \xx,\zz \in \mathbb{R}^{d} $, $ 0 \leq L < \infty $ and $ h: \mathbb{R}^{d} \rightarrow \mathbb{R} $. If it holds that $\vnorm{\nabla h(\yy) - \nabla h(\xx)} \leq L \vnorm{\yy - \xx}, \; \forall \yy \in \left[\xx, \zz\right]$, then, we have $h(\yy) \leq h(\xx) + \dotprod{\nabla h(\xx), \yy - \xx} + L \vnorm{\yy - \xx}^2/2, \; \forall \yy \in \left[\xx, \zz\right]$.
\end{lemma}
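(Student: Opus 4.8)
The plan is to reduce this multivariate inequality to a one-dimensional statement along the segment and then invoke the fundamental theorem of calculus. First I would fix an arbitrary $\yy \in [\xx,\zz]$, so that $\yy = \xx + \tau(\zz - \xx)$ for some $\tau \in [0,1]$, and define the scalar function $\phi(t) \triangleq h(\xx + t(\yy - \xx))$ for $t \in [0,1]$. Since $h$ is differentiable, the chain rule gives $\phi'(t) = \dotprod{\nabla h(\xx + t(\yy - \xx)), \yy - \xx}$, reducing everything to an integral in $t$.

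By the fundamental theorem of calculus, $h(\yy) - h(\xx) = \phi(1) - \phi(0) = \int_0^1 \phi'(t)\,\df t$. Subtracting the linear term, written as $\dotprod{\nabla h(\xx), \yy - \xx} = \int_0^1 \dotprod{\nabla h(\xx), \yy - \xx}\,\df t$, I would express the gap as $\int_0^1 \dotprod{\nabla h(\xx + t(\yy - \xx)) - \nabla h(\xx), \yy - \xx}\,\df t$ and bound it via Cauchy--Schwarz by $\int_0^1 \vnorm{\nabla h(\xx + t(\yy-\xx)) - \nabla h(\xx)}\,\vnorm{\yy - \xx}\,\df t$.

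The crucial step is applying the hypothesis to the intermediate points $\xx + t(\yy - \xx)$. Because the assumed Lipschitz bound is only \emph{localized} to the segment $[\xx, \zz]$, I must verify these points genuinely lie there; this is the one place requiring care. It works out because $\yy = \xx + \tau(\zz - \xx)$ implies $\xx + t(\yy - \xx) = \xx + (t\tau)(\zz - \xx)$ with $t\tau \in [0,1]$, so each such point belongs to $[\xx, \zz]$ and the hypothesis yields $\vnorm{\nabla h(\xx + t(\yy - \xx)) - \nabla h(\xx)} \leq L\, t\, \vnorm{\yy - \xx}$. Substituting this and evaluating $\int_0^1 t\,\df t = 1/2$ delivers the claimed bound $h(\yy) \leq h(\xx) + \dotprod{\nabla h(\xx), \yy - \xx} + L\vnorm{\yy-\xx}^2/2$; since $\yy \in [\xx,\zz]$ was arbitrary, the inequality holds along the whole segment.

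I expect no genuine obstacle beyond the bookkeeping just noted: the localization of the Lipschitz assumption is harmless precisely because the sub-segment from $\xx$ to $\yy$ sits inside $[\xx, \zz]$. A minor technical point is confirming that $\phi'$ is integrable so the fundamental theorem of calculus applies, but this is automatic, since the Lipschitz property forces $\nabla h$ to be continuous along the segment and hence $\phi'$ continuous on $[0,1]$.
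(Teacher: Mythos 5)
Your proof is correct and takes essentially the same route as the paper's source: this paper states \cref{lemma:aux} without proof, importing it from \cite[Lemma 10]{roosta2018newton}, where it is established by exactly this argument---parametrize the segment via $\phi(t) \triangleq h(\xx + t(\yy-\xx))$, apply the fundamental theorem of calculus and Cauchy--Schwarz, and use the key observation that $\xx + t(\yy-\xx) = \xx + t\tau(\zz-\xx) \in [\xx,\zz]$ so the anchored Lipschitz hypothesis applies with constant $L t \vnorm{\yy-\xx}$. The one imprecision is your closing remark: the hypothesis bounds $\nabla h$ only relative to the fixed point $\xx$, so it gives continuity of $\nabla h$ along the segment only \emph{at} $\xx$, not at intermediate points; the integral identity $\phi(1)-\phi(0)=\int_0^1 \phi'(t)\,\df t$ should instead be justified by noting that $\phi$ is differentiable everywhere on $[0,1]$ with bounded derivative (by the same anchored bound), hence Lipschitz and absolutely continuous, which suffices.
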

If we take $ h(\xx) = \| \bgg(\xx) \|^{2}/2 $, the constant $ L $ in \cref{lemma:aux} is $ L(\xx_{0}) $ in \cref{assmpt:lipschitz_special}.

We now establish a general structural result, which allows for obtaining sufficient conditions for convergence of \cref{alg:Newton_invex_sub}.
\begin{theorem}[\cref{alg:Newton_invex_sub} With Exact Updates]
	\label{thm:exact} 
	\vspace*{-2mm}
	Under \cref{assmpt:perturb,assmpt:moral_diff,assmpt:lipschitz_special,assmpt:pseudo_regularity,assmpt:null_space,cond:stable}, for the iterates of \cref{alg:Newton_invex_sub} with exact updates \cref{eq:least_norm_solution}, we have
	\begin{align*}
	\vnorm{\bggkk}^{2} \leq \left(1 - \eta\right) \vnorm{\bggk}^{2}, 
	\end{align*}
	where
	\begin{align*}
		\eta \defeq \max \left\{0,\frac{4 \rho \tnu \tgamma^{2}}{L(\xxo)}\left( (1 - \rho) \tnu - \frac{\varepsilon}{\tgamma}\right)\right\} \in [0,1],
	\end{align*}
	and $ \rho, L(\xx_{0}), \tnu$ and $\tgamma$ are, respectively, as in \cref{eq:armijo_gen}, \cref{assmpt:lipschitz_special}, and \cref{lemma:pseudo_regularity_tH,lemma:null_space_tH}.
\end{theorem}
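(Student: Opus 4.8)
The plan is to combine the sufficient-decrease guarantee of the Armijo line search \cref{eq:armijo_gen} with a descent-lemma estimate from \cref{lemma:aux}, while carefully tracking the discrepancy between the perturbed Hessian $\tHHk$ that drives the algorithm and the true Hessian $\HHk$ that governs the actual change in $\vnorm{\bgg}^{2}$. First I would record the two structural facts about the exact direction $\ppk = -\tHHdk \bggk$. Since $\tHHk$ is symmetric, $\tHHdk \tHHk = \tUU \tUUt$ is the orthogonal projector onto $\range(\tHHk)$, so $\dotprod{\ppk, \tHHk \bggk} = -\vnorm{\tUUt \bggk}^{2} \leq -\tnu \vnorm{\bggk}^{2}$ by \cref{lemma:null_space_tH}, confirming that $\ppk$ is a strict descent direction for $\vnorm{\bgg}^{2}$; and $\vnorm{\ppk} \leq \vnorm{\bggk}/\tgamma$ by \cref{lemma:pseudo_regularity_tH}.

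Next I would set $h(\xx) = \tfrac12 \vnorm{\bgg(\xx)}^{2}$, whose gradient is $\nabla h = \HH \bgg$, and invoke \cref{lemma:aux} with the Lipschitz constant $L(\xxo)$ from \cref{assmpt:lipschitz_special}. Because that constant is only valid on $\mathcal{X}_0$, I would first argue inductively that every iterate stays in $\mathcal{X}_0$, i.e.\ $\vnorm{\bgg(\xxk)} \leq \vnorm{\bgg(\xxo)}$, which holds since each accepted step enforces monotone decrease of $\vnorm{\bgg}$ through \cref{eq:armijo_gen}. The descent lemma then yields $\vnorm{\bggkk}^{2} \leq \vnorm{\bggk}^{2} + 2\alphak \dotprod{\HHk \bggk, \ppk} + L(\xxo) \alphak^{2} \vnorm{\ppk}^{2}$. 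Since the algorithm uses $\tHHk$ rather than $\HHk$, I would split $\dotprod{\HHk \bggk, \ppk} = \dotprod{\tHHk \bggk, \ppk} + \dotprod{(\HHk - \tHHk) \bggk, \ppk}$ and control the cross term by $\vnorm{\HHk - \tHHk} \vnorm{\bggk} \vnorm{\ppk} \leq (\varepsilon/\tgamma) \vnorm{\bggk}^{2}$, using $\vnorm{\EE} \leq \varepsilon$ from \cref{eq:epsilon} together with the norm bound on $\ppk$.

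With these estimates, I would determine the range of step sizes for which \cref{eq:armijo_gen} is guaranteed. Substituting the descent-lemma bound and the two directional estimates, the Armijo inequality reduces to a requirement of the form $L(\xxo) \alphak \vnorm{\ppk}^{2} \leq -2(1-\rho) \dotprod{\tHHk \bggk, \ppk} - 2 \dotprod{(\HHk - \tHHk) \bggk, \ppk}$, whose right-hand side is at least $2\big( (1-\rho)\tnu - \varepsilon/\tgamma \big) \vnorm{\bggk}^{2}$. Dividing through and using $\vnorm{\ppk}^{2} \leq \vnorm{\bggk}^{2}/\tgamma^{2}$ shows that Armijo holds for every $\alphak \leq \alpha_{\min} \defeq (2\tgamma^{2}/L(\xxo))\big( (1-\rho)\tnu - \varepsilon/\tgamma \big)$. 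Hence the line search returns a step with $\alphak \geq \alpha_{\min}$ (assuming the usual unit-capped backtracking, with the cap absorbed into the final clamp), and feeding $\alphak \geq \alpha_{\min}$ together with $\dotprod{\ppk, \tHHk \bggk} \leq -\tnu \vnorm{\bggk}^{2}$ back into \cref{eq:armijo_gen} gives $\vnorm{\bggkk}^{2} \leq (1 - 2\rho \tnu \alpha_{\min}) \vnorm{\bggk}^{2}$, where $2\rho \tnu \alpha_{\min}$ is exactly the stated $\eta$. The clamp $\eta = \max\{0, \cdot\} \in [0,1]$ accounts for the regime $(1-\rho)\tnu \leq \varepsilon/\tgamma$, in which no decrease can be certified.

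The main obstacle is the bookkeeping forced by the Hessian mismatch: the direction is computed from $\tHHk$, whereas the genuine first-order change of $\vnorm{\bgg}^{2}$ is dictated by $\HHk$, so the cross term $\dotprod{(\HHk - \tHHk) \bggk, \ppk}$ must be absorbed without destroying the negativity of the directional derivative. This is precisely where \cref{cond:stable} enters, as it keeps $\varepsilon$ small enough (relative to $\gamma$ and $\nu$, hence to $\tgamma$ and $\tnu$ through \cref{lemma:null_space_tH,lemma:pseudo_regularity_tH}) that $(1-\rho)\tnu > \varepsilon/\tgamma$ and thus $\eta > 0$. A secondary technical point is certifying both the uniform lower bound on the accepted step and the containment $\xxk \in \mathcal{X}_0$ that legitimizes the single constant $L(\xxo)$; these are routine once the descent inequality is assembled, but are essential for $\eta$ to be uniform across all iterations.
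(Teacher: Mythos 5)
Your proposal is correct and follows essentially the same route as the paper's proof: the descent-lemma estimate from \cref{lemma:aux} applied to $h = \tfrac12\vnorm{\bgg}^2$, the splitting of $\dotprod{\HHk\bggk,\ppk}$ into the $\tHHk$ term (bounded via \cref{lemma:null_space_tH}) plus a cross term bounded by $(\varepsilon/\tgamma)\vnorm{\bggk}^2$ via \cref{lemma:pseudo_regularity_tH}, the resulting step-size lower bound $\alpha = (2\tgamma^2/L(\xxo))\bigl((1-\rho)\tnu - \varepsilon/\tgamma\bigr)$, and the substitution back into \cref{eq:armijo_gen} to get $\eta = 2\rho\tnu\alpha$. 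Your explicit induction that the iterates remain in $\mathcal{X}_0$ is a detail the paper leaves implicit, and the only item you treat more casually than the paper is the verification that $\eta \leq 1$, which the paper derives from $\tnu\le\nu$, $\tgamma\le\gamma$ and a bound quoted from the Newton-MR paper.
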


\begin{proof} 
	\label{pf:exact}
	From \cref{lemma:aux} with $ \xx = \xxk $, $ \zz = \xxk + \ppk $, $ \yy = \xxk + \alpha \ppk $, and $ h(\xx) = \| \bgg(\xx) \|^{2}/2 $, we have 
	\begin{align}
	\label{eq:aux}
	\vnorm{\bgg_{k+1}}^{2} \leq \vnorm{\bggk}^{2} + 2 \alpha \dotprod{\ppk, \HHk \bggk} + \alpha^2 L(\xx_{0}) \vnorm{ \ppk }^2
	\end{align}   
	Now to obtain a sufficient condition on $ \alpha $ to satisfy \cref{eq:armijo_gen}, we consider the inequality 
	\begin{align*}
	2 \alpha \dotprod{\ppk, \HHk \bggk} + \alpha^2 L(\xx_{0}) \vnorm{ \ppk }^2 \leq 2 \rho \alpha \dotprod{\ppk, \tHHk \bggk},
	\end{align*}
	we used \cref{eq:aux} as upper bound on $ \vnorm{\bgg_{k+1}}^{2} $. 
	Rearranging gives
	\begin{align}
	\label{eq:alpha}
	\alpha \leq \frac{2 \left( \rho \dotprod{ \bggk, \tHHk \ppk} - \dotprod{\bggk, \HHk \ppk} \right)}{L(\xx_{0}) \vnorm{ \ppk }^2 }.
	\end{align}
	By \cref{eq:perturb,lemma:pseudo_regularity_tH,lemma:null_space_tH}, we have
	\begin{align*}
	\rho \dotprod{ \bggk, \tHHk \ppk} - \dotprod{\bggk, \HHk \ppk} &= -(1 - \rho)\dotprod{ \bggk, \tHHk \ppk} - \dotprod{\bggk, \left(\HHk - \tHHk\right) \ppk} \nonumber\\
	&\geq (1 - \rho) \tnu \vnorm{\bggk}^2 - \frac{\varepsilon}{\tgamma}\vnorm{\bggk}^2.
	\end{align*}
	If $ \varepsilon $ satisfies the inequality $ \varepsilon \leq (1 - \rho) \tnu \tgamma $, the lower bound on the step-size returned by line-search \cref{eq:armijo_gen} is $ \alphak \geq \alpha $ where
	\begin{align*}
		\alpha \defeq \frac{2 \tgamma^{2}}{L(\xx_{0})}\left( (1 - \rho) \tnu - \frac{\varepsilon}{\tgamma}\right).
	\end{align*}
	Otherwise, the lower bound is the trivial $ \alpha = 0 $.
	Now, from \cref{eq:armijo_gen} with the lower bound $ \alpha $, we get
	\begin{align*}
		\vnorm{\bggkk}^{2} &\leq \vnorm{\bggk}^{2} + 2 \rho \alphak \dotprod{\tHHk \ppk, \bggk} \leq \vnorm{\bggk}^{2} - 2 \rho \alphak \tnu \vnorm{\bggk}^{2} \leq \left(1 - 2 \rho \alpha \tnu\right) \vnorm{\bggk}^{2},
	\end{align*}
	which implies $ \eta = 2 \rho \alpha \tnu $. We finally note that, from $ \tnu \leq \nu $, $ \tgamma \leq \gamma $, we have
	\begin{align*}
		0 \leq \eta = \frac{4 \rho \tnu \tgamma^{2}}{L(\xx_{0})}\left( (1 - \rho) \tnu - \frac{\varepsilon}{\tgamma}\right) \leq \frac{4 \rho (1 - \rho) \nu^2 \gamma^{2}}{L(\xx_{0})} \leq 1,
	\end{align*}
	where the second inequality follows from \cite[Remark 5]{roosta2018newton}.
\end{proof}

From \cref{thm:exact}, it is clear that when $ (1 - \rho) \tnu \leq {\varepsilon}/{\tgamma} $, we have $ \eta = 0 $ and, as a result, a sufficient descent required for convergence cannot be established. This is, in fact, not a by-product of our analysis. Indeed, from \cref{lemma:pseudo_regularity_tH}, it follows that, as $ \varepsilon \downarrow 0 $, we have $ \tgamma \downarrow 0$, which implies that the least-norm solution can grow unboundedly. This is depicted in \cref{fig:pert}. As a consequence, the step-size from line-search may shrink to zero to counteract such unbounded growth. This phenomenon is also verified numerically in \Cref{sec:frac}. However, under certain conditions, we can indeed show that $ \eta > 0 $, which guarantees convergence.

\begin{corollary}[Convergence of \cref{alg:Newton_invex_sub} With Exact Updates Under General Perturbations]
	\label{coro:exact_nuis0}
	\vspace*{-2mm}
	Define 
	\begin{align*}
		a &\defeq C + 2 (1-\rho), \quad b \defeq (1-\rho)(2\nu - 1) - C \sqrt{1-\nu}, \\
		\delta(t) &\defeq \frac{\sqrt{\left(t^{2} + 4 (1-\rho)^{2}\right)^{2} - 16(1-\rho)^{4}} - \left(t^{2} - 4 (1-\rho)^{2} \right)}{8 (1-\rho)^{2}} < 1, \; \forall t \geq 1.
	\end{align*} 
	Under the assumptions of \cref{thm:exact}, if
	\begin{align*}
		\varepsilon < \frac{\gamma \left(2a + b + 1 - \sqrt{(2a + b + 1)^2 - 8 a b}\right)}{4a}, \quad \text{and} \quad \nu > \delta(C),
	\end{align*}
	we have $ \eta \in (0,1]$. 
	Here, $ \gamma, \nu, C $ and $ \rho $ are, respectively, as in \cref{eq:pseudo_regularity,eq:null_space,eq:epsilon_02,eq:armijo_gen}. 
\end{corollary}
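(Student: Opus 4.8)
The plan is to start from the explicit formula for $\eta$ given in \cref{thm:exact} and reduce the claim ``$\eta \in (0,1]$'' to a single sign condition. Since \cref{thm:exact} already guarantees $\eta \in [0,1]$, I only need to establish $\eta > 0$. Writing $\eta = \max\{0, (4\rho\tnu\tgamma^{2}/L(\xxo))((1-\rho)\tnu - \varepsilon/\tgamma)\}$ and noting that \cref{cond:stable} (part of the hypotheses of \cref{thm:exact}) forces $\tnu > 0$ while $\tgamma > 0$ holds automatically for $\varepsilon < \gamma$, the positivity of $\eta$ is \emph{equivalent} to the single inequality
\[
(1-\rho)\tnu > \frac{\varepsilon}{\tgamma}.
\]
So the whole corollary comes down to translating this inequality into the stated thresholds on $\varepsilon$ and $\nu$.

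Next I would substitute the general-perturbation expressions $\tnu = 2\nu - 1 - 4\varepsilon/\gamma$ from \cref{lemma:null_space_tH} and $1/\tgamma = 1/(\gamma-\varepsilon) + C(2/\gamma + \sqrt{1-\nu}/\varepsilon)$ from \cref{lemma:pseudo_regularity_tH}. Multiplying the latter by $\varepsilon$ gives $\varepsilon/\tgamma = \varepsilon/(\gamma-\varepsilon) + 2C\varepsilon/\gamma + C\sqrt{1-\nu}$, and after collecting the $C\sqrt{1-\nu}$ and the $\varepsilon/\gamma$ terms the key inequality becomes exactly
\[
b > \frac{\varepsilon}{\gamma-\varepsilon} + \frac{2a\varepsilon}{\gamma},
\]
with $a = C + 2(1-\rho)$ and $b = (1-\rho)(2\nu-1) - C\sqrt{1-\nu}$ precisely as defined in the statement (the coefficient $2a = 2C + 4(1-\rho)$ absorbs both the $2C\varepsilon/\gamma$ term and the $4(1-\rho)\varepsilon/\gamma$ coming from $\tnu$). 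Setting $x \defeq \varepsilon/\gamma \in (0,1)$ and clearing the denominator $1-x > 0$ turns this into the clean quadratic inequality $2a x^{2} - (2a+b+1)x + b > 0$.

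It then remains to read off the admissible range of $x$. The parabola opens upward ($a>0$), its product of roots is $b/(2a)$ and its sum of roots is $(2a+b+1)/(2a)>0$, so both roots are real and positive precisely when $b>0$ (the discriminant $(2a+b+1)^2 - 8ab = (2a-b)^2 + 1 + 4a + 2b$ is manifestly positive once $b>0$). In that case the inequality holds for small $x$ iff $x$ lies below the smaller root, i.e. $x < \bigl((2a+b+1) - \sqrt{(2a+b+1)^2 - 8ab}\bigr)/(4a)$, which upon multiplying by $\gamma$ is exactly the stated bound on $\varepsilon$. Finally I would show $b>0 \iff \nu > \delta(C)$: rewrite $b>0$ as $(1-\rho)(2\nu-1) > C\sqrt{1-\nu}$, set $s=\sqrt{1-\nu}$ to get $2(1-\rho)s^{2} + Cs - (1-\rho) < 0$, solve for the positive root $s$, and square back; the resulting threshold on $\nu$ matches $\delta(C)$ after using the identity $(C^{2}+4(1-\rho)^{2})^{2} - 16(1-\rho)^{4} = C^{2}(C^{2}+8(1-\rho)^{2})$, so that $\sqrt{(C^2+4(1-\rho)^2)^2 - 16(1-\rho)^4} = C\sqrt{C^2 + 8(1-\rho)^2}$.

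I expect the main obstacle to be purely the algebraic bookkeeping: verifying that the substitution collapses to the quadratic $2ax^2 - (2a+b+1)x + b$ with the exact coefficients $a,b$, and then recognizing the radical simplification that identifies the $b>0$ threshold with the closed form $\delta(C)$. No analytic difficulty arises beyond this; once the two thresholds are matched, positivity of $\eta$ follows and $\eta\le 1$ is inherited verbatim from \cref{thm:exact}.
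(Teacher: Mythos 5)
Your proposal is correct and follows exactly the route the paper indicates: the paper omits the detailed algebra but states that the proof ``amounts to finding conditions for which $ (1-\rho)\tnu > \varepsilon/\tgamma $,'' which is precisely your reduction, followed by substituting the general-perturbation expressions for $\tnu$ and $\tgamma$ from \cref{lemma:null_space_tH,lemma:pseudo_regularity_tH}. Your subsequent bookkeeping checks out: the inequality collapses to $2ax^{2}-(2a+b+1)x+b>0$ with $x=\varepsilon/\gamma$, the stated $\varepsilon$-threshold is the smaller root, and the identity $\left(C^{2}+4(1-\rho)^{2}\right)^{2}-16(1-\rho)^{4}=C^{2}\left(C^{2}+8(1-\rho)^{2}\right)$ correctly identifies $b>0$ with $\nu>\delta(C)$, so $\eta>0$ follows while $\eta\leq 1$ is inherited from \cref{thm:exact}.
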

The proof of \cref{coro:exact_nuis0} amounts to finding conditions for which $ (1 - \rho) \tnu > {\varepsilon}/{\tgamma} $, which we omit.
However, there is an interesting interplay between $ C $ and $ \nu $ in \cref{coro:exact_nuis0}. Indeed, since $ \delta(t) $ is increasing in $ t $, for perturbations with large $ C $ in \cref{eq:epsilon_02}, we can guarantee convergence as along as $ \nu  $ is close to one, i.e., the gradient contains a very small contribution from $ \Null(\HH) $. Furthermore, choosing smaller values for $ \rho $ eases some (though not all) of this restriction on $ \nu $. However, under inherently stable perturbations (cf.\ \cref{def:stable}), this restriction on $ \nu $ imposed by $ C $ can be entirely removed.

\begin{corollary}[Convergence of \cref{alg:Newton_invex_sub} With Exact Updates Under Inherent Stability]
	\label{coro:exact_nuis1}
	\vspace*{-2mm}
	Under the assumptions of \cref{thm:exact}, if the following conditions hold, we have $ \eta \in (0,1]$:
	\begin{itemize}
		\smallskip \item if the perturbation is acute and $ \varepsilon $ is small enough such that $ \varepsilon < (1 - \rho) (\gamma-\varepsilon) (\nu \gamma -2 \varepsilon)/\gamma $, then 
		\begin{align*}
			\eta \defeq \frac{4 \rho  (\nu \gamma- 2 \varepsilon) (\gamma-\varepsilon)^{2}}{\gamma^{2} L(\xx_{0})}\left( (1 - \rho) (\nu \gamma -2 \varepsilon) - \frac{\gamma \varepsilon}{\gamma-\varepsilon}\right) \in (0,1],
		\end{align*}
		\smallskip \item otherwise if $ \nu =1 $ and $ \varepsilon $ is small enough such that $ \varepsilon  < (1 - \rho) (\gamma-\varepsilon)(\gamma-4 \varepsilon)/\left((1+2C)\gamma - 2C\varepsilon \right)$, then	
		\begin{align*}
			\eta \defeq \frac{4 \rho (\gamma- 4 \varepsilon) (\gamma-\varepsilon)^{2}}{\left((1+2C)\gamma - 2C\varepsilon \right)^{2} L(\xx_{0})}\left( (1 - \rho) (\gamma-4 \varepsilon) - \frac{\left((1+2C)\gamma - 2C\varepsilon \right)\varepsilon}{\gamma-\varepsilon}\right) \in (0,1].
		\end{align*}
	\end{itemize}
	Here, $ L(\xx_{0}), \gamma, \nu, C $, and $ \rho $ are as in \cref{eq:armijo_gen,eq:epsilon_02,eq:pseudo_regularity,eq:lip_special,eq:null_space}, respectively.
\end{corollary}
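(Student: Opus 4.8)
The plan is to specialize the closed-form descent factor $\eta$ from \cref{thm:exact} to the two inherently stable regimes of \cref{def:stable}, by substituting the corresponding values of $\tgamma$ and $\tnu$ supplied by \cref{lemma:pseudo_regularity_tH,lemma:null_space_tH}. In each case the positivity of $\eta$ then reduces to a single explicit inequality in $\varepsilon$, which I will match to the stated bound. Since \cref{thm:exact} already guarantees $\eta \in [0,1]$, only the strict lower bound $\eta > 0$ needs to be argued.

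For the acute case I would insert the acute-perturbation values $\tgamma = \gamma - \varepsilon$ from \cref{lemma:pseudo_regularity_tH} and $\tnu = \nu - 2\varepsilon/\gamma = (\nu\gamma - 2\varepsilon)/\gamma$ from \cref{lemma:null_space_tH} into
\begin{align*}
\eta = \frac{4\rho\,\tnu\,\tgamma^{2}}{L(\xxo)}\left((1-\rho)\tnu - \frac{\varepsilon}{\tgamma}\right).
\end{align*}
Factoring $1/\gamma$ out of $\tnu$ and a further $1/\gamma$ out of the bracket reproduces the claimed expression with $\gamma^{2}L(\xxo)$ in the denominator. The prefactor $4\rho\,\tnu\,\tgamma^{2}/L(\xxo)$ is nonnegative once $\tnu>0$, so $\eta>0$ is equivalent to the bracket $(1-\rho)(\nu\gamma - 2\varepsilon) - \gamma\varepsilon/(\gamma-\varepsilon)$ being strictly positive; clearing the denominator $\gamma-\varepsilon>0$ gives exactly $\varepsilon < (1-\rho)(\gamma-\varepsilon)(\nu\gamma - 2\varepsilon)/\gamma$, the stated condition.

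For the $\nu=1$ case I would instead use the general (non-acute) formulas, setting $\sqrt{1-\nu}=0$. Then \cref{lemma:pseudo_regularity_tH} gives $\tgamma = \bigl(1/(\gamma-\varepsilon) + 2C/\gamma\bigr)^{-1} = \gamma(\gamma-\varepsilon)/\bigl((1+2C)\gamma - 2C\varepsilon\bigr)$, while \cref{lemma:null_space_tH} gives $\tnu = 2\nu - 1 - 4\varepsilon/\gamma = (\gamma - 4\varepsilon)/\gamma$. Substituting these into the same formula for $\eta$ and again extracting the powers of $\gamma$ (the stray $\gamma$ from the prefactor cancels the $1/\gamma$ from the bracket) reproduces the second displayed expression; bracket positivity, after clearing $\gamma-\varepsilon$, is precisely $\varepsilon < (1-\rho)(\gamma-\varepsilon)(\gamma-4\varepsilon)/\bigl((1+2C)\gamma - 2C\varepsilon\bigr)$.

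The computations are routine, so the only real care is bookkeeping: I must confirm that each single stated bound on $\varepsilon$ is self-consistent and implies every prerequisite. For the right-hand side of each bound to be positive one needs $\nu\gamma - 2\varepsilon>0$ (acute) or $\gamma - 4\varepsilon>0$ ($\nu=1$), so requiring $\varepsilon$ to lie below that right-hand side automatically forces $\tnu>0$ and the domain restrictions $\varepsilon<\gamma\nu/2$ and $\varepsilon<\gamma/4$ demanded by \cref{lemma:null_space_tH} and \cref{cond:stable}. I would also note that each right-hand side is decreasing in $\varepsilon$ while the left-hand side is increasing, so the implicit inequality has a well-defined threshold and is non-vacuous. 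With $\tnu>0$ and the bracket strictly positive, both factors in $\eta$ are strictly positive, and the upper bound $\eta\le1$ is inherited from \cref{thm:exact}, giving $\eta\in(0,1]$.
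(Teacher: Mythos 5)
Your proposal is correct and follows exactly the route the paper intends (and omits): substituting the acute-case and $\nu=1$ values of $\tgamma$ and $\tnu$ from \cref{lemma:pseudo_regularity_tH,lemma:null_space_tH} into the expression for $\eta$ in \cref{thm:exact}, and reducing positivity of the bracket to the stated bounds on $\varepsilon$ --- the same computation the paper alludes to when it remarks that the proof of \cref{coro:exact_nuis0} ``amounts to finding conditions for which $(1-\rho)\tnu > \varepsilon/\tgamma$.'' Your algebra in both regimes checks out, including the cancellation of the powers of $\gamma$ and the observation that each stated $\varepsilon$-bound self-consistently forces $\tnu>0$, while the upper bound $\eta\le 1$ is correctly inherited from \cref{thm:exact}.
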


\begin{remark}
	\label{rem:exact_actue}
	\vspace*{-2mm}
	For acute perturbations, we see from \cref{coro:exact_nuis1} that the convergence rate in the limit where $ \varepsilon \downarrow 0 $, matches that of unperturbed algorithm in \cite{roosta2018newton}.  For the case where $ \nu =1 $ but the perturbation is not acute, the limiting rate is worse than the unperturbed algorithm, and we believe that this is simply a byproduct of our analysis here.
\end{remark}

In the analysis of Newton's method, local convergence rate, i.e., convergence speed in the vicinity of a local solution, plays a critical role. There, by considering $ \alphak = 1 $, one can obtain fast problem-independent local convergence rates \cite{ssn2018}. Here, we aim to do the same for \cref{alg:Newton_invex_sub}. We note that the notion of ``$ \xx $ being local'' in the context of Newton-MR amounts to ``$\vnorm{\bgg(\xx)} $ being small enough'' \cite{roosta2018newton}. Obtaining a recursive behavior for $ \vnorm{\bgg} $ underpins our results here.
\begin{theorem}[\cref{alg:Newton_invex_sub} With $ \alphak = 1 $ and Exact Updates]
	\label{thm:exact_err}
	\vspace*{-2mm}
	Under the assumptions of \cref{thm:exact} with \cref{assmpt:lipschitz_special} replaced with \cref{eq:lip_usual_hessian}, for the iterates of \cref{alg:Newton_invex_sub} with $ \alphak = 1 $ and exact update, we have
	\begin{align*}
		\vnorm{ \bgg(\xx_{k+1}) } \leq c_1 \vnorm{\bgg(\xxk)}^{2}  + c_2 \vnorm{\bgg(\xxk)},
	\end{align*}
	where 
	\begin{align*}
		c_1 \defeq \frac{L_{\HH}}{2 \tgamma^{2}}, \quad \text{and} \quad c_2 \defeq \left( \frac{\varepsilon}{\tgamma} + \sqrt{1 - \tnu} \right) 
	\end{align*}
	and $ L_{\HH}, \tnu$ and $\tgamma$ are, respectively, as in \cref{eq:lip_usual_hessian,lemma:pseudo_regularity_tH,lemma:null_space_tH}.
\end{theorem}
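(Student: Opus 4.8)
The plan is to mimic the standard local-convergence argument for Newton-type methods based on the integral (mean-value) form of the gradient remainder, but to replace the exact Hessian and its pseudo-inverse by their perturbed counterparts, absorbing the resulting discrepancy through the two perturbation results already established, namely \cref{lemma:pseudo_regularity_tH,lemma:null_space_tH}.

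First, since we now assume Hessian Lipschitz continuity \cref{eq:lip_usual_hessian} in place of moral-smoothness, I would expand the gradient at the new iterate via the fundamental theorem of calculus,
$$\bgg(\xxkk) = \bggk + \int_0^1 \HH(\xxk + t\ppk)\,\ppk\,\df t = \bggk + \HHk \ppk + \int_0^1 \bigl(\HH(\xxk + t\ppk) - \HHk\bigr)\ppk\,\df t,$$
where $\ppk = -\tHHdk \bggk$ is the exact perturbed update with $ \alphak = 1 $. The last integral is the curvature remainder: using \cref{eq:lip_usual_hessian} its norm is at most $\int_0^1 L_{\HH}\, t\,\df t \cdot \vnorm{\ppk}^2 = (L_{\HH}/2)\vnorm{\ppk}^2$, and then \cref{lemma:pseudo_regularity_tH} (giving $\vnorm{\tHHdk\bggk} \le \vnorm{\bggk}/\tgamma$) turns this into the quadratic term $c_1\vnorm{\bggk}^2$.

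The heart of the argument is the linearized residual $\bggk + \HHk\ppk = \bggk - \HHk\tHHdk\bggk$. Here the key manipulation is an add-and-subtract that swaps $\HHk$ for $\tHHk$: rewrite it as $(\eye - \tHHk\tHHdk)\bggk + (\tHHk - \HHk)\tHHdk\bggk$. Because $\tHHk\tHHdk = \tUU\tUUt$ is the orthogonal projector onto $\range(\tHHk)$, the first piece equals $\tUU_{\perp}\tUU_{\perp}^{\intercal}\bggk$, whose norm is controlled by the perturbed null-space bound \cref{eq:null_space_grad_pert_02} as $\sqrt{1-\tnu}\,\vnorm{\bggk}$. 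The second piece is bounded using $\vnorm{\tHHk - \HHk} = \vnorm{\EE} \le \varepsilon$ together with \cref{lemma:pseudo_regularity_tH}, yielding $(\varepsilon/\tgamma)\vnorm{\bggk}$. Summing the two gives exactly $c_2\vnorm{\bggk}$, and a final triangle inequality combining the linear and quadratic contributions delivers the claim.

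I expect the only genuinely delicate point to be the decision to route the residual through the \emph{perturbed} range space $\range(\tHHk)$ rather than the exact one: it is precisely this choice that lets us invoke \cref{lemma:null_space_tH}, which already absorbs the perturbation of the range space, and \cref{lemma:pseudo_regularity_tH}, which controls the possibly unbounded $\vnorm{\tHHdk}$ only along the gradient direction. Everything else is routine, and no case split between acute and non-acute perturbations is needed here, since both lemmas are stated uniformly in terms of $\tnu$ and $\tgamma$.
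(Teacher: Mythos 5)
Your proposal is correct and follows essentially the same route as the paper: both arguments use the integral (mean-value) representation of $\bgg(\xx_{k+1})$, route the residual through the perturbed range space so that $(\eye - \tHHk\tHHdk)\bggk = \tUU_{\perp}\tUU_{\perp}^{\intercal}\bggk$ is controlled by \cref{eq:null_space_grad_pert_02}, absorb the Hessian error $(\tHHk - \HHk)\tHHdk\bggk$ via $\vnorm{\EE}\leq\varepsilon$ and \cref{lemma:pseudo_regularity_tH}, and bound the curvature remainder by $L_{\HH}\vnorm{\ppk}^{2}/2$. The only difference is bookkeeping order — the paper peels off the $\tUU_{\perp}$ component first and keeps the perturbation term inside the integral, while you extract the Newton residual $\bggk + \HHk\ppk$ first — but the three resulting bounds are identical.
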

\begin{proof}
	With $ \alphak = 1 $, we can apply \cref{lemma:null_space_tH} and the mean-value theorem \cite{ciarlet2013linear} for vector-valued functions to get
	\begin{align*}
	\vnorm{ \bgg(\xx_{k+1}) } &= \vnorm{ \bgg(\xxk + \ppk)} = \vnorm{\bgg(\xxk) + \int_{0}^{1} \HH \left(\xxk + t \ppk \right) \ppk  \df t}\\
	& =\vnorm{\left( \tUU \tUUt + \tUU_\perp \tUU^\intercal_\perp \right) \bgg(\xxk) + \int_{0}^{1} \HH \left(\xxk + t \ppk \right) \ppk \df t}\\
	& \leq \vnorm{\tHH(\xxk) \tHHd(\xxk) \bgg(\xxk) + \int_{0}^{1} \HH \left(\xxk + t \ppk \right) \ppk \df t} + \vnorm{\tUU_\perp \tUU^\intercal_\perp \bgg(\xxk)}\\
	& \leq \frac{1}{\tgamma} \vnorm{\bgg(\xxk)} \int_{0}^{1} \vnorm{\HH\left(\xxk + t \ppk \right) - \tHH(\xxk)} \df t + \sqrt{1 - \tnu} \vnorm{\bgg(\xxk)} \\
	& \leq \frac{1}{\tgamma} \vnorm{\bgg(\xxk)} \int_{0}^{1} \vnorm{\HH\left(\xxk + t \ppk \right) - \HH(\xxk)} \df t + \frac{\varepsilon}{\tgamma} \vnorm{\bgg(\xxk)} + \sqrt{1 - \tnu} \vnorm{\bgg(\xxk)} \\
	& \leq \frac{L_{\HH}}{2 \tgamma^{2}} \vnorm{\bgg(\xxk)}^{2}  + \left( \frac{\varepsilon}{\tgamma} + \sqrt{1 - \tnu} \right) \vnorm{\bgg(\xxk)}
	\end{align*}
\end{proof}

Here also as in \cref{thm:exact}, unless $ c_{2} < 1 $, one cannot establish local convergence using \cref{thm:exact_err}. We now show that for inherently stable perturbations, we can indeed guarantee this. For general perturbations, we note that a similar results as in \cref{coro:exact_nuis0} can also be obtained in the context of \cref{thm:exact_err}. However, for the sake of simplicity, we opt to omit them here.

\begin{corollary}[\cref{alg:Newton_invex_sub} With $ \alphak = 1 $ and Exact Updates Under Inherent Stability]
	\label{coro:exact_err}
	\vspace*{-2mm}
	Under the assumptions of \cref{thm:exact_err}, if the following conditions hold, we have $ c_2 < 1$:
	\begin{itemize}
		\smallskip \item if the perturbation is acute and $ \varepsilon $ is small enough such that 
		\begin{align*}
		\varepsilon < (\gamma-\varepsilon) \left( 1 - \sqrt{1 - \left(\nu-{2\varepsilon}/{\gamma}\right)}\right),
		\end{align*}
		then
		\begin{align*}
			c_{1} &= \frac{L_{\HH}}{2 (\gamma-\varepsilon)^{2}}, \quad c_{2} = \frac{\varepsilon}{(\gamma-\varepsilon)} + \sqrt{1 - \left(\nu-\frac{2\varepsilon}{\gamma}\right)} < 1,
		\end{align*}
		\smallskip \item otherwise if $ \nu =1 $ and $ \varepsilon $ is small enough such that 
		\begin{align*}
		\varepsilon < \left(\gamma-\varepsilon\right) \left( 1 -2 \sqrt{{\varepsilon}/{\gamma}} \right)/(1+2C),
		\end{align*}
		then	
		\begin{align*}
			c_{1} &= \frac{\left((1+2C)\gamma - 2C\varepsilon \right)^{2} L_{\HH}}{2 \gamma^{2}\left(\gamma-\varepsilon\right)^{2}}, \quad c_{2} = \frac{\left((1+2C)\gamma - 2C\varepsilon \right) \varepsilon}{\gamma\left(\gamma-\varepsilon\right)} + 2 \sqrt{\frac{\varepsilon}{\gamma}} < 1.
		\end{align*}
	\end{itemize}
	Here, $ L_{\HH}, \gamma , \nu $ and $ C $ are as in \cref{eq:lip_usual_hessian}, \cref{eq:epsilon_02,eq:pseudo_regularity_H,eq:null_space}, respectively.
\end{corollary}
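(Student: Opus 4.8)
The plan is to specialize the abstract recursion of \cref{thm:exact_err}, in which $c_1 = L_{\HH}/(2\tgamma^2)$ and $c_2 = \varepsilon/\tgamma + \sqrt{1-\tnu}$, by inserting the regime-specific values of $\tgamma$ and $\tnu$ delivered by \cref{lemma:pseudo_regularity_tH,lemma:null_space_tH}, and then rewriting the requirement $c_2 < 1$ as an explicit upper bound on $\varepsilon$. Throughout I would retain the standing hypotheses inherited from \cref{thm:exact_err} and \cref{cond:stable} (namely $\varepsilon < \gamma\nu/2$ in the acute regime and $\varepsilon < \gamma(2\nu-1)/4$ in the general regime) so that $\tgamma$ and $\tnu$ are positive and every radicand is nonnegative.

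For the acute perturbation, \cref{lemma:pseudo_regularity_tH} gives $\tgamma = \gamma - \varepsilon$ and \cref{lemma:null_space_tH} gives $\tnu = \nu - 2\varepsilon/\gamma$; substituting these reproduces the stated $c_1$ and $c_2$ verbatim. I would then treat $c_2 < 1$ as an equivalence rather than merely a sufficient condition: isolating the radical yields $\sqrt{1-\tnu} < 1 - \varepsilon/(\gamma-\varepsilon)$, and since $\tnu > 0$ (guaranteed by $\varepsilon < \gamma\nu/2$) the left side is strictly below $1$, so no squaring is required. Clearing the positive denominator $\gamma-\varepsilon$ then produces exactly the displayed threshold $\varepsilon < (\gamma-\varepsilon)\bigl(1 - \sqrt{1-(\nu - 2\varepsilon/\gamma)}\bigr)$. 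This case is pure algebraic rearrangement with no real obstacle.

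For the $\nu = 1$ (non-acute) case, I would first evaluate the general $\tgamma$ with $\sqrt{1-\nu}=0$, which collapses it to $1/\tgamma = 1/(\gamma-\varepsilon) + 2C/\gamma = \bigl((1+2C)\gamma - 2C\varepsilon\bigr)/\bigl(\gamma(\gamma-\varepsilon)\bigr)$, and evaluate the general $\tnu = 2\nu - 1 - 4\varepsilon/\gamma = 1 - 4\varepsilon/\gamma$, so that $\sqrt{1-\tnu} = 2\sqrt{\varepsilon/\gamma}$; these match the stated $c_1$ and $c_2$. The decisive simplification is the clean splitting $\varepsilon/\tgamma = \varepsilon/(\gamma-\varepsilon) + 2C\varepsilon/\gamma$, which turns $c_2$ into $\varepsilon/(\gamma-\varepsilon) + 2C\varepsilon/\gamma + 2\sqrt{\varepsilon/\gamma}$.

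The only mild obstacle is then to avoid carrying the ungainly factor $(1+2C)\gamma - 2C\varepsilon$ into the final inequality. I would use the crude but adequate bound $2C\varepsilon/\gamma \leq 2C\varepsilon/(\gamma-\varepsilon)$ to merge the first two terms into $(1+2C)\varepsilon/(\gamma-\varepsilon)$, giving $c_2 \leq (1+2C)\varepsilon/(\gamma-\varepsilon) + 2\sqrt{\varepsilon/\gamma}$. Requiring this bound to be strictly below $1$ rearranges to $(1+2C)\varepsilon < (\gamma-\varepsilon)(1 - 2\sqrt{\varepsilon/\gamma})$, precisely the stated sufficient condition, whose right-hand side is positive exactly when $\varepsilon < \gamma/4$ — a constraint the threshold itself enforces. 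Unlike the acute case, here the condition is only sufficient rather than tight, the slack arising solely from the bound on the middle term, but that is all the corollary asserts.
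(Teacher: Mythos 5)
Your proposal is correct and follows exactly the derivation the paper intends (the paper states \cref{coro:exact_err} without an explicit proof): substitute the regime-specific $\tgamma$ and $\tnu$ from \cref{lemma:pseudo_regularity_tH,lemma:null_space_tH} into the $c_1, c_2$ of \cref{thm:exact_err}, then rearrange $c_2 < 1$, which is an exact equivalence in the acute case and, after the harmless bound $2C\varepsilon/\gamma \leq 2C\varepsilon/(\gamma-\varepsilon)$, a sufficient condition in the $\nu = 1$ case. All substitutions and rearrangements check out, including the observation that the $\nu=1$ threshold implicitly forces $\varepsilon < \gamma/4$.
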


\begin{remark}
	\label{rem:exact_local}
	\vspace*{-2mm}
	\Cref{coro:exact_err} shows that, under inherent stability and for small $ \varepsilon $, we obtain a problem-independent local linear convergence rate. For example, consider any $ \varepsilon $ small enough for which we get $ c_{2} < 1 $. Then for any $ c_{2} < c < 1  $, there exists a $ r > 0 $ for which if $ \vnorm{\bggk} \leq r $, we have $ \vnorm{\bgg_{k+1}} \leq c\vnorm{\bggk} $. 
	More generally, however, as $ \varepsilon \downarrow 0 $, since $ \tgamma \downarrow 0 $, we can get $ c_2 > 1 $ in \cref{thm:exact_err}, which can amount to divergence of the algorithm with constant step-size of $ \alphak = 1 $.
\end{remark}

\subsubsection{Inexact Updates}
\label{sec:inexact}

We now turn to convergence analysis of Newton-MR using inexact update \cref{eq:least_norm_solution_relaxed}. Clearly, the inexactness tolerance $ \theta $ has to be chosen with regard to \cref{lemma:null_space_tH}. Indeed, suppose the conditions of \cref{lemma:null_space_tH} are satisfied. With exact solution to \cref{eq:least_norm_solution}, we have
\begin{align*}
\dotprod{\bggk, \tHHk \ppk + \bggk} &= - \dotprod{\bggk, \tHHk \tHHdk \bggk} + \vnorm{\bggk}^{2} \leq (1-\tnu)\vnorm{\bggk}^{2},
\end{align*}
where $ \tnu $ is defined in \cref{lemma:null_space_tH}. This in turn implies that it is sufficient to choose $ \theta $ such that $ \theta \leq 1-\tnu$, giving rise to the following condition in inexactness tolerance. 
\begin{condition}[Inexactness Tolerance]
	\label{cond:theta}
	\vspace*{-2mm}
	The inexactness tolerance, $ \theta $, in \cref{eq:least_norm_solution_relaxed} is chosen such that $ 1 - \tnu \leq \theta < 1 $ where $ \tnu $ is defined in \cref{lemma:null_space_tH}.
\end{condition}

As advocated in \cite{roosta2018newton}, due to several desirable advantages, MINRES-QLP \cite{choi2011minres} is the method of choice for inexact variant of Newton-MR in which the search direction is computed from \cref{eq:least_norm_solution_relaxed}. Recall that, at the $ k\th $ iteration of \cref{alg:Newton_invex_sub}, the $ t\th $ iteration of MINRES-QLP can be represented as
\begin{align}
\label{eq:MINRES-QLP-SubProb}
\pp_{k}^{(t)} = \argmin \vnorm{\pp}^2, \quad \text{subject to} \quad \pp \in \Argmin_{\hat{\pp} \in \mathcal{K}_{t}} \vnorm{\tHHk \widehat{\pp} + \bggk}^2,
\end{align}
where $ \mathcal{K}_{t} = \mathcal{K}_{t}(\tHHk, \bggk) $ or $ \mathcal{K}_{t} = \mathcal{K}_{t}(\tHHk, \tHHk \bggk)  $.

Before delving deeper into the analysis of this section, we first give some simple properties of solutions to \cref{eq:least_norm_solution_relaxed} obtained from MINRES-QLP.

\begin{lemma}
	\vspace*{-2mm}
	\label{lemma:inexact_residual}
	For any solution to \cref{eq:least_norm_solution_relaxed} obtained from MINRES-QLP, we have 
	\begin{subequations}
		\label{eq:inexact_residual}
		\begin{align}
		\vnorm{\tHHk \ppk} &\leq \vnorm{\bggk}, \label{eq:inexact_residual_1}\\
		\vnorm{\tHHk \ppk + \bggk} &\leq \sqrt{\theta} \vnorm{\bggk}. \label{eq:inexact_residual_2}
		\end{align}
	\end{subequations}
\end{lemma}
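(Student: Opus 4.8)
The plan is to exploit the defining minimal-residual property of MINRES-QLP, namely that the residual of the computed iterate is orthogonal to $\tHHk$ applied to the underlying Krylov subspace. Writing the residual as $\rr_{k} \triangleq \tHHk \ppk + \bggk$, the fact that $\ppk$ minimizes $\vnorm{\tHHk \widehat{\pp} + \bggk}$ over $\widehat{\pp} \in \mathcal{K}_{t}$ means, by the normal equations for this least-squares problem, that $\rr_{k}$ is orthogonal to $\tHHk \mathcal{K}_{t}$; in particular, since $\ppk \in \mathcal{K}_{t}$ we have $\tHHk \ppk \in \tHHk \mathcal{K}_{t}$ and hence $\dotprod{\rr_{k}, \tHHk \ppk} = 0$. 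The least-norm tie-breaking of MINRES-QLP does not affect this, since the residual $\rr_{k}$—equivalently the orthogonal projection $\tHHk \ppk$ of $-\bggk$ onto $\tHHk \mathcal{K}_{t}$—is common to all residual-minimizers over $\mathcal{K}_{t}$.

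First I would establish \cref{eq:inexact_residual_1}. Since $\bggk = \rr_{k} - \tHHk \ppk$ decomposes $\bggk$ into two orthogonal pieces, the Pythagorean identity gives
\begin{align*}
\vnorm{\bggk}^{2} = \vnorm{\rr_{k}}^{2} + \vnorm{\tHHk \ppk}^{2} \geq \vnorm{\tHHk \ppk}^{2},
\end{align*}
which is exactly \cref{eq:inexact_residual_1}.

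Next, for \cref{eq:inexact_residual_2}, I would feed the inexactness requirement of \cref{eq:least_norm_solution_relaxed} into the same orthogonality. Using the symmetry of $\tHHk$ together with $\tHHk \ppk = \rr_{k} - \bggk$,
\begin{align*}
\dotprod{\ppk, \tHHk \bggk} = \dotprod{\tHHk \ppk, \bggk} = \dotprod{\rr_{k} - \bggk, \bggk} = \dotprod{\rr_{k}, \bggk} - \vnorm{\bggk}^{2},
\end{align*}
while the orthogonality $\dotprod{\rr_{k}, \tHHk \ppk} = 0$ yields $\dotprod{\rr_{k}, \bggk} = \dotprod{\rr_{k}, \rr_{k} - \tHHk \ppk} = \vnorm{\rr_{k}}^{2}$. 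Hence $\dotprod{\ppk, \tHHk \bggk} = \vnorm{\rr_{k}}^{2} - \vnorm{\bggk}^{2}$, and the constraint $\dotprod{\ppk, \tHHk \bggk} \leq -(1-\theta)\vnorm{\bggk}^{2}$ in \cref{eq:least_norm_solution_relaxed} rearranges to $\vnorm{\rr_{k}}^{2} \leq \theta \vnorm{\bggk}^{2}$, which is \cref{eq:inexact_residual_2}.

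The only delicate point—and the one I would state most carefully—is the orthogonality $\rr_{k} \perp \tHHk \mathcal{K}_{t}$ for the \emph{specific} vector returned by MINRES-QLP, covering both admissible choices $\mathcal{K}_{t} = \mathcal{K}_{t}(\tHHk, \bggk)$ and $\mathcal{K}_{t} = \mathcal{K}_{t}(\tHHk, \tHHk \bggk)$, as well as the possibility that the residual-minimizer over $\mathcal{K}_{t}$ is non-unique. Both conclusions are driven entirely by this projection property rather than by the value of $\ppk$ itself, so neither the choice of generating vector nor the least-norm selection alters the argument; everything else reduces to a one-line orthogonal decomposition.
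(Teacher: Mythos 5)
Your proof is correct, and for \cref{eq:inexact_residual_2} it essentially coincides with the paper's argument: both combine the constraint in \cref{eq:least_norm_solution_relaxed} with the orthogonality $\dotprod{\tHHk \ppk, \tHHk \ppk + \bggk} = 0$ (the paper cites \cite[Lemma 3.3]{choi2011minres} for this identity, while you derive it from the normal equations of the least-squares problem over $\mathcal{K}_{t}$ in \cref{eq:MINRES-QLP-SubProb}; both are valid, and you rightly note that the residual, and hence this orthogonality, is shared by all residual minimizers, so the least-norm tie-breaking of MINRES-QLP is immaterial). Where you genuinely diverge is \cref{eq:inexact_residual_1}. The paper proves it by invoking the monotone non-decrease of $t \mapsto \vnorm{\tHHk \pp_{k}^{(t)}}$ along MINRES-QLP iterations (\cite[Lemma 3.3 and Section 6.6]{choi2011minres}), which bounds every iterate by the limiting pseudo-inverse solution, $\vnorm{\tHHk \ppk} \leq \vnorm{\tHHk \tHHdk \bggk} = \vnorm{\tUU \tUUt \bggk} \leq \vnorm{\bggk}$. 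You instead obtain it from the same orthogonality already needed for \cref{eq:inexact_residual_2}: writing $\rr_{k} = \tHHk \ppk + \bggk$ and using $\rr_{k} \perp \tHHk \ppk$, Pythagoras gives the exact identity $\vnorm{\bggk}^{2} = \vnorm{\rr_{k}}^{2} + \vnorm{\tHHk \ppk}^{2}$, of which \cref{eq:inexact_residual_1} is an immediate consequence. Your route is more elementary and self-contained: it requires no monotonicity result from the MINRES-QLP literature, applies verbatim to any residual minimizer over any subspace (so both admissible Krylov spaces are covered at once), and actually yields a slightly stronger statement---an exact orthogonal decomposition of $\vnorm{\bggk}^{2}$ that ties the two claims of the lemma together. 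What the paper's route buys is a bound expressed through the exact solution $\tHHdk \bggk$, making explicit that the whole MINRES-QLP trajectory of $\vnorm{\tHHk \pp_{k}^{(t)}}$ increases toward that limit, a perspective consistent with its later discussion of intermediate iterates.
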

\begin{proof} 
	It has been shown in \cite[Lemma 3.3 and Section 6.6]{choi2011minres} that for $ \pp_{k}^{(t)} $ as in \cref{eq:MINRES-QLP-SubProb}, $ \| \tHHk \pp_{k}^{(t)} \| $ is monotonically non-decreasing with $ t $. As a result, we obtain $ \| \tHHk \pp_{k} \| \leq \| \tHHk \tHHdk \bggk \| = \| \tUU \tUUt \bggk \| \leq \| \bggk \| $. Also, from \cref{eq:MINRES-QLP-SubProb}  and \cite[Lemma 3.3]{choi2011minres}, we always have $\dotprod{\pp_{k}, \tHHk \left( \tHHk \ppk + \bggk \right)} = 0$. Now, from \cref{eq:least_norm_solution_relaxed} we get \cref{eq:inexact_residual_2} as
	\begin{align*}
	    \theta \vnorm{\bggk}^{2} \geq \dotprod{\bggk, \tHHk \ppk + \bggk} = \dotprod{\bggk, \tHHk \ppk + \bggk} + \overbrace{\dotprod{\tHHk \ppk, \left( \tHHk \ppk + \bggk \right)}}^{= 0} = \vnorm{\tHHk \ppk + \bggk}^{2}.
	\end{align*}
\end{proof}

Here, as in \Cref{sec:exact}, establishing the convergence of \cref{alg:Newton_invex_sub} using \cref{eq:least_norm_solution_relaxed} hinges upon obtaining a bound similar to that in \cref{lemma:pseudo_regularity_tH}, but in terms of $ \ppk $ from \cref{eq:least_norm_solution_relaxed}.  A na\"{i}ve application of \cref{eq:inexact_residual_1,eq:epsilon_02} gives 
\begin{align*}
\vnorm{\ppk} \leq \frac{C}{\varepsilon} \vnorm{\tHHk \ppk} \leq \frac{C}{\varepsilon} \vnorm{\bggk},
\end{align*}
which implies the search direction can become unbounded as $ \varepsilon \downarrow 0 $.
Unfortunately, the norms of the iterates of MINRES-QLP are not necessarily monotonic; see \cite{calvetti2000curve,fong2012cg,choi2011minres}. As a result, although by \cref{lemma:pseudo_regularity_tH}, we have an upper bound on the final iterate, i.e., the exact solution \cref{eq:newton_mr_pseudo}, the intermediate iterates from \cref{eq:MINRES-QLP-SubProb} may have larger norms. Nonetheless, as part of the results of this section, we show that indeed all iterates of MINRES-QLP from \cref{eq:MINRES-QLP-SubProb} are bounded in the same way as in \cref{lemma:pseudo_regularity_tH}, which can be of independent interest; see \cref{lemma:solution_norm}.

To achieve this, we first show that \cref{eq:MINRES-QLP-SubProb} can be decoupled into two separate constrained least squares problems. We then show that the solution to each of these least squares problems is indeed bounded. 

\begin{lemma}
	\label{lemma:decoupling}
	\vspace*{-2mm}
	For any symmetric matrix $ \AA \in \reals^{d \times d}$ and $ \bb \in \reals^{d} $, consider the problem
	\begin{align}
	\label{eq:ols_full}
	\xxs =  \argmin \vnorm{\xx}^2 \quad \text{s.t.} \quad \xx \in \Argmin_{\widehat{\xx} \in \mathcal{K}_{t}} \vnorm{\AA \widehat{\xx} - \bb}^2,
	\end{align}
	where $ \mathcal{K}_{t} $ is any Krylov subspace.
	Let $ \PP_1$ and $\PP_2 $ be orthogonal projectors onto $\AA$-invariant subspaces. Further, assume that $ \PP_1 \PP_2 = \PP_2 \PP_1 = \bm{0} $ and $ \range(\AA) = \range (\PP_1)  \oplus \range(\PP_2 )$, where $ \oplus  $ denotes the direct sum. We have
	\begin{align}
	\label{eq:ols_decoupled}
	\xxs &=  \argmin_{ \xx_{1} \in \PP_{1} \cdot \mathcal{K}_{t}} \vnorm{\AA \xx_{1} - \PP_{1}\bb}^2 + \argmin_{\xx_{2} \in \PP_{2} \cdot \mathcal{K}_{t}} \vnorm{\AA \xx_{2} - \PP_{2} \bb}^2.
	\end{align}
	
\end{lemma}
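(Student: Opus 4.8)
The plan is to exploit that $\PP_1$ and $\PP_2$, being orthogonal projectors onto $\AA$-invariant subspaces of a \emph{symmetric} $\AA$, commute with $\AA$, and to use this to split both the objective and the feasible set of \cref{eq:ols_full} along $\range(\PP_1)$ and $\range(\PP_2)$. First I would record the elementary facts. Since $\range(\PP_i)$ is $\AA$-invariant and $\AA = \AA^\intercal$, its orthogonal complement is invariant too, so $\AA\PP_i = \PP_i\AA$ for $i=1,2$. Writing $\PP_0 \defeq \eye - \PP_1 - \PP_2$ for the orthogonal projector onto $\Null(\AA)$ --- which is legitimate because $\range(\AA)=\range(\PP_1)\oplus\range(\PP_2)$ together with $\PP_1\PP_2=\PP_2\PP_1=\zero$ make $\{\PP_0,\PP_1,\PP_2\}$ a complete set of mutually orthogonal projectors --- I note that for any $\xx$ we have $\AA\xx\in\range(\AA)$, hence $\PP_0\AA\xx = \zero$, while commutativity gives $\PP_i\AA\xx = \AA\PP_i\xx$. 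The Pythagorean theorem applied to the orthogonal decomposition induced by $\PP_0,\PP_1,\PP_2$ then yields
\begin{align*}
\vnorm{\AA\xx - \bb}^2 = \vnorm{\AA\PP_1\xx - \PP_1\bb}^2 + \vnorm{\AA\PP_2\xx - \PP_2\bb}^2 + \vnorm{\PP_0\bb}^2,
\end{align*}
where the last term is constant, so minimizing the residual over $\mathcal{K}_{t}$ reduces to jointly minimizing the first two terms.

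Next I would decouple the feasible set. Because $\PP_i$ commutes with $\AA$, one has $\PP_i\cdot\mathcal{K}_{t} = \mathcal{K}_{t}(\AA, \PP_i\bb)\subseteq\range(\PP_i)$, so the two projected search spaces are orthogonal and the $i$-th term above depends on $\xx$ only through $\PP_i\xx\in\PP_i\cdot\mathcal{K}_{t}$. The structural claim I would isolate is that these two components can be varied \emph{independently}, i.e.\ that $\mathcal{K}_{t}$ admits the orthogonal decomposition $\mathcal{K}_{t} = (\PP_1\cdot\mathcal{K}_{t})\oplus(\PP_2\cdot\mathcal{K}_{t})\oplus(\PP_0\cdot\mathcal{K}_{t})$, equivalently $\PP_i\cdot\mathcal{K}_{t}\subseteq\mathcal{K}_{t}$. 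Granting this, the joint minimization splits into the two independent least-squares problems in \cref{eq:ols_decoupled}. I would then confirm via the normal equations that the projected pieces of the coupled solution are the right objects: the stationarity condition for \cref{eq:ols_full}, namely $\dotprod{\AA(\AA\xxs - \bb), \vv} = 0$ for all $\vv\in\mathcal{K}_{t}$, restricted to $\vv\in\PP_i\cdot\mathcal{K}_{t}\subseteq\mathcal{K}_{t}$ and combined with $\AA\PP_i = \PP_i\AA$, gives exactly $\dotprod{\AA(\AA\PP_i\xxs - \PP_i\bb),\ww} = 0$ for all $\ww\in\PP_i\cdot\mathcal{K}_{t}$, which is the stationarity condition for the $i$-th subproblem.

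For the least-norm assertion I would use that the minimum-length solution returned by MINRES-QLP lies in $\range(\AA)=\range(\PP_1)\oplus\range(\PP_2)$, so $\PP_0\xxs=\zero$ and $\xxs = \PP_1\xxs + \PP_2\xxs$ with $\vnorm{\xxs}^2 = \vnorm{\PP_1\xxs}^2 + \vnorm{\PP_2\xxs}^2$; minimality of this sum forces each $\PP_i\xxs$ to be the least-norm solution of its own subproblem, delivering \cref{eq:ols_decoupled}. The step I expect to be the main obstacle is precisely the containment $\PP_i\cdot\mathcal{K}_{t}\subseteq\mathcal{K}_{t}$. It encodes compatibility of the Krylov subspace with the spectral split induced by $\PP_1,\PP_2$, and since $\PP_i\bb$ is a \emph{spectral} projection of $\bb$ rather than a Krylov polynomial in $\AA$ applied to $\bb$, this inclusion is delicate: it is where the separation between the parts of the spectrum captured by $\PP_1$ and $\PP_2$ --- reflected in the disjoint singular-value blocks $\tSigma_1,\tSigma_2$ of \cref{eq:svd} --- must be exploited. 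I would therefore establish this decomposition of $\mathcal{K}_{t}$ as a standalone step, and only then assemble the three ingredients above.
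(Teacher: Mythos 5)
Your plan retraces the paper's own proof of this lemma --- commutativity $\AA\PP_i=\PP_i\AA$, a Pythagorean split of $\vnorm{\AA\xx-\bb}^2$ along $\range(\PP_1)$, $\range(\PP_2)$ and $\Null(\AA)$, and then decoupling of the minimization --- with one notable difference: you explicitly isolate the claim that makes the decoupling legitimate, namely that the projected components of a vector ranging over $\mathcal{K}_{t}$ can be varied \emph{independently}, which you reduce to the containment $\PP_i\cdot\mathcal{K}_{t}\subseteq\mathcal{K}_{t}$. The paper never addresses this point: in its chain of equalities it silently replaces the single variable $\xx$ (whose components $\PP_1\xx$ and $\PP_2\xx$ are coupled through $\xx$) by two independent variables $\xx_1\in\PP_1\cdot\mathcal{K}_{t}$ and $\xx_2\in\PP_2\cdot\mathcal{K}_{t}$, i.e., it enlarges the feasible set from the image of the injective map $\xx\mapsto(\PP_1\xx,\PP_2\xx)$ to the full product. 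So you have put your finger exactly on the crux. But your proposal then defers that containment to ``a standalone step'' and never proves it, and this is a genuine gap: the containment is false for truncated Krylov subspaces. Take $\AA=\mathrm{diag}(1,2)$, $\bb=(1,1)^{\intercal}$, $\PP_1=\mathrm{diag}(1,0)$, $\PP_2=\mathrm{diag}(0,1)$ (orthogonal spectral projectors onto $\AA$-invariant subspaces with $\range(\AA)=\range(\PP_1)\oplus\range(\PP_2)$), and $t=1$, so that $\mathcal{K}_{1}(\AA,\bb)=\mathrm{span}\{(1,1)^{\intercal}\}$; then $\PP_1\bb=(1,0)^{\intercal}\notin\mathcal{K}_{1}$.

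Moreover, the failure is not an artifact of your particular reduction: in this same example the conclusion \cref{eq:ols_decoupled} itself fails, so no completion of your argument (or of the paper's) can exist at this level of generality. The coupled problem \cref{eq:ols_full} has the unique solution $\xxs=\tfrac{3}{5}(1,1)^{\intercal}$, while the two decoupled problems are solved by $(1,0)^{\intercal}$ and $(0,\tfrac{1}{2})^{\intercal}$, whose sum $(1,\tfrac{1}{2})^{\intercal}$ does not even lie in $\mathcal{K}_{1}$. What is true is the statement for an $\AA$-invariant Krylov subspace: when $t$ reaches the grade of $\bb$ (the first $t$ with $\mathcal{K}_{t+1}=\mathcal{K}_{t}$) and each $\PP_i$ is a spectral projector --- hence a polynomial $q_i(\AA)$, as is the case for the projectors $\tUU_1\tUU_1^{\intercal}$ and $\tUU_2\tUU_2^{\intercal}$ built from \cref{eq:svd}, whose eigenvalue groups are separated --- one gets $\PP_i\cdot\mathcal{K}_{t}=q_i(\AA)\,\mathcal{K}_{t}\subseteq\mathcal{K}_{t}$, and your normal-equations and least-norm arguments can then be assembled as you describe. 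For intermediate $t$, which is precisely the regime in which \cref{lemma:solution_norm} invokes this lemma, the decoupling requires a hypothesis that neither you nor the paper supplies. In short, your instinct that this inclusion is ``delicate'' was correct --- it is exactly where the paper's own proof is unjustified --- but as it stands the central claim of your proposal is unproven, and in the stated generality it is unprovable.
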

\begin{proof}
	First note that since $ \AA $ is symmetric and $ \PP_{i},  i=1,2  $, are the orthogonal projectors onto invariant subspaces of $ \AA $, we have $ \PP_{i} \AA = \AA \PP_{i},  i=1,2  $. 
	Let $ \PP = \PP_{1} + \PP_{2} $. By Pythagoras theorem we have
	\begin{align*}
	\vnorm{\AA \xx - \bb}^2 = \vnorm{\PP \AA \xx - \PP \bb}^2 + \vnorm{\left(\eye - \PP\right) \bb}^2.
	\end{align*}
	Noting that $ \xxs \in \range(\AA) $ (see \cite[Lemma 7]{roosta2018newton}), we can rewrite \cref{eq:ols_full} as
	\begin{align*}
	\xxs &=  \argmin_{\xx \in \PP \cdot \mathcal{K}_{t}} \vnorm{\AA \PP \xx - \PP \bb}^2.
	\end{align*}
	
	Defining $ \xx_{i} = \PP_{i} \xx, i = 1,2 $, for any $ \xx \in \range(\AA) $, we can write $ \xx = \xx_{1} + \xx_{2}$. Noting that $ \PP_{1} $ and $ \PP_{2} $ are orthogonal projections onto orthogonal subspaces, we have 
	\begin{align*}
	\xxs &=  \argmin_{ \substack{\xx_{1} \in \PP_{1} \cdot \mathcal{K}_{t} \\ \xx_{2} \in \PP_{2} \cdot \mathcal{K}_{t}} } \vnorm{\AA \left( \PP_{1}\xx_{1} + \PP_{2}\xx_{2} \right) - \left(\PP_{1} + \PP_{2}\right)\bb}^2 \\
	&=  \argmin_{ \substack{\xx_{1} \in \PP_{1} \cdot \mathcal{K}_{t} \\ \xx_{2} \in \PP_{2} \cdot \mathcal{K}_{t}} } \vnorm{\PP_{1} \AA \xx_{1} + \PP_{2} \AA\xx_{2} - \left(\PP_{1} + \PP_{2}\right)\bb}^2 \\
	&=  \argmin_{ \substack{\xx_{1} \in \PP_{1} \cdot \mathcal{K}_{t} \\ \xx_{2} \in \PP_{2} \cdot \mathcal{K}_{t}} } \left( \vnorm{\PP_{1} \AA \xx_{1} - \PP_{1}\bb}^2 + \vnorm{\PP_{2} \AA \xx_{2} - \PP_{2} \bb}^2 \right) \\
	&=  \argmin_{ \xx_{1} \in \PP_{1} \cdot \mathcal{K}_{t}} \vnorm{\AA \xx_{1} - \PP_{1}\bb}^2 + \argmin_{\xx_{2} \in \PP_{2} \cdot \mathcal{K}_{t}} \vnorm{\AA \xx_{2} - \PP_{2} \bb}^2.
	\end{align*}
\end{proof}

The following lemma gives a bound on the solution of each of decoupled terms in \cref{eq:ols_decoupled}.
\begin{lemma}
	\label{lemma:lanczos}
	\vspace*{-2mm}
	For any symmetric matrix $ \AA \in \reals^{d \times d}$ and $ \bb \in \reals^{d} $, consider the problem
	\begin{align}	
	\label{eq:x_OLS_variant_MR}
	\xxs \defeq \argmin_{\xx \in \PP \cdot \mathcal{K}_{t}} \vnorm{ \AA \xx - \PP\bb},
	\end{align} 
	where $ \PP $ is the orthogonal projector onto a $\AA$-invariant subspace and $ \mathcal{K}_{t} $ is $ \mathcal{K}_{t}(\AA,\bb) $ or $ \mathcal{K}_{t}(\AA,\AA \bb) $, for $ t \in \left\{1,2,\ldots,\rank(\AA\PP)\right\} $. We have 
	\begin{align*}
	\vnorm{\xxs} \leq  \vnorm{\PP \bb}  \vnorm{\left[ \AA \PP \right]^{\dagger}}.
	\end{align*}
\end{lemma}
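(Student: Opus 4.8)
The plan is to reduce the Krylov least-norm problem in \cref{eq:x_OLS_variant_MR} to a bound in terms of $\left[\AA\PP\right]^{\dagger}$ by exploiting that $\PP$, being the orthogonal projector onto an $\AA$-invariant subspace, commutes with $\AA$. The three ingredients are: (i) the minimizer $\xxs$ lies in $\range(\AA\PP)$; (ii) consequently $\xxs$ is recovered from its image $\AA\xxs$ via $\left[\AA\PP\right]^{\dagger}$, giving $\vnorm{\xxs}\leq\vnorm{\left[\AA\PP\right]^{\dagger}}\vnorm{\AA\xxs}$; and (iii) $\AA\xxs$ is an orthogonal projection of $\PP\bb$, hence $\vnorm{\AA\xxs}\leq\vnorm{\PP\bb}$. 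Chaining (ii) and (iii) delivers the claim.

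First I would record the structural facts: $\AA\PP = \PP\AA$, the matrix $\AA\PP$ is symmetric, and $\range(\eye - \PP)$ is also $\AA$-invariant. From these I would derive the identity $\range(\AA)\cap\range(\PP) = \range(\AA\PP)$; the inclusion $\supseteq$ is immediate from invariance, while for $\subseteq$ one writes any $\yy = \AA\xx\in\range(\PP)$, splits $\xx = \PP\xx + (\eye-\PP)\xx$, and uses that $\AA(\eye-\PP)\xx\in\range(\eye-\PP)$ must vanish since $\yy\in\range(\PP)$. For step (i), I would note that $\PP\cdot\mathcal{K}_t$ is itself a Krylov subspace generated by $\AA$: using $\PP\AA = \AA\PP$ one has $\PP\cdot\mathcal{K}_t(\AA,\bb) = \mathcal{K}_t(\AA,\PP\bb)$ and $\PP\cdot\mathcal{K}_t(\AA,\AA\bb) = \mathcal{K}_t(\AA,\AA\PP\bb)$, so that \cref{eq:x_OLS_variant_MR} is a least-norm Krylov least-squares problem with right-hand side $\PP\bb$. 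By \cite[Lemma 7]{roosta2018newton}, its least-norm solution lies in $\range(\AA)$; since trivially $\xxs\in\range(\PP)$, the identity above gives $\xxs\in\range(\AA\PP)$.

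With $\xxs\in\range(\AA\PP)$ in hand, step (ii) is immediate: because $\left[\AA\PP\right]^{\dagger}\AA\PP$ is the orthogonal projector onto $\range(\AA\PP)$ and $\PP\xxs = \xxs$, we get $\left[\AA\PP\right]^{\dagger}\AA\xxs = \left[\AA\PP\right]^{\dagger}\AA\PP\xxs = \xxs$, whence $\vnorm{\xxs}\leq\vnorm{\left[\AA\PP\right]^{\dagger}}\vnorm{\AA\xxs}$. For step (iii), I would observe that as $\xx$ ranges over the subspace $\PP\cdot\mathcal{K}_t$, the image $\AA\xx$ ranges over the subspace $\AA\cdot\left(\PP\cdot\mathcal{K}_t\right)$, so every minimizer of $\vnorm{\AA\xx - \PP\bb}$ shares the same image, namely the orthogonal projection of $\PP\bb$ onto that image subspace; consequently $\vnorm{\AA\xxs}\leq\vnorm{\PP\bb}$. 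Combining the two inequalities yields $\vnorm{\xxs}\leq\vnorm{\PP\bb}\vnorm{\left[\AA\PP\right]^{\dagger}}$.

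I expect the main obstacle to be step (i). The subtlety is that, for the variant $\mathcal{K}_t(\AA,\PP\bb)$, the Krylov subspace can carry a component of $\PP\bb$ along $\Null(\AA)\cap\range(\PP)$, so a naive minimizer need not lie in $\range(\AA)$; it is precisely the least-norm selection performed by MINRES-QLP, encapsulated in \cite[Lemma 7]{roosta2018newton}, that excises this null component and guarantees $\xxs\in\range(\AA)$ and hence $\xxs\in\range(\AA\PP)$. Everything else is a short, routine consequence of $\AA$ and $\PP$ commuting.
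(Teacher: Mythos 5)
Your route is genuinely different from the paper's, and most of it is sound: the paper tridiagonalizes $\AA\PP$ via the Lanczos process over $\PP\cdot\mathcal{K}_{t}=\mathcal{K}_{t}(\AA\PP,\PP\bb)$, writes $\xxs=\QQt\yys$ with $\yys=\TT_{t}^{\dagger}\QQ_{t+1}^{\intercal}\PP\bb$, and then invokes the monotonicity result \cite[Proposition 2.1]{calvetti2002gmres} to get $\vnorm{\TT_{t}^{\dagger}}\leq\vnorm{\left[\AA\PP\right]^{\dagger}}$, whereas you avoid all of that machinery; your steps (ii) and (iii) are correct as stated. The genuine gap is in step (i), exactly where you anticipated trouble, and your proposed resolution of it is wrong: least-norm selection does \emph{not} guarantee $\xxs\in\range(\AA)$. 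The least-norm minimizer differs from an arbitrary minimizer only by an element of $\Null(\AA)\cap\left(\PP\cdot\mathcal{K}_{t}\right)$, so it can only shed null-space vectors that lie \emph{entirely inside} the Krylov subspace; it cannot shed a null component of $\PP\bb$ that enters the subspace only coupled with range components. Concretely, take
\begin{align*}
\AA=\mathrm{diag}(1,2,0),\qquad \PP=\eye,\qquad \bb=(\delta,\delta,1)^{\intercal},\qquad t=2=\rank(\AA\PP),
\end{align*}
with $0<\delta\ll 1$. Here $\mathcal{K}_{2}(\AA,\bb)=\Span\{\bb,\AA\bb\}$ contains no nonzero vector of $\Null(\AA)$, so the minimizer of $\vnorm{\AA\xx-\bb}$ over it is unique and therefore \emph{is} the least-norm minimizer; yet it equals $\xxs=(\delta,\delta/2,3/2)^{\intercal}\notin\range(\AA)$. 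Worse, $\vnorm{\xxs}>3/2$ while $\vnorm{\PP\bb}\vnorm{\left[\AA\PP\right]^{\dagger}}=\sqrt{1+2\delta^{2}}$, so for this instance the conclusion of the lemma itself fails. No argument could have closed step (i) in the generality you attempt: the statement is simply false for $\mathcal{K}_{t}(\AA,\bb)$ when $\PP\bb\notin\range(\AA)$, and whatever \cite[Lemma 7]{roosta2018newton} asserts, it cannot deliver $\xxs\in\range(\AA)$ in that regime.

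The missing ingredient is the paper's range-invariance convention (\cref{def:krylov} and the standing assumption stated after it): $\mathcal{K}_{t}(\AA,\bb)$ is used only when $\bb\in\range(\AA)$, and otherwise one switches to $\mathcal{K}_{t}(\AA,\AA\bb)$. Under that convention, step (i) holds for a reason that has nothing to do with least-norm selection: in either case $\mathcal{K}_{t}\subseteq\range(\AA)$, and since $\PP\,\range(\AA)\subseteq\range(\AA\PP)$ by commutativity, \emph{every} feasible point of \cref{eq:x_OLS_variant_MR}, in particular $\xxs$, lies in $\range(\AA\PP)$. With that one substitution your proof closes completely, and it is then arguably preferable to the paper's: it is self-contained (no Lanczos decomposition, no appeal to Calvetti et al.) and makes transparent that the bound expresses invertibility of $\AA$ on $\range(\AA\PP)$ with inverse bounded by $\vnorm{\left[\AA\PP\right]^{\dagger}}$. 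Note that the same counterexample forces $\vnorm{\TT_{t}^{\dagger}}>\vnorm{\left[\AA\PP\right]^{\dagger}}$, so the paper's own chain of inequalities also tacitly requires this convention; it is not optional for either proof.
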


\begin{proof}
	Clearly, we can replace \cref{eq:x_OLS_variant_MR} with an equivalent formulation as
	\begin{align*}
	\xxs = \argmin_{\xx \in \PP \cdot \mathcal{K}_{t}} \vnorm{ \AA \PP \xx - \PP\bb}. 
	\end{align*}
	We prove the result for when $ \mathcal{K}_{t} = \mathcal{K}_{t}(\AA,\bb) $ as the case of $ \mathcal{K}_{t} = \mathcal{K}_{t}(\AA,\AA \bb) $ is proven similarly. As before, since $ \AA $ is symmetric and $ \PP $ is the orthogonal projector onto an invariant subspaces of $ \AA $, we have $ \PP \AA = \AA \PP$, hence $ \AA \PP $ is also symmetric. Consider applying Lanczos process to obtain the decomposition
	\begin{align*}
	\AA \PP \QQt = \QQ_{t+1} \TTt,
	\end{align*}
	where $ \TTt \in \reals^{(t+1) \times t} $ and $ \QQt = \left[ \qq_1, \qq_2, \ldots, \qq_{t} \right] $ is an orthonormal basis for the Krylov subspace $ \PP \cdot \mathcal{K}_{t} =  \mathcal{K}_{t}(\AA \PP,\PP\bb) $ and $ \QQ_{t+1} = \left[ \QQt \mid \qq_{t+1} \right] $ with $ \QQ_{t}^{\intercal} \qq_{t+1} = \bm{0} $. Recall that one can find $ \xxs = \QQt \yys $ where 
	\begin{align*}
	\yys \defeq \argmin_{\yy \in \reals^t} \vnorm{\TTt \yy - \QQ_{t+1}^{\intercal} \PP \bb}.
	\end{align*}
	It follows that
	\begin{align*}
	\vnorm{\xxs} = \vnorm{\QQt \yys} = \vnorm{\yys} = \vnorm{\TT_{t}^{\dagger} \QQ_{t+1}^{\intercal} \PP \bb}.
	\end{align*}
	Also, we have 
	\begin{align*}
	\vnorm{\TT_{t}^{\dagger}} = \vnorm{ \left[ \QQ_{t+1} \TTt \QQ_{t}^{\intercal} \right]^{\dagger}} \leq \vnorm{\left[ \QQ_{t+2} \TT_{t+1} \QQ_{t+1}^{\intercal}  \right]^{\dagger}} \leq \ldots \leq \vnorm{\left[ \AA \PP \right]^{\dagger}},
	\end{align*}
	where the first equality is obtained by noting that
	\begin{align*}
	\left[ \QQ_{t+1} \TTt \QQ_{t}^{\intercal} \right]^{\dagger} =  \left[\TTt \QQ_{t}^{\intercal}\right]^{\dagger} \left[\QQ_{t+1}\right]^{\intercal} = \QQt \TT_{t}^{\dagger} \QQ_{t+1}^{\intercal},
	\end{align*}
	and the series of inequalities follow from \cite[Proposition 2.1]{calvetti2002gmres}.
	So, we finally get
	\begin{align*}
	\vnorm{\xxs} = \vnorm{\TT_{t}^{\dagger} \QQ_{t+1}^{\intercal} \PP \bb} \leq \vnorm{\PP \bb}  \vnorm{\TT_{t}^{\dagger}} \leq \vnorm{\PP \bb}  \vnorm{\left[ \AA \PP \right]^{\dagger}}.
	\end{align*}
\end{proof}

We are now ready to prove a result similar to \cref{lemma:pseudo_regularity_tH} for the case of inexact updates.
\begin{lemma}
	\label{lemma:solution_norm}
	\vspace*{-2mm}
	Under Assumptions of \cref{lemma:pseudo_regularity_tH}, for the iterates of MINRES-QLP in \cref{eq:MINRES-QLP-SubProb}, we have
	\begin{align*}
	\vnorm{\pp_{k}^{(t)}} \leq \frac{1}{\tgamma} \vnorm{\bggk}, \quad t = 1,2,\ldots,\rank(\tHHk),
	\end{align*}
	where $ \tgamma $ is as in \cref{lemma:pseudo_regularity_tH}.
\end{lemma}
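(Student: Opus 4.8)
The plan is to bound \emph{every} MINRES-QLP iterate $\pp_k^{(t)}$ by splitting the subproblem \cref{eq:MINRES-QLP-SubProb} across the two $\tHHk$-invariant subspaces identified in \cref{eq:svd} and controlling each contribution separately. Concretely, I would invoke \cref{lemma:decoupling} with $\AA = \tHHk$, $\bb = -\bggk$, and the orthogonal projectors $\PP_1 \defeq \tUU_1\tUU_1^{\intercal}$ and $\PP_2 \defeq \tUU_2\tUU_2^{\intercal}$ onto $\range(\tUU_1)$ and $\range(\tUU_2)$, i.e. the eigenspaces of $\tHHk$ associated with its large singular values (those at least $\gamma-\varepsilon$, cf. \cref{eq:sigma_tU1}) and its small singular values (those in $[\varepsilon/C,\varepsilon]$, cf. \cref{eq:sigma_tU2}). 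Since $\tUU_1,\tUU_2$ have mutually orthonormal columns and $\range(\tHHk)=\range(\tUU_1)\oplus\range(\tUU_2)$, the hypotheses $\PP_1\PP_2=\PP_2\PP_1=\bm{0}$ and $\range(\tHHk)=\range(\PP_1)\oplus\range(\PP_2)$ hold, and both subspaces are $\tHHk$-invariant; the argument applies verbatim to either Krylov subspace allowed in \cref{eq:MINRES-QLP-SubProb}. \cref{lemma:decoupling} then writes $\pp_k^{(t)} = \xx_1^{\star}+\xx_2^{\star}$, where $\xx_i^{\star}$ is the minimizer in the $i$-th summand of \cref{eq:ols_decoupled}.

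Next I would bound each piece via \cref{lemma:lanczos}, giving $\vnorm{\xx_i^{\star}} \le \vnorm{\PP_i\bggk}\,\vnorm{[\tHHk\PP_i]^{\dagger}}$. For the first piece, $\tHHk\PP_1$ has nonzero singular values at least $\gamma-\varepsilon$, so $\vnorm{[\tHHk\PP_1]^{\dagger}} \le 1/(\gamma-\varepsilon)$, and with $\vnorm{\PP_1\bggk} = \vnorm{\tUU_1^{\intercal}\bggk} \le \vnorm{\bggk}$ this yields $\vnorm{\xx_1^{\star}} \le \vnorm{\bggk}/(\gamma-\varepsilon)$. For the second piece, $\vnorm{[\tHHk\PP_2]^{\dagger}} \le C/\varepsilon$ by \cref{eq:sigma_tU2}, while $\vnorm{\PP_2\bggk} = \vnorm{\tUU_2^{\intercal}\bggk}$ is bounded by $(2\varepsilon/\gamma + \sqrt{1-\nu})\vnorm{\bggk}$ exactly as in the proof of \cref{thm:sub_proj}, so $\vnorm{\xx_2^{\star}} \le C(2/\gamma + \sqrt{1-\nu}/\varepsilon)\vnorm{\bggk}$. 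Because $\xx_1^{\star}\in\range(\tUU_1)$ and $\xx_2^{\star}\in\range(\tUU_2)$ are orthogonal, the Pythagorean identity gives $\vnorm{\pp_k^{(t)}}^2 = \vnorm{\xx_1^{\star}}^2 + \vnorm{\xx_2^{\star}}^2$, which is precisely the quantity bounded in the proof of \cref{lemma:pseudo_regularity_tH}. Applying $\sqrt{a^2+b^2}\le a+b$ for $a,b\ge 0$ then recovers $\vnorm{\pp_k^{(t)}} \le \vnorm{\bggk}/\tgamma$ with the same $\tgamma$, and the acute case follows immediately by dropping the $\PP_2$ contribution since $\tUU_2$ is then empty.

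The step I expect to require the most care is the range of the iteration index. \cref{lemma:lanczos} is stated only for $t \le \rank(\tHHk\PP_i)$, i.e. $t \le r$ for $\PP_1$ and $t \le \tr-r$ for $\PP_2$, whereas the claim concerns all $t \le \rank(\tHHk)=\tr$. The resolution I would spell out is that the projected Krylov subspace $\PP_i\cdot\mathcal{K}_t$ lies in $\range(\PP_i)$ and hence saturates once $t$ reaches $\rank(\tHHk\PP_i)$; for larger $t$ the minimizer $\xx_i^{\star}$ is unchanged, so the bound from \cref{lemma:lanczos} evaluated at the saturation index continues to hold. This is exactly what allows control of every intermediate iterate and thereby circumvents the obstacle emphasized in the text, namely that the iterate norms $\vnorm{\pp_k^{(t)}}$ need not be monotonic in $t$, so that bounding only the final exact iterate through \cref{lemma:pseudo_regularity_tH} would not be enough.
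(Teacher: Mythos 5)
Your proof is correct and follows essentially the same route as the paper's: decoupling via \cref{lemma:decoupling} with the projectors $\tUU_1\tUU_1^{\intercal}$ and $\tUU_2\tUU_2^{\intercal}$, bounding each piece through \cref{lemma:lanczos} together with \cref{eq:sigma_tU1,eq:sigma_tU2} and the gradient bound from the proof of \cref{thm:sub_proj}, and finishing with the Pythagorean identity and $\sqrt{a^{2}+b^{2}}\leq a+b$. Your explicit treatment of the saturation of the projected Krylov subspace once $t$ exceeds $\rank(\tHHk\PP_i)$ is a detail the paper leaves implicit, but it does not alter the argument.
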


\begin{proof}
	For simplicity, we drop the dependence on $ k $ and $ t $. 
	Let $ \tPP_1, \tPP_2 $ and $ \tPP_{\perp} $ denote the projectors $ \tUU_1 \tUU_1^{\intercal}, \tUU_2 \tUU_2^{\intercal} $ and $ \tUU_{\perp} \tUU_{\perp}^{\intercal} $, respectively, where $ \tUU_1, \tUU_2$ and $  \tUU_{\perp} $ are defined in \cref{eq:svd}. Also, let $ \tPP = \tPP_1 + \tPP_2 $.
	Using \cref{lemma:decoupling}, we can write \cref{eq:MINRES-QLP-SubProb} as $ \pp = \pp_{1} + \pp_{2} $, where
	\begin{align*}
	\pp_{1} =  \argmin_{ \pp \in \tPP_{1} \cdot \mathcal{K}_{t}} \vnorm{\tHH \pp + \tPP_{1}\bgg}, \quad \pp_{2} =  \argmin_{ \pp \in \tPP_{2} \cdot \mathcal{K}_{t}} \vnorm{\tHH \pp + \tPP_{2}\bgg}.
	\end{align*}

	From \cref{lemma:lanczos,eq:sigma_tU1}, it follows that 
	\begin{align*}
	\vnorm{\pp_{1}} \leq \vnorm{\tPP_1 \bgg } \vnorm{\left[ \tHH \tPP_1 \right]^{\dagger}} = \vnorm{\tUU_1 \tUU_1^{\intercal} \bgg } \vnorm{\left[ \tUU_1 \tUU_1^{\intercal} \tHH \right]^{\dagger}} \leq \frac{1}{\gamma - \varepsilon} \vnorm{\bgg}.
	\end{align*}		
	Similarly, by \cref{eq:null_space_grad_2}, \cref{eq:sigma_tU2}, \cref{thm:proj}, and \cref{lemma:lanczos}, we have
	\begin{align*}
	\vnorm{\pp_{2}} &\leq \vnorm{\tPP_2 \bgg } \vnorm{\left[ \tHH \tPP_2 \right]^{\dagger}} \leq \vnorm{\left( \tUU_2 \tUU_2^{\intercal} + \tUU_{\perp} \tUU_{\perp}^{\intercal} \right)\bgg} \vnorm{\left[ \tUU_2 \tUU_2^{\intercal} \tHH \right]^{\dagger}} \\
	&\leq \frac{C}{\varepsilon} \vnorm{\left( \tUU_2 \tUU_2^{\intercal} + \tUU_{\perp} \tUU_{\perp}^{\intercal} - \UU_{\perp} \UU_{\perp}^{\intercal} + \UU_{\perp} \UU_{\perp}^{\intercal} \right)\bgg} \\
	&\leq \frac{C}{\varepsilon}\vnorm{\left( \UU \UU^{\intercal} - \tUU_{1} \tUU_{1}^{\intercal} + \UU_{\perp} \UU_{\perp}^{\intercal} \right) \bgg} \leq \frac{C}{\varepsilon}\left(\frac{2\varepsilon}{\gamma} + \sqrt{1 - \nu} \right) \vnorm{\bgg},
	\end{align*}	
	which gives us
	\begin{align*}
	\vnorm{\pp_{2}} \leq C \left(\frac{2}{\gamma} + \frac{\sqrt{1 - \nu}}{\varepsilon} \right) \vnorm{\bgg}.
	\end{align*}
	Finally, we obtain
	\begin{align*}
	\vnorm{\pp}^{2} = \vnorm{\pp_{1} + \pp_{2}}^{2} = \vnorm{\pp_{1}}^{2} + \vnorm{\pp_{2}}^{2} \leq \left( \left( \frac{1}{\gamma - \varepsilon} \right)^{2} + \left( C \left(\frac{2}{\gamma} + \frac{\sqrt{1 - \nu}}{\varepsilon} \right) \right)^{2} \right)\vnorm{\bgg}^{2}.
	\end{align*}
	The result follows from the inequality $ \sqrt{a^{2} + b^{2}} \leq a + b, \; \forall a,b \geq 0 $.
\end{proof}

The inexactness condition in \cref{eq:least_norm_solution_relaxed} involves two criteria for an approximate solution $ \ppk $, namely feasibility of $ \ppk $ in \cref{eq:least_norm_solution_relaxed} and that $ \ppk \in \range(\tHHk) $. When $ \bggk \in \range(\tHHk) $, the latter is enforced naturally as a result of MINRES-QLP's underlying Krylov subspace. However, in cases where $ \bggk \notin \range(\tHHk) $, one could simply modify the Krylov subspace as described in \cite{roosta2018newton}. To allow for unification of the results of this section, we define \emph{range-invariant} Krylov subspace, which encapsulate these variants. 

\begin{definition}[Range-invariant Krylov Subspace]
	\label{def:krylov}
	\vspace*{-2mm}
	For any symmetric matrix $ \AA $, the range-invariant Krylov subspace is defined as follows. 
	\begin{enumerate}[label = \textbf{(\roman*)}]
		\item If $\bb \in \range(\AA)$, we can consider the usual $ \mathcal{K}_{t}(\AA, \bb) $ , e.g., MINRES \cite{paige1975solution}.
		\item Otherwise, we can always employ $ \mathcal{K}_{t}(\AA, \AA \bb) $, e.g., MR-II \cite{hanke2017conjugate}.
	\end{enumerate}
	Here, $t = 1, \ldots, \rank(\AA)$.
\end{definition}
In the subsequent discussion, we always assume that MINRES-QLP used within \cref{alg:Newton_invex_sub} generates iterates from an appropriate range-invariant Krylov subspace, $ \mathcal{K}_{t}(\tHHk, \bggk) $ or $ \mathcal{K}_{t}(\tHHk, \tHHk \bggk) $ (cf.\ \cref{eq:MINRES-QLP-SubProb}). 

Now, similar with the proofs for exact updates \cref{thm:exact}, we can obtain the following results for \cref{alg:Newton_invex_sub} with inexact updates satisfying \cref{eq:least_norm_solution_relaxed}.

\begin{theorem}[\cref{alg:Newton_invex_sub} With Inexact Updates]
	\label{thm:inexact} 
	\vspace*{-2mm}
	Under \cref{assmpt:perturb,assmpt:moral_diff,assmpt:lipschitz_special,assmpt:pseudo_regularity,assmpt:null_space} and \cref{cond:stable,cond:theta}, for the iterates of \cref{alg:Newton_invex_sub} with inexact updates, we have
	\begin{align*}
	\vnorm{\bgg_{k+1}}^2 \leq \left(1 -  \eta \right) \vnorm{\bggk}^2
	\end{align*} 
	where
	\begin{align*}
		\eta \defeq \max \left\{0,\frac{4 \rho  \tgamma^2 (1 - \theta)}{L(\xx_{0})}\left( (1 - \rho) (1 - \theta) -  \frac{\varepsilon}{\tgamma} \right) \right\} \in [0,1],
	\end{align*}
	and $ \rho, L(\xx_{0}), \tnu, \tgamma$ and $ \theta $ are, respectively, as in \cref{eq:armijo_gen}, \cref{assmpt:lipschitz_special}, \cref{lemma:pseudo_regularity_tH,lemma:null_space_tH}, and \cref{cond:theta}.
\end{theorem}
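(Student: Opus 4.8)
The plan is to mirror the proof of \cref{thm:exact}, substituting the two exact-update ingredients by their inexact analogues. First I would invoke \cref{lemma:aux} with $ \xx = \xxk $, $ \zz = \xxk + \ppk $, $ \yy = \xxk + \alpha \ppk $ and $ h(\xx) = \vnorm{\bgg(\xx)}^2/2 $ to obtain the quadratic bound
\begin{align*}
\vnorm{\bgg_{k+1}}^2 \leq \vnorm{\bggk}^2 + 2\alpha \dotprod{\ppk, \HHk \bggk} + \alpha^2 L(\xxo) \vnorm{\ppk}^2.
\end{align*}
Requiring the right-hand side not to exceed the Armijo target $ \vnorm{\bggk}^2 + 2\rho\alpha\dotprod{\ppk, \tHHk \bggk} $ yields the sufficient step-size condition $ \alpha \leq 2(\rho\dotprod{\bggk, \tHHk \ppk} - \dotprod{\bggk, \HHk \ppk})/(L(\xxo)\vnorm{\ppk}^2) $, exactly as in \cref{thm:exact}.

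The two places where the inexact analysis departs are the numerator and the denominator of this fraction. For the numerator I would write $ \rho\dotprod{\bggk, \tHHk \ppk} - \dotprod{\bggk, \HHk \ppk} = -(1-\rho)\dotprod{\bggk, \tHHk \ppk} - \dotprod{\bggk, (\HHk - \tHHk)\ppk} $ and control each piece: the first via the inexactness feasibility condition \cref{eq:least_norm_solution_relaxed}, which gives $ -(1-\rho)\dotprod{\bggk, \tHHk \ppk} \geq (1-\rho)(1-\theta)\vnorm{\bggk}^2 $, and the second via Cauchy--Schwarz together with $ \vnorm{\HHk - \tHHk} \leq \varepsilon $ and the key bound $ \vnorm{\ppk} \leq \vnorm{\bggk}/\tgamma $ from \cref{lemma:solution_norm}, giving $ -\dotprod{\bggk, (\HHk - \tHHk)\ppk} \geq -(\varepsilon/\tgamma)\vnorm{\bggk}^2 $. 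For the denominator I would again use \cref{lemma:solution_norm} in the form $ \vnorm{\ppk}^2 \leq \vnorm{\bggk}^2/\tgamma^2 $. Combining these, provided $ \varepsilon < (1-\rho)(1-\theta)\tgamma $, the backtracking line-search returns a step-size bounded below by $ \alpha \defeq (2\tgamma^2/L(\xxo))\big((1-\rho)(1-\theta) - \varepsilon/\tgamma\big) $, with the trivial lower bound $ \alpha = 0 $ applying otherwise.

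To close, I would substitute this lower bound back into \cref{eq:armijo_gen} and apply the inexactness condition once more, obtaining $ \vnorm{\bggkk}^2 \leq (1 - 2\rho\alphak(1-\theta))\vnorm{\bggk}^2 \leq (1 - 2\rho\alpha(1-\theta))\vnorm{\bggk}^2 $, which identifies $ \eta = 2\rho\alpha(1-\theta) = 4\rho\tgamma^2(1-\theta)\big((1-\rho)(1-\theta) - \varepsilon/\tgamma\big)/L(\xxo) $, as claimed. The membership $ \eta \in [0,1] $ then follows exactly as in \cref{thm:exact}: the lower bound is enforced by the outer $ \max $, while for the upper bound I would discard the negative $ \varepsilon/\tgamma $ term, use $ 1-\theta \leq \tnu \leq \nu $ (the first inequality being \cref{cond:theta}) and $ \tgamma \leq \gamma $, and finally appeal to \cite[Remark 5]{roosta2018newton} to conclude $ \eta \leq 4\rho(1-\rho)\nu^2\gamma^2/L(\xxo) \leq 1 $.

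The only obstacle genuinely specific to inexact updates has in fact already been discharged upstream, namely the uniform bound $ \vnorm{\ppk} \leq \vnorm{\bggk}/\tgamma $ on \emph{every} MINRES-QLP iterate. This is delicate precisely because, unlike the exact case covered by \cref{lemma:pseudo_regularity_tH}, the intermediate iterate norms need not be monotone; it is supplied by \cref{lemma:solution_norm} via the decoupling of \cref{lemma:decoupling} and the Lanczos estimate of \cref{lemma:lanczos}. Granting that lemma, the remainder is a routine re-run of the exact-update computation with $ \tnu $ replaced by $ 1-\theta $ throughout, so I do not anticipate any essential difficulty beyond careful bookkeeping of the inexactness tolerance.
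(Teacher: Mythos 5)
Your proposal is correct and follows essentially the same route as the paper's own proof: the same \cref{lemma:aux}/Armijo argument inherited from \cref{thm:exact}, the same decomposition of $ \rho \dotprod{\bggk, \tHHk \ppk} - \dotprod{\bggk, \HHk \ppk} $ controlled by the inexactness condition \cref{eq:least_norm_solution_relaxed} and the uniform iterate bound of \cref{lemma:solution_norm}, the same step-size lower bound $ \alpha $ (with the trivial bound $ 0 $ otherwise), and the same conclusion $ \eta = 2\rho\alpha(1-\theta) \in [0,1] $ via $ 1-\theta \leq \tnu $ and \cite[Remark 5]{roosta2018newton}. You also correctly identify that the only genuinely new ingredient relative to the exact case is \cref{lemma:solution_norm}, which is exactly the role it plays in the paper.
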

\begin{proof}  
	\label{pf:inexact}
	Similar with the proof for \cref{thm:exact}, by \cref{eq:perturb,eq:least_norm_solution_relaxed,lemma:solution_norm}, we have
	\begin{align*}
	\rho \dotprod{ \bggk, \tHHk \ppk} - \dotprod{\bggk, \HHk \ppk} &= -(1 - \rho)\dotprod{ \bggk, \tHHk \ppk} + \dotprod{\bggk, \left(\tHHk - \HHk\right) \ppk} \nonumber\\
	&\geq (1 - \rho)(1 - \theta)\vnorm{\bggk}^2 - \frac{\varepsilon}{\tgamma} \vnorm{\bggk}^2.
	\end{align*}
	If $ \varepsilon $ satisfies the inequality $ \varepsilon \leq (1 - \rho) (1-\theta) \tgamma $, the lower bound on the step-size returned by line-search \cref{eq:armijo_gen} is $ \alphak \geq \alpha $ where
	\begin{align*}
		\alpha \defeq \frac{2 \tgamma^2 }{L(\xx_{0})}\left( (1 - \rho) (1 - \theta) -  \frac{\varepsilon}{\tgamma} \right).
	\end{align*}	
	Otherwise, the lower bound is the trivial $ \alpha = 0 $.
	Now, from \cref{eq:armijo_gen} with the lower bound $ \alpha $, we get
	\begin{align*}
		\vnorm{\bggkk}^{2} &\leq \vnorm{\bggk}^{2} + 2 \rho \alphak \dotprod{\tHHk \ppk, \bggk} \leq \vnorm{\bggk}^{2} - 2 \rho \alphak (1 - \theta) \vnorm{\bggk}^{2} \leq \left(1 - 2 \rho \alpha (1 - \theta)\right) \vnorm{\bggk}^{2},
	\end{align*}
	which implies $ \eta = 2 \rho \alpha (1 - \theta) $. We finally note since $ (1 - \theta) \leq \tnu $, similarly to the line of reasoning at the end of the proof of \cref{thm:exact}, we can deduce that $ \eta \in [0,1] $.
\end{proof}

\begin{remark}
	\vspace*{-2mm}
	Note that when $ \theta = 1-\tnu $, \cref{thm:exact,thm:inexact} exactly coincide. 
\end{remark}

From \cref{thm:inexact}, similar to \cref{thm:exact}, one cannot establish sufficient descent required for convergence unless $ \eta > 0 $. However, similar results as those of \cref{coro:exact_nuis0,coro:exact_nuis1} corresponding to \cref{thm:inexact} can also be easily established here. We omit those results for the sake of brevity. Nonetheless, the interesting interplay between $ \varepsilon $ and $ \theta $ that arises as a result of \cref{thm:inexact} should be highlighted. For example, suppose $ \nu = 1 $. By inspecting the condition $ \eta > 0 $, i.e., $  \varepsilon < \tgamma (1 - \rho) (1 - \theta) $, one can see that the smaller values of $ \epsilon $, i.e., more accurate estimations of $ \HH $, allow for larger values of $ \theta $, which, in turn, amount to cruder approximations to the exact least-norm solution. In other words, Hessian approximation and sub-problem accuracy in the form of least-squares residual (cf.\ \cref{eq:inexact_residual_2}) are inversely related.
 
As in \cref{thm:exact_err}, we can obtain a recursive behavior of $ \vnorm{\bggk} $ for the case where $ \alphak = 1 $, which can then be used to deduce a local problem-independent convergence rate similar to that described in \cref{rem:exact_local}.

\begin{theorem}[\cref{alg:Newton_invex_sub} With $ \alphak = 1 $ and Inexact Updates]
	\label{thm:inexact_err} 
	\vspace*{-2mm}
	Under the assumptions of \cref{thm:inexact} with \cref{assmpt:lipschitz_special} replaced with \cref{eq:lip_usual_hessian}, 
	for the iterates of \cref{alg:Newton_invex_sub} with $ \alphak = 1 $ and inexact updates \cref{eq:least_norm_solution_relaxed}, we have
	\begin{align*}
	\vnorm{\bgg(\xx_{k+1})} \leq \frac{L_{\HH}}{2 \tgamma^2} \vnorm{\bgg(\xxk)}^2 + \left( \frac{\varepsilon}{\tgamma} + \sqrt{\theta} \right) \vnorm{\bgg(\xxk)}. 
	\end{align*}
	where $ L_{\HH}, \tgamma$ and $\theta$ are, respectively, as in \cref{eq:lip_usual_hessian}, \cref{lemma:pseudo_regularity_tH}, and \cref{cond:theta}.
\end{theorem}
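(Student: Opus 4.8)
The plan is to mirror the proof of \cref{thm:exact_err} almost verbatim, replacing the two structural facts that were special to exact updates by their inexact counterparts already established in this section. As in that proof, I would first invoke the mean-value theorem for vector-valued functions to write
\begin{align*}
\bgg(\xx_{k+1}) = \bgg(\xxk + \ppk) = \bgg(\xxk) + \int_{0}^{1} \HH\left(\xxk + t \ppk\right) \ppk \, \df t.
\end{align*}
In the exact case the crucial algebraic identity was $\bgg(\xxk) + \HHk \ppk$ collapsing onto the orthogonal-complement projection $\tUU_{\perp} \tUU_{\perp}^{\intercal} \bgg(\xxk)$. That identity no longer holds for an inexact $\ppk$. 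The key observation is that we should instead organize the expansion around the least-squares residual $\rr_k \defeq \tHHk \ppk + \bggk$, for which \cref{lemma:inexact_residual} (specifically \cref{eq:inexact_residual_2}) supplies $\vnorm{\rr_k} \leq \sqrt{\theta} \vnorm{\bggk}$. Writing $\bggk = \rr_k - \tHHk \ppk$ and noting $\tHHk \ppk = \int_0^1 \tHHk \ppk \, \df t$, the expansion becomes
\begin{align*}
\bgg(\xx_{k+1}) = \rr_k + \int_{0}^{1} \left[ \HH\left(\xxk + t \ppk\right) - \tHHk \right] \ppk \, \df t.
\end{align*}

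From here the estimate is a direct analogue of \cref{thm:exact_err}. I would take norms, bound $\vnorm{\rr_k} \leq \sqrt{\theta}\vnorm{\bggk}$, and split the integrand via the triangle inequality as $\vnorm{\HH(\xxk + t\ppk) - \tHHk} \leq \vnorm{\HH(\xxk + t\ppk) - \HHk} + \vnorm{\HHk - \tHHk}$. The first term is controlled by the Hessian-Lipschitz bound \cref{eq:lip_usual_hessian}, giving $L_{\HH}\, t \vnorm{\ppk}$, and the second by $\varepsilon$ from \cref{eq:perturb}. Integrating in $t$ yields the factor $\frac{L_{\HH}}{2}\vnorm{\ppk} + \varepsilon$ multiplying $\vnorm{\ppk}$, so that
\begin{align*}
\vnorm{\bgg(\xx_{k+1})} \leq \sqrt{\theta}\vnorm{\bggk} + \left( \frac{L_{\HH}}{2} \vnorm{\ppk} + \varepsilon \right) \vnorm{\ppk}.
\end{align*}

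The final step is to eliminate $\vnorm{\ppk}$ using the second replacement fact, namely the uniform bound $\vnorm{\ppk} \leq \vnorm{\bggk}/\tgamma$ from \cref{lemma:solution_norm}, which is exactly the inexact-update surrogate for \cref{lemma:pseudo_regularity_tH} used in the exact proof. Substituting gives $\frac{L_{\HH}}{2\tgamma^2}\vnorm{\bggk}^2 + \frac{\varepsilon}{\tgamma}\vnorm{\bggk} + \sqrt{\theta}\vnorm{\bggk}$, which is the claimed recursion after collecting the linear terms. I do not anticipate a genuine obstacle, since \cref{lemma:inexact_residual,lemma:solution_norm} have already done the heavy lifting; the only point requiring care is the reorganization around $\rr_k$ rather than the projection $\tUU_{\perp}\tUU_{\perp}^{\intercal}\bgg$, which is what lets the $\sqrt{1-\tnu}$ appearing in \cref{thm:exact_err} be cleanly replaced by $\sqrt{\theta}$ here (consistent with \cref{cond:theta}, under which $1-\tnu \leq \theta$, so the inexact bound is weaker exactly as expected).
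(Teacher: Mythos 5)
Your proposal is correct and follows essentially the same route as the paper's proof: your reorganization around the residual $\rr_k = \tHHk \ppk + \bggk$ is algebraically identical to the paper's step of adding and subtracting $\tHH(\xxk)\ppk$, after which both arguments invoke \cref{eq:inexact_residual_2} for the $\sqrt{\theta}$ term, split the integrand via \cref{eq:lip_usual_hessian} and the perturbation bound $\varepsilon$, and finish with $\vnorm{\ppk} \leq \vnorm{\bggk}/\tgamma$ from \cref{lemma:solution_norm}.
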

\begin{proof}
	\label{pf:inexact_err}
	Similarly to the proof of \cref{thm:exact_err}, using \cref{eq:inexact_residual_2}, we have
	\begin{align*}
	\vnorm{ \bgg(\xx_{k+1}) } &= \vnorm{ \bgg(\xxk + \ppk)} = \vnorm{\bgg(\xxk) + \int_{0}^{1} \HH(\xxk + t \ppk) \ppk \df t}\\
	& =\vnorm{\bgg(\xxk) + \tHH(\xxk) \ppk - \tHH(\xxk) \ppk + \int_{0}^{1} \HH(\xxk + t \ppk) \ppk \df t}\\
	& \leq \vnorm{- \tHH(\xxk) \ppk + \int_{0}^{1} \HH(\xxk + t \ppk) \ppk \df t} + \vnorm{\bgg(\xxk) + \tHH(\xxk) \ppk}\\
	& \leq \vnorm{\ppk} \int_{0}^{1} \vnorm{\HH(\xxk + t \ppk) - \HH(\xxk)} \df t + \varepsilon \vnorm{\ppk} + \sqrt{\theta} \vnorm{\bgg(\xxk)} \\
	& \leq \frac{L_{\HH}}{2 \tgamma^2} \vnorm{\bgg(\xxk)}^2 + \left( \frac{\varepsilon}{\tgamma} + \sqrt{\theta} \right) \vnorm{\bgg(\xxk)}.
	\end{align*}  
\end{proof}

\begin{remark}
	\vspace*{-2mm}
	Here also, when $ \theta = 1-\tnu $, \cref{thm:exact_err,thm:inexact_err} exactly coincide. 
\end{remark}

Just as in \Cref{sec:exact}, with some simple algebraic manipulations, we can easily derive sufficient conditions on $ \varepsilon $ such that $ {\varepsilon}/{\tgamma} + \sqrt{\theta} < 1 $ in \cref{thm:inexact_err} (for example a similar result to \cref{coro:exact_err} in the special case of inherently-stable perturbations). We omit those results for the sake of brevity.

\subsection{Comparison with Sub-sampled Newton Method}
\label{sec:comparison}
As mentioned in \Cref{sec:Intro}, even though Newton-MR can be readily applied, beyond strongly-convex settings, to the more general class of invex problems, its iterations bear a strong resemblance to those of the classical Newton's algorithm. Hence, it is illuminating to have a renewed look at the results of this paper in light of the existing results on Newton's method (and it Newton-CG variant) in the contexts of inexact Hessian and strong convexity. To do this, we consider the setting of finite-sum minimization problem \cref{eq:obj_sum} and compare the present results with those of \cite{ssn2018}. For concreteness, we consider \cite[Theorem 13]{ssn2018}, which gives the global convergence of sub-sampled Newton method with problem-independent local convergence rate. To create a level playing field here, we make the same assumptions as those of \cite[Theorem 13]{ssn2018}, namely each $f_{i}$ is twice-differentiable, smooth and convex, i.e., $ 0 \leq \lambda_{\min}(\HH_{i}(\xx)) \leq \lambda_{\max}(\HH_{i}(\xx)) \leq L_{i}, \; \forall \xx \in \reals^{d}$ where $ \HH_{i}(\xx) \defeq \nabla^{2} f_{i}(\xx)$, $ f $ is $ \gamma$-strongly convex with Lipschitz gradient and Hessian, i.e., it satisfies \cref{eq:lip_usual}. Note that by \cite[Lemma 1]{roosta2018newton}, we have that 
$ L(\xxo) = L_{\bgg}^{2} + L_{\HH} \vnorm{\nabla f(\xxo)} $ where $ L(\xxo) $ is as in \cref{eq:lip_special}.
As in \cite[Section 1.5]{ssn2018}, we define $ \kappa \defeq L_{\bgg}/\gamma $ and $ \kappa_{\max} \defeq \max_{i} L_{i}/\gamma $ as the problem and sub-sampling condition numbers, respectively. Note that $ \kappa_{\max} \geq \kappa $. We also define $ \kappa_{0} \defeq \sqrt{L(\xxo)} / \gamma $. 
Now, by the assumption on $ \varepsilon $ in \cite[Theorem 13]{ssn2018}, we have $ \varepsilon \in \bigO{1/\sqrt{\kappa_{\max}}} $, which in light of \cite[Lemma 2]{ssn2018} implies that a sample size of $ |\mathcal{S}| \in \bigOt{\kappa_{\max}^{2}}$ guarantees \cref{eq:spectrum}. 
%This, in turn, gives $ \tgamma \geq  c \gamma $ for some constant $ c < 1 $ (alternatively, we can say $ \tgamma \in \Omega(\gamma) $ where the constant is smaller than one). 

Specialized to strongly-convex problems, from \cref{lemma:rank}, we have that $ \rank(\HH) = \rank(\tHH) = d $, which implies $ \tnu = 1 $ and also gives $ \tgamma = \gamma - \varepsilon $ in \cref{lemma:pseudo_regularity_tH}. Now, from \cref{thm:exact}, we get 
\begin{align*}
\eta = \frac{4 \rho (\gamma - \varepsilon )^{2}}{L_{\bgg}^{2} + L_{\HH} \vnorm{\nabla f(\xxo)} }\left( (1 - \rho) - \frac{\varepsilon}{\gamma - \varepsilon }\right).
\end{align*}
Choosing $ \varepsilon \leq (1-\rho) \gamma /(2-\rho) $ implies $\eta \geq {8 \rho (1-\rho)}/{(3-\rho)^2 \kappa^{2}_{0}}$. 
With this $ \varepsilon $, \cite[Lemma 16]{xuNonconvexTheoretical2017} implies that a sample size of $ |\mathcal{S}| \in \bigOt{\kappa_{\max}^{2}}$ is required to form $ \tHH $, which is of the same order as that for sub-sampled Newton's method above. Similarly, with this $ \varepsilon $, the local convergence result of \cref{thm:exact_err} can be stated with $ c_{2} \leq (1-\rho)/2 $. For inexact update in \Cref{sec:inexact}, we can also derive similar results in the present context. Here, we emphasize that since $ \tnu = 1 $, the inexactness tolerance can be set to any value $ \theta \in [0,1) $ in \cref{cond:theta}. This is in sharp contrast to sub-sampled Newton-CG in which the inexactness tolerance is of the order $\theta \in \bigO{{1}/{\sqrt{\kappa_{\max}}}}$, which is rather restrictive; see \cite{ssn2018} for further details on inexactness tolerance for sub-sampled Newton-CG.

We put all this together in \cref{table:comparison}. We also convert the convergence results of this paper in $ \vnorm{\bgg} $ to those of \cite{ssn2018} which are in terms of $ f - f^{\star} $ and $ \vnorm{\xx - \xxs} $ for global and local convergence regimes, respectively. For this we use the well-known facts about strong convexity \cite{nesterov2004introductory} that
\begin{align*}
\vnorm{\bgg(\xx)} &\geq \gamma \vnorm{\xx - \xxs}, \quad \text{and} \quad \vnorm{\bgg(\xx)}^{2} \geq 2 \gamma \left( f(\xx) - f(\xxs) \right).
\end{align*}
In evaluating the complexities, we have assumed that the cost of one Hessian-vector product is of the same order as evaluating a gradient, e.g,~\cite{xu2016sub,pearlmutter1994fast,ssn2018,griewank1993some}.
From \cref{table:comparison}, the overall worst-case running-time of an algorithm to achieve the prescribed sub-optimality is estimated as \big( $ n d $ + Column \#2 $ \times $ Column \#3 \big) $ \times $ \big(Column \#4 or Column \#5\big), the first term $ n d $ is the cost of evaluating the full gradient.

\begin{table}[!htbp]
	\caption{Complexity comparison of variants of Newton's method and Newton-MR methods for \eqref{eq:obj_sum}. The notation $ \tilde{\mathcal{O}} $ implies hidden logarithmic factors, e.g., $  \ln(\kappa), \ln(\kappa_{\max}), \ln(d)$. Constants $ \gamma, \kappa, \kappa_{\max}, \kappa_{0}$ are defined in \Cref{sec:comparison}. Fourth column gathers iteration complexity to achieve sub-optimality $ f(\xxk) - f(\xx^{\star}) \leq \varsigma $ for some $ \varsigma \leq 1 $. Fifth column reflect the corresponding complexity to achieve $ \|\xxk - \xxs\| \leq \varsigma $ for some $ \varsigma \leq 1 $, assuming $ \xxo $ is close enough to $ \xxs$. \label{table:comparison}}
	\centering
	\vspace*{2mm}
	\hspace*{-3mm}
	\scalebox{0.70}{
		\begin{tabular}{|c | c | c | c | c | c |} 
			\hline
			\multicolumn{1}{|c|}{\centering Method} & \multicolumn{1}{m{2.5cm}|}{\centering Evaluating Hessian-Vector Product, $ \HH \vv $} & \multicolumn{1}{m{2.3cm}|}{\centering \# of Iterations of MINRES/CG} & \multicolumn{1}{m{2.9cm}|}{\centering Global Iteration Complexity} & \multicolumn{1}{m{2.9cm}|}{\centering Local Iteration Complexity} & Reference \\ [0.5ex] 
			\hline  & & & & & \\[-1ex]
			Newton & $ \mathcal{O}(nd) $ & $\mathcal{O}(d) $  & $ \mathcal{O}(\kappa^{2} \ln\frac{1}{\varsigma}) $ & $ \mathcal{O}(\ln \ln\frac{1}{\varsigma}) $ & Folklore\\[1ex]
			\hdashline & & & & &\\[-1ex]
			Newton-CG & $ \mathcal{O}(nd) $ & $ \tilde{\mathcal{O}}(\sqrt{\kappa}) $ & $ \mathcal{O}(\kappa^{2}\ln\frac{1}{\varsigma})$ & $ \mathcal{O}(\ln \frac{1}{\varsigma}) $ & Folklore \\[1ex]
			\hdashline & & & & &\\[-1ex]
			Sub-sampled Newton  & $ \tilde{\mathcal{O}}(d \kappa_{\max}^{2}) $ & $ \mathcal{O}(d) $ & $ \mathcal{O}(\kappa \kappa_{\max}\ln\frac{1}{\varsigma}) $ & $ \mathcal{O}(\ln \frac{1}{\varsigma}) $ & \cite[Theorem 13]{ssn2018}\\[1ex]
			\hdashline & & & & &\\[-1ex]
			Sub-sampled Newton-CG  & $ \tilde{\mathcal{O}}(d \kappa_{\max}^{2}) $ & $ \tilde{\mathcal{O}}(\sqrt{\kappa_{\max}}) $ & $ \mathcal{O}(\kappa \kappa_{\max}\ln\frac{1}{\varsigma}) $ & $ \mathcal{O}(\ln \frac{1}{\varsigma}) $ & \cite[Theorem 13]{ssn2018}\\[1ex]
			\hdashline & & & & &\\[-1ex]
			Newton-MR (Exact Update)  & $ \tilde{\mathcal{O}}(nd) $ & $ \mathcal{O}(d) $ & $ \mathcal{O}(\kappa^{2}_{0}\ln\frac{1}{\varsigma}) $ & $ \mathcal{O}(\ln \ln \frac{1}{\varsigma}) $ & \cite[Corollary 1, Theorem 2]{roosta2018newton}\\[1ex]
			\hdashline & & & & &\\[-1ex]
			Newton-MR (Inexact Update)  & $ \tilde{\mathcal{O}}(n d) $ & $ \tilde{\mathcal{O}}(\sqrt{\kappa}) $ & $ \mathcal{O}(\kappa^{2}_{0}\ln\frac{1}{\varsigma}) $ & $ \mathcal{O}(\ln \frac{1}{\varsigma}) $ & \cite[Corollary 2, Theorem 4]{roosta2018newton}\\[1ex]
			\hdashline & & & & &\\[-1ex]
			\cref{alg:Newton_invex_sub} with \cref{eq:least_norm_solution}  & $ \tilde{\mathcal{O}}(d \kappa_{\max}^{2}) $ & $ \mathcal{O}(d) $ & $ \mathcal{O}(\kappa^{2}_{0}\ln\frac{1}{\varsigma}) $ & $ \mathcal{O}(\ln \frac{1}{\varsigma}) $ & \cref{thm:exact,thm:exact_err}\\[1ex]
			\hdashline & & & & &\\[-1ex]
			\cref{alg:Newton_invex_sub} with \cref{eq:least_norm_solution_relaxed}  & $ \tilde{\mathcal{O}}(d \kappa_{\max}^{2}) $ & $ \tilde{\mathcal{O}}(\sqrt{\kappa_{\max}}) $ & $ \mathcal{O}(\kappa^{2}_{0}\ln\frac{1}{\varsigma}) $ & $ \mathcal{O}(\ln \frac{1}{\varsigma}) $ & \cref{thm:inexact,thm:inexact_err}\\[1ex]
			\hline
	\end{tabular}}
\end{table}

\cref{table:comparison} gives complexities involved in various algorithms for achieving sub-optimality in objective value, i.e.,  $ f(\xxk) - f(\xx^{\star}) \leq \varsigma $ for some $ \varsigma \leq 1 $ and the corresponding complexity to achieve $ \|\xxk - \xxs\| \leq \varsigma $ for some $ \varsigma \leq 1 $, assuming $ \xxo $ is in the vicinity of the solution $ \xxs$.  
We note that the complexities given in \cref{table:comparison} are , not only, for worst-case analysis, but also they are pessimistic. For example, from the worst-case complexity of the algorithms with Hessian sub-sampling, it appears that they are advantageous only in some marginal cases. However, this is unfortunately a side-effect of our analysis and not an inherent property of the sub-sampled algorithm. In this light, any conclusions from these tables should be made with great care.  

%Naturally, it is advisable to perform Hessian sub-sampling only when $ n\gg1 $; \cref{table:comparison}, very pessimistically, suggests that Hessian sub-sampling offer computational savings for Newton's method and Newton-MR when $n \geq \kappa^{2}_{\max}$ and $n \geq \kappa^{2}_{0}$, respectively. 

In the strongly-convex setting with the above smoothness assumptions, since $ \kappa_{0} \geq \kappa $, the global worst-case iteration complexity of Newton-MR is worse than that of Newton-CG (of course, Newton-MR applies to the larger class of invex objectives, which are also allowed to be less smooth than what is assumed to generate \cref{table:comparison}; see \cite{roosta2018newton} for a detailed discussion.) However, for sub-sampled variants of these algorithms, the comparison is not as straightforward. Indeed, the interplay between $ \xxo $, $ L_{\HH} $, and $ \max_{i} L_{i} $ determines the relationship between $ \kappa\kappa_{\max} $ and $ \kappa^{2}_{0} $. For examples, if $ \xxo $ is chosen such that $ \vnorm{\nabla f(\xxo)} \ll 1 $, then one expects to see $ \kappa^{2}_{0} \leq \kappa \kappa_{\max} $, which implies \cref{alg:Newton_invex_sub} should perform better than sub-sampled Newton methods in \cite{ssn2018}. Similarly, if $ L_{i}$'s are very non-uniform, then noting that $ L_{\bgg} \leq \sum_{i = 1}^{n} L_{i}/n $, we can also expect $ \kappa_{\max} \gg \kappa $, which could imply $ \kappa^{2}_{0} \leq \kappa \kappa_{\max} $. 

Although not reflected in \cref{table:comparison}, it has been shown that for linear systems involving positive definite systems, MINRES can achieve a given relative residual tolerance in far fewer iterations than CG; see \cite{fong2012cg} for a detailed discussion. This observation
%, coupled with the fact that $ \theta $ in \cref{cond:theta} can be set to any value $ \theta \in [0,1) $ as opposed to that required for sub-sampled Newton-CG method which is $\theta \in \bigO{{1}/{\sqrt{\kappa_{\max}}}}$,  
indicates that \cref{alg:Newton_invex_sub} with \cref{eq:least_norm_solution_relaxed} should typically converge faster than sub-sampled Newton-CG method of \cite{ssn2018}. This is indeed confirmed by the numerical experiments of \Cref{sec:experiments}. Finally, from \cref{table:comparison}, we can also see that in the absence of a good preconditioner, if $ \kappa_{\max} \geq d^{2} $, solving \cref{eq:least_norm_solution} exactly can be potentially more efficient than resorting to an inexact method.

\section{Numerical Experiments}
\label{sec:experiments}
In this section, we empirically verify the theoretical results of this paper and also evaluate the performance of Newton-MR as compared with several optimization methods. In particular, we first study the effect of unstable perturbations with $ \nu < 1 $ in \Cref{sec:frac} and show that, somewhat unintuitively, reducing the perturbations indeed results in worsening of the performance. In \Cref{sec:stable}, we then turn our attention to two class of problems where $ \nu = 1 $ and demonstrate that such inherent stability allows for the design of a highly efficient variant of Newton-MR in which the Hessian is approximated. The code for the experiments is available at \href{https://github.com/syangliu/Newton-MR}{https://github.com/syangliu/Newton-MR}.

\begin{figure}[!htbp]
	\centering
	% \vspace*{-3mm}
	\subfigure[$ f(\xxk) $ vs.\ Iterations]
	{\includegraphics[scale=0.35]{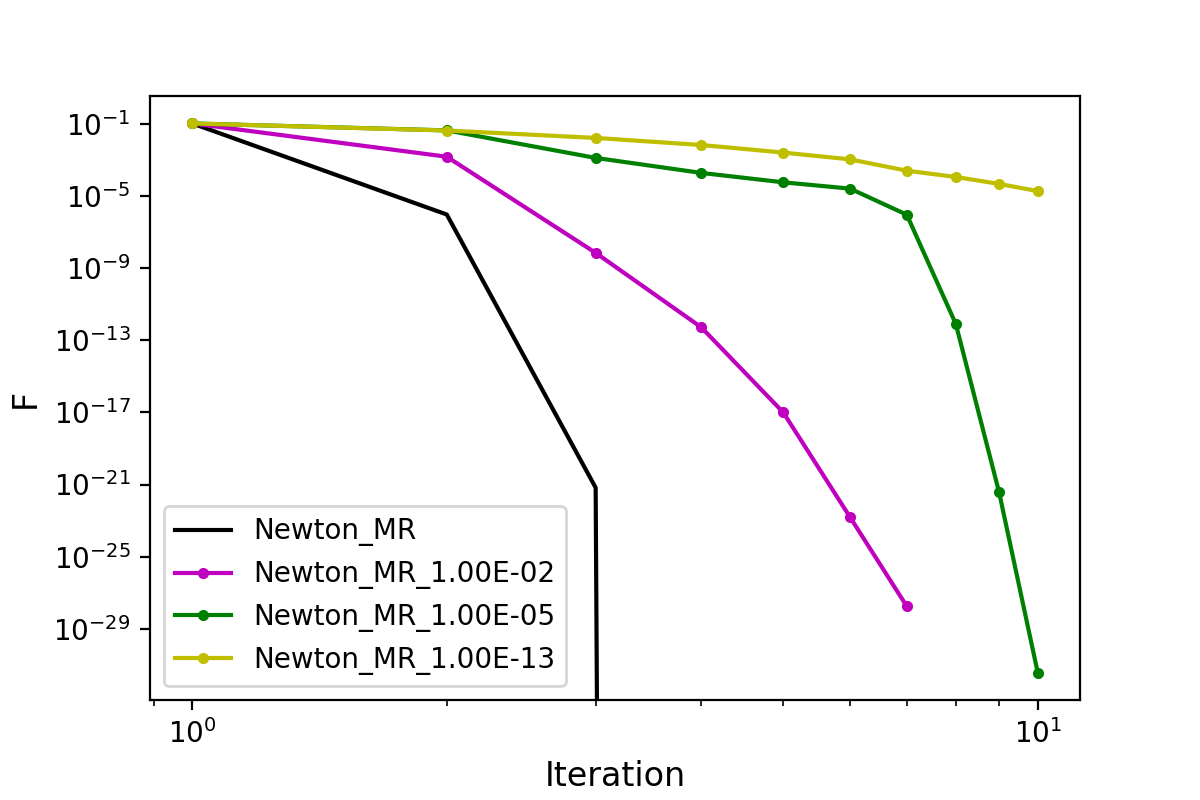}}
	\subfigure[$ \vnorm{\nabla f(\xxk)} $ vs.\ Iterations]
	{\includegraphics[scale=0.35]{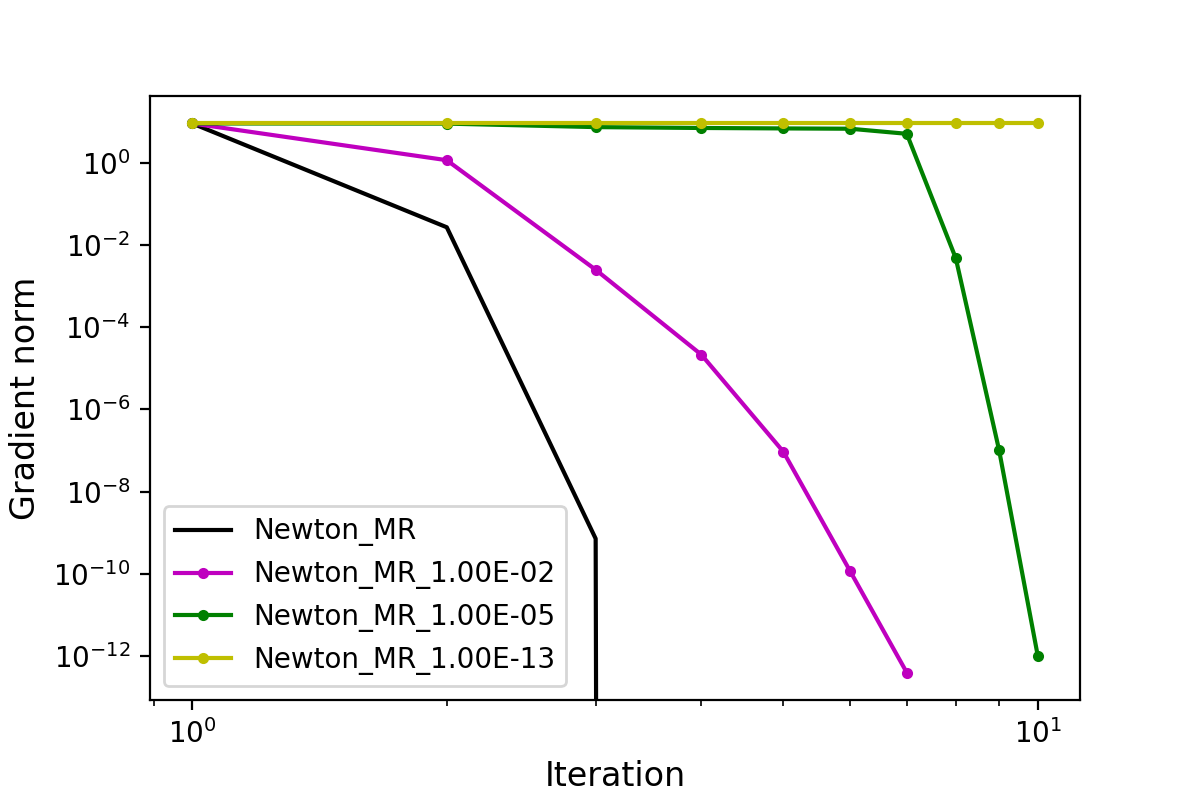}}
	\subfigure[Step-size vs.\ Iterations]
	{\includegraphics[scale=0.35]{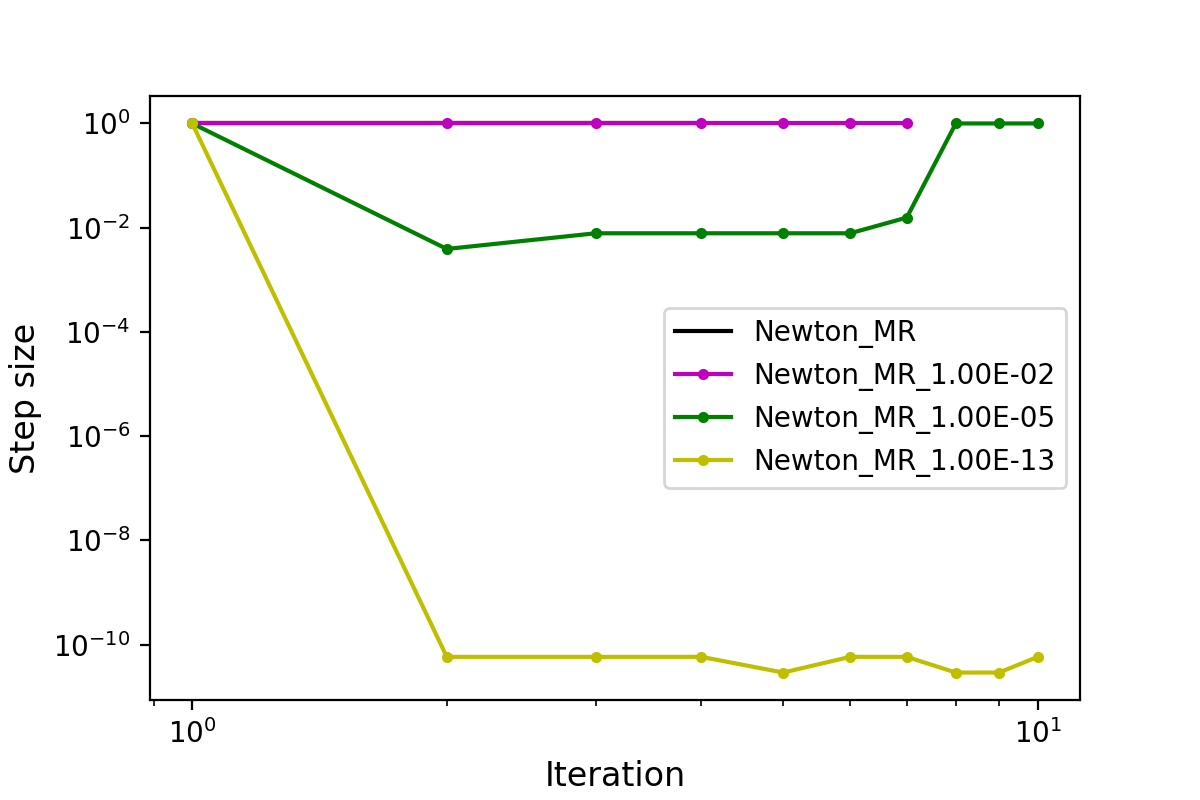}}
	\caption{Performance of Newton-MR under an unstable perturbation for $\varepsilon = 10^{-2}$, $10^{-5}$, and  $10^{-13}$ as in \Cref{sec:frac}, e.g., ``Newton-MR\_1.00E-02'' refers to the perturbation with $ \varepsilon = 10^{-2} $. ``Newton-MR'' refers to unperturbed algorithm. \label{fig:fraction}}
\end{figure}

\subsection{Unstable Perturbations}
\label{sec:frac}
We now verify the theoretical results of this paper in case of unstable perturbations where $ \nu < 1 $ and the perturbation is not acute. For this, we consider a simple two dimensional function, taken from \cite[Example 5]{roosta2018newton}, as
\begin{align}
\label{eq:fraction}
f(x_{1},x_{2}) = \frac{a x_{1}^2}{b - x_{2}},
\end{align}
where $ \textnormal{dom}(f) = \big\{(x_{1}, x_{2}) \;|\; x_{1} \in \mathbb{R}, \; x_{2} \in (-\infty,b) \cup (b, \infty)\big\} $. Clearly, $ f $ is unbounded below and, admittedly, this example is of little interest in optimization. In fact, applying \cref{alg:Newton_invex_sub} to \cref{eq:fraction} amounts to finding its stationary points, which are of the form $ (0,x_2) \in \textnormal{dom}(f) $. Nonetheless, \cref{eq:fraction} serves our purpose of demonstrating the effects of unstable perturbations in the performance of \cref{alg:Newton_invex_sub}. 

In \cite[Example 5]{roosta2018newton}, it has been shown that $ \nu = 8/9 $. 
%The gradient and the Hessian of \cref{eq:fraction} can be written as 
%\begin{align*}
%\bgg  = \left(\begin{array}{c}
%\frac{-2 a x_{1}}{x_{2}-b} \\ \\
%\frac{a x_{1}^{2}}{(x_{2}-b)^{2}}
%\end{array} \right),\quad 	
%\HH = \left(\begin{array}{cc}
%\frac{- 2 a }{x_{2} - b} & \frac{2 a x_{1}}{(x_{2}-b)^{2}} \\ \\
%\frac{2 a x_{1}}{(x_{2}-b)^{2}} & \frac{- 2 a x_{1}^{2}}{(x_{2}-b)^{3}}
%\end{array} \right).
%\end{align*}
Here, we consider $ a = 100, b = 1 $ and $ \xx_0 $ is chosen randomly from standard normal distribution. We draw a symmetric random matrix $ \EE $ from the Gaussian orthogonal ensemble and form the perturbed Hessian as $ \tHH = \HH + \varepsilon \EE/\vnorm{\EE} $. We consider \cref{alg:Newton_invex_sub} with exact updates for unperturbed as well as  perturbed Hessian with $ \varepsilon = 10^{-2}$, $10^{-5}$, and $10^{-13}$, respectively. As seen in \cref{fig:fraction}, for such an unstable perturbation, better approximations to the true Hessian, perhaps unintuitively, do not necessarily help with the convergence.  In fact, smaller values of $ \varepsilon $ amount to search directions that grow unboundedly larger, which result in the step-size shrinking to zero to counteract such unbounded growth; see \cref{fig:pert} for a depiction of this phenomenon. These numerical observations reaffirm the theoretical results of \Cref{sec:convergence}. 

\subsection{Stable Perturbations}
\label{sec:stable}
In this section, we demonstrate the efficiency of \cref{alg:Newton_invex_sub} under inherently stable perturbations. Our empirical evaluations of this section are done in the context of finite-sum minimization \cref{eq:obj_sum}. We first make comparisons among several Newton-type methods. In particular, we consider Newton-MR, Newton-CG, as well as their stochastic variants in which the Hessian matrix is sub-sampled, while the function and its gradient are computed exactly. We also consider the classical Gauss-Newton as well as L-BFGS. We then turn our attention to comparison among sub-sampled Newton-MR and several first-order alternatives, namely SGD with and without momentum \cite{sutskever2013importance}, Adagrad \cite{duchi2011adaptive}, RMSProp \cite{tijmen2012rmsprop}, Adam \cite{kingma2014adam}, and Adadelta \cite{zeiler2012adadelta}. We consider both deterministic and stochastic variants of the first-order algorithms where the gradient is, respectively, computed exactly and estimated using sub-samples. All first-order algorithms in this section use constant step-sizes, which are carefully fine-tuned for each experiment to give the best performance in terms of reducing the objective value.

\paragraph{Complexity Measure}
In all of our experiments, in addition to ``wall-clock'' time, we consider total number of oracle calls of function, gradient and Hessian-vector product as a complexity measure for evaluating the performance of each algorithm. Similar to \cite[Section 4]{roosta2018newton}, this is a judicious decision as measuring ``wall-clock'' time can be highly affected by particular implementation details. More specifically, for each $ i $ in \cref{eq:obj_sum}, after computing $ f_{i}(\xx) $, computing $ \nabla f_{i}(\xx) $ is equivalent to one additional function evaluation. In our implementations, we merely require Hessian-vector products $ \nabla^{2}f_{i}(\xx) \vv $, instead of forming the explicit Hessian, which amounts to two additional function evaluations, as compared with gradient evaluation. The number of such oracle calls for all algorithms considered here is given in \cref{table:oracle_calls}. 
\begin{table}	
	\centering
	\hspace*{-5.5mm}
	\scalebox{0.92}{
\begin{tabular}{|c|*{6}{c}|} 
	\hline 
	\multirow{2}{1.8cm}{\centering 2nd-order Methods} & Newton-MR  & Newton-CG & Gauss-Newton & ssNewton-MR & ssNewton-CG &  L-BFGS \\ [0.5ex]
	\cline{2-7}
	& $2 (t + \ell + 1) $ & $ 2 t + \ell + 2 $  & $ 2 t  + \ell + 2 $  & $ 2 t s / n + 2 (\ell + 1) $  & $ 2 t s / n + \ell + 2 $ & $ 2(\ell +1) $ \\
	\hline
	\hline
	\multirow{2}{1.8cm}{\centering 1st-order Methods} & Momentum & Adagrad & Adadelta & RMSprop & Adam &  SGD \\ [0.5ex]
	\cline {2-7}
	& $2 b / n $ & $ 2 b / n $  & $ 2 b / n $ & $ 2 b / n $  & $ 2 b / n $ & $ 2 b / n $  \\  
	\hline 
\end{tabular}
	}
	\caption{Complexity measure for each iteration of the algorithms for a finite-sum minimization problem involving $ n $ functions. Sub-sampled variants of Newton-MR and Newton-CG are referred to, respectively as ``ssNewton-MR'' and ``ssNewton-CG''. We also use $ t $ and $ \ell $ to denote, respectively, the total number of iterations for the corresponding inner solver and the line-search. The sample size for estimating the Hessian is denoted by $ s $, whereas $ b $ refers the mini-batch size for first-order methods. \label{table:oracle_calls}}
\end{table}

\paragraph{Parameters}
Throughout this section, we set the maximum iterations of the underlying inner solver, e.g., MINRES-QLP or CG, to $ 200 $ with an inexact tolerance of $ \theta=10^{-2} $. In \cref{alg:Newton_invex_sub}, for the termination criterion and the Armijo line-search parameter, respectively, we set $ \tau = 10^{-10} $ and $ \rho=10^{-4} $. Both Newton-CG and Gauss-Newton use the standard Armijo line-search whose parameter is also set to $ \rho=10^{-4} $. The parameter of the strong Wolfe curvature condition, used for L-BFGS, is $ 0.4 $. The history size of L-BFGS will be kept at $ 20 $ past iterations.
In the rest of this section, all methods are always initialized at $ \xx_{0} = \bm{0} $. For Newton-type methods, the initial trial step-size in line-search is always taken to be one. 
%The step-size of first-order methods can be found in \cref{table:learning_rate}. 
%\begin{table}	
%	\centering
%	\begin{tabular}{lllllll}
%		\toprule
%		Problem & Momentum & Adagrad & Adadelta & RMSprop & Adam & SGD   \\
%		\midrule
%		Softmax (MNIST) & $ 10^{-5} $ & $ 10^{-4} $ & $ 10^{-1} $ & $ 10^{-5} $ & $ 10^{-5} $ & $ 10^{-5} $ \\ 
%		\midrule
%		Softmax (Cifar10) & $ 10^{-7} $ & $ 10^{-6} $ & $ 10^{-4} $ & $ 10^{-6} $ & $ 10^{-7} $ & $ 10^{-7} $ \\ 
%		\midrule
%		GMM & $ 10^{-8} $ & $ 10^{-1} $ & $ 10 $ & $ 10^{-2} $ & $ 10^{-2} $ & $ 10^{-8} $ \\ 
%		\bottomrule
%	\end{tabular}
%	\caption{Step-sizes used for various first-order methods. For all problems, the step-size for each method was chosen after careful fine-tuning to obtain the best performance. \label{table:learning_rate}}
%\end{table}

\subsubsection{Softmax regression}
\label{sec:softmax}

Here, we consider the softmax cross-entropy minimization problem without regularization. More specifically, we have
\begin{align}
\label{eq:softmax}
f(\xx) \triangleq \mathcal{L}(\xx_{1},\xx_{2},\ldots,\xx_{C-1})  = \sum_{i=1}^{n} \left(\log \left(1+\sum_{c' = 1 }^{C-1} e^{\dotprod{\aa_{i}, \xx_{c'}}}\right)  - \sum_{c = 1}^{C-1}\mathbf{1}(b_{i} = c) \dotprod{\aa_{i},\xx_{c}}\right),
\end{align}
where $\{\aa_i, b_i\}_{i=1}^{n}$ with $\aa_i \in \reals^{p}$, $b_i \in \{0, 1, \dots, C\}$ denote the 
training data, $ C $ is the total number of classes for each input data $ \aa_{i} $ and $\xx = (\xx_{1},\xx_{2},\dots,\xx_{C-1})$. Note that, in this case, we have $ d = (C-1) \times p $. It can be shown that, depending on the data, \cref{eq:softmax} is either strictly-convex or merely weakly-convex.
In either case, however, it has been shown in \cite{roosta2018newton} that $ \nu = 1 $, i.e.,  $\nabla f(\xxk) \in \text{Range}\left(\nabla^{2} f(\xxk)\right)$. 

\Cref{fig:softmax_2nd_mnist}, \ref{fig:softmax_1st_cifar10} and \ref{fig:softmax_1st_cifar10_fullg} depict, respectively, the performance of variants of Newton-MR as compared with other Newton-type methods and several (stochastic) first-order methods. As it can be seen, all variants of Newton-MR are not only highly efficient in terms of oracle calls, but also they are very competitive in terms of ``wall-clock'' time. In fact, we can see that sub-sampled Newton-MR converges faster than all first-order methods. This can be attributed to moderate per-iteration cost of sub-sampled Newton-MR, which is coupled with far fewer overall iterations.

\begin{figure}[htb]
	\centering
	\subfigure[$ f(\xxk) $ vs.\ Oracle calls]
	{\includegraphics[scale=0.35]{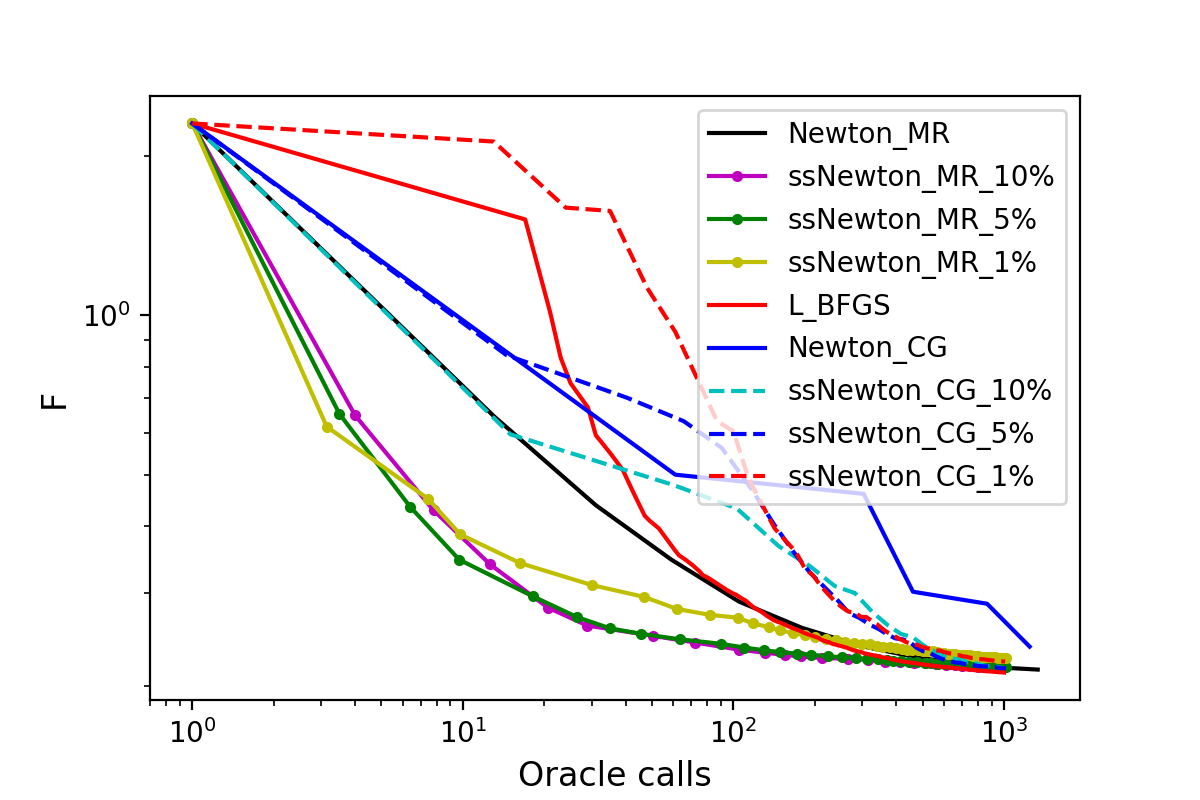}}
	\subfigure[$ \vnorm{\nabla f(\xxk)} $ vs.\ Oracle calls]
	{\includegraphics[scale=0.35]{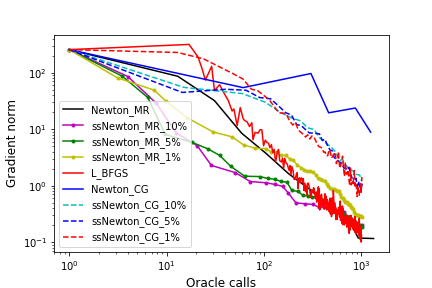}}
	\subfigure[$ f(\xxk) $ vs.\ Time (sec)]
	{\includegraphics[scale=0.35]{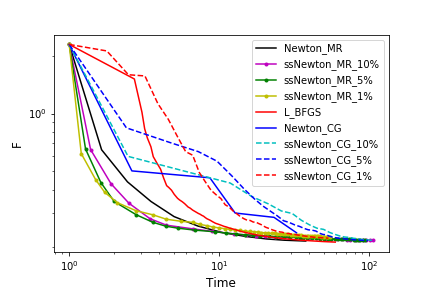}}
	\caption{Comparison among Newton-type methods on \cref{eq:softmax} using MNIST dataset. Here, sample sizes are chosen as $ s = 0.1n, 0.05n$ and $0.01n$, e.g., ``ssNewton-MR\_10\%'' uses $ s = 0.1n $. \label{fig:softmax_2nd_mnist}}
\end{figure}

\begin{figure}[htb]
	\centering
	\subfigure[$ f(\xxk) $ vs.\ Oracle calls]
	{\includegraphics[scale=0.35]{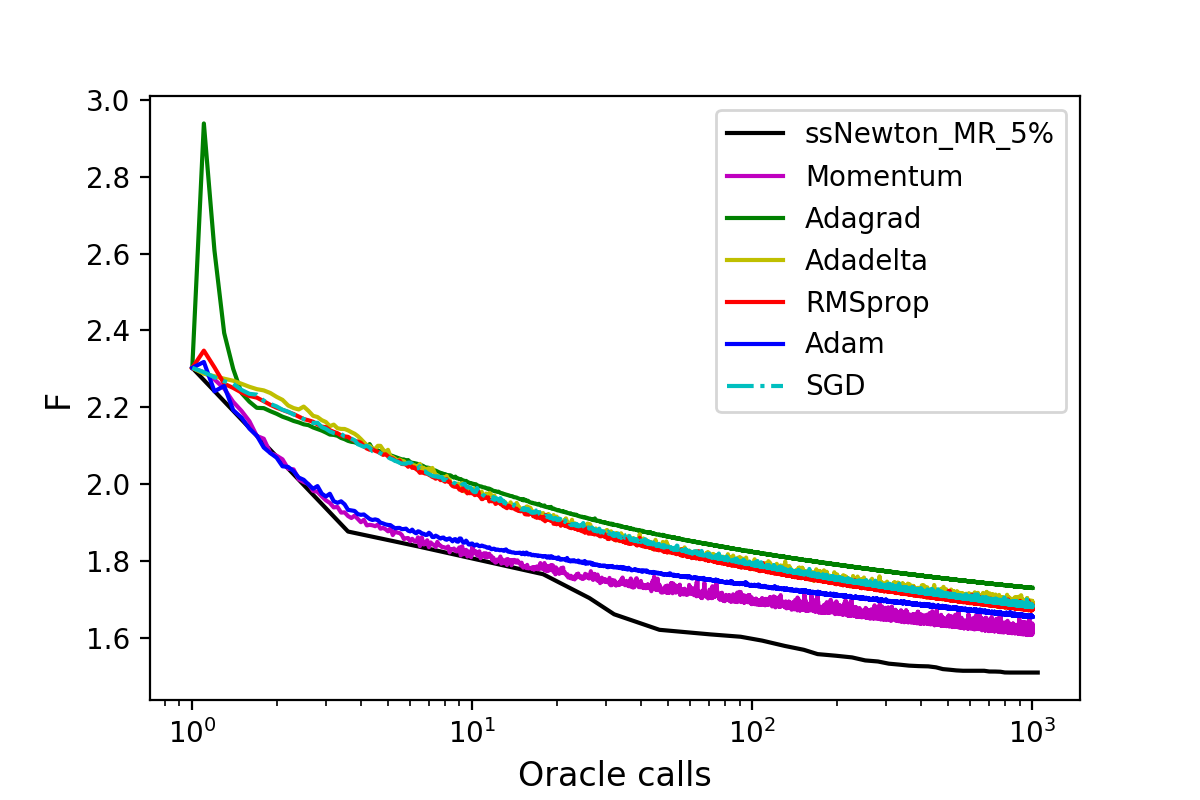}}
	\subfigure[$ \vnorm{\nabla f(\xxk)} $ vs.\ Oracle calls]
	{\includegraphics[scale=0.35]{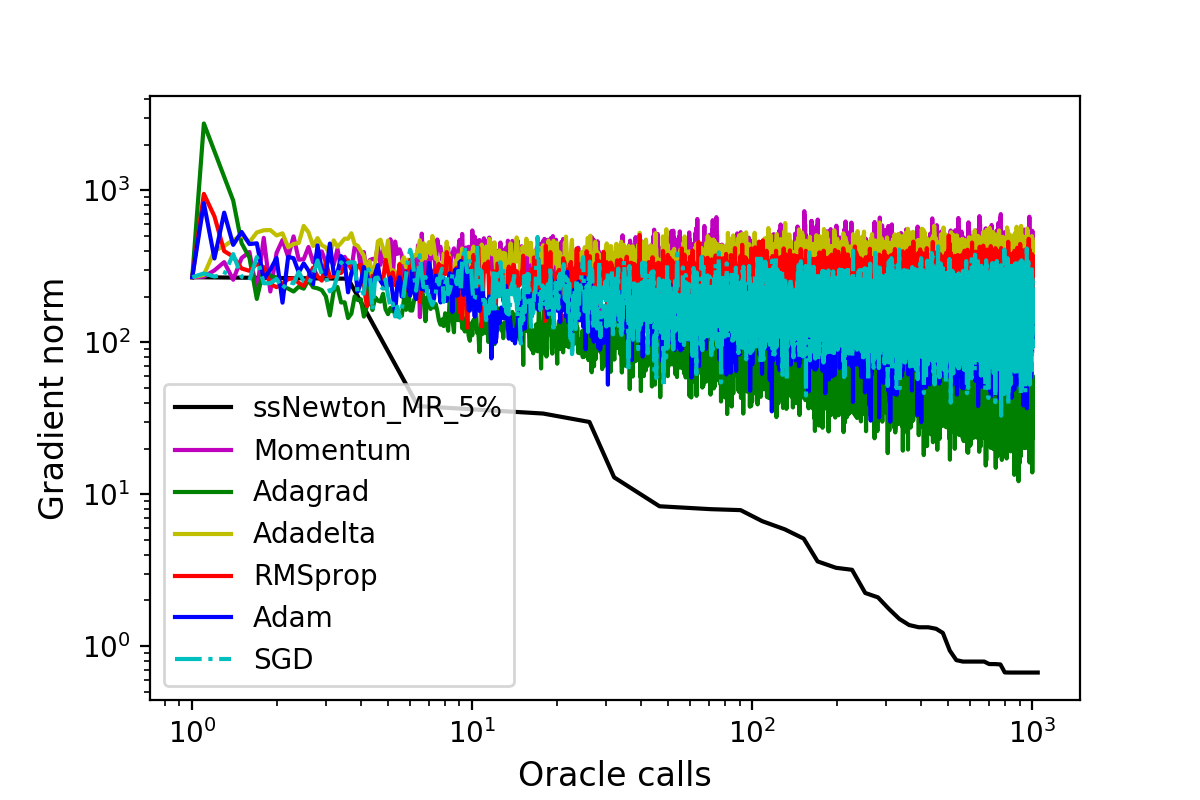}}
	\subfigure[$ f(\xxk) $ vs.\ Time (sec)]
	{\includegraphics[scale=0.35]{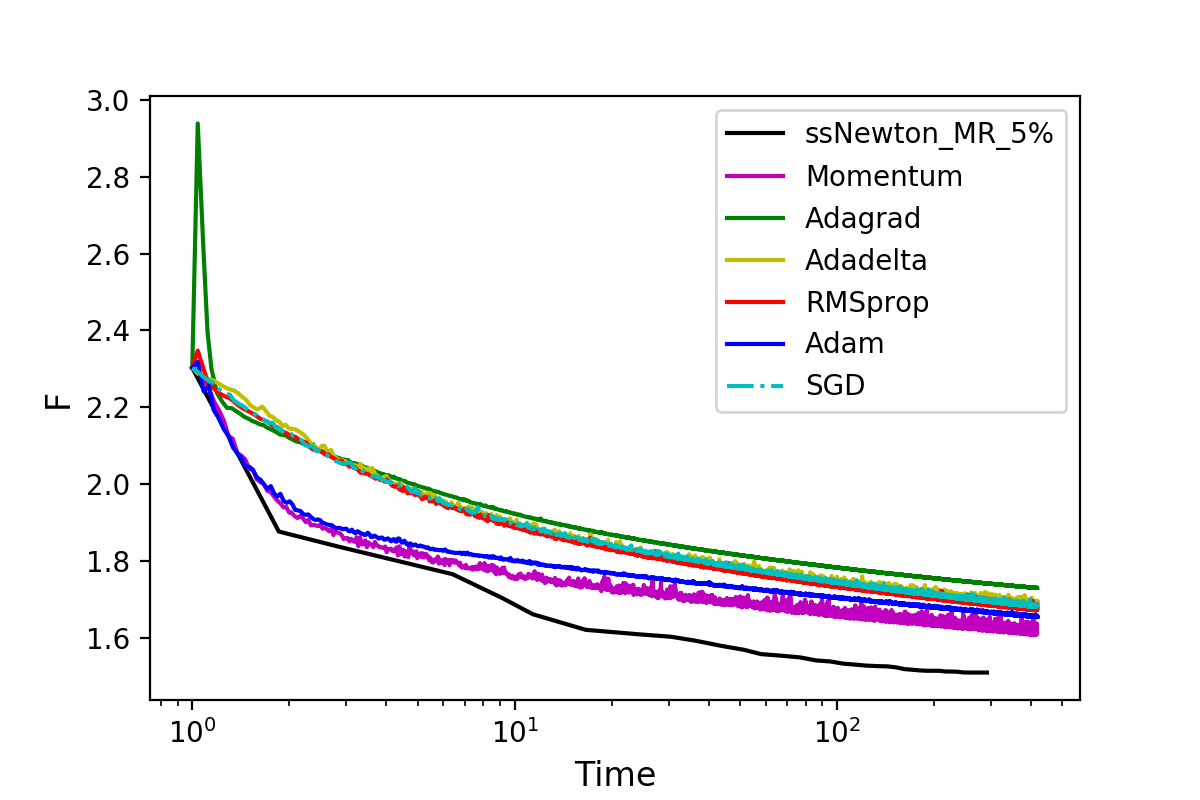}}
	\caption{Comparison among sub-sampled Newton-MR and several first-order methods on \cref{eq:softmax} using Cifar10 dataset. Here, sample/mini-batch sizes are $ s = b = 0.05n$. \label{fig:softmax_1st_cifar10}}
\end{figure}

\begin{figure}[htb]
	\centering
	\subfigure[$ f(\xxk) $ vs.\ Oracle calls]
	{\includegraphics[scale=0.35]{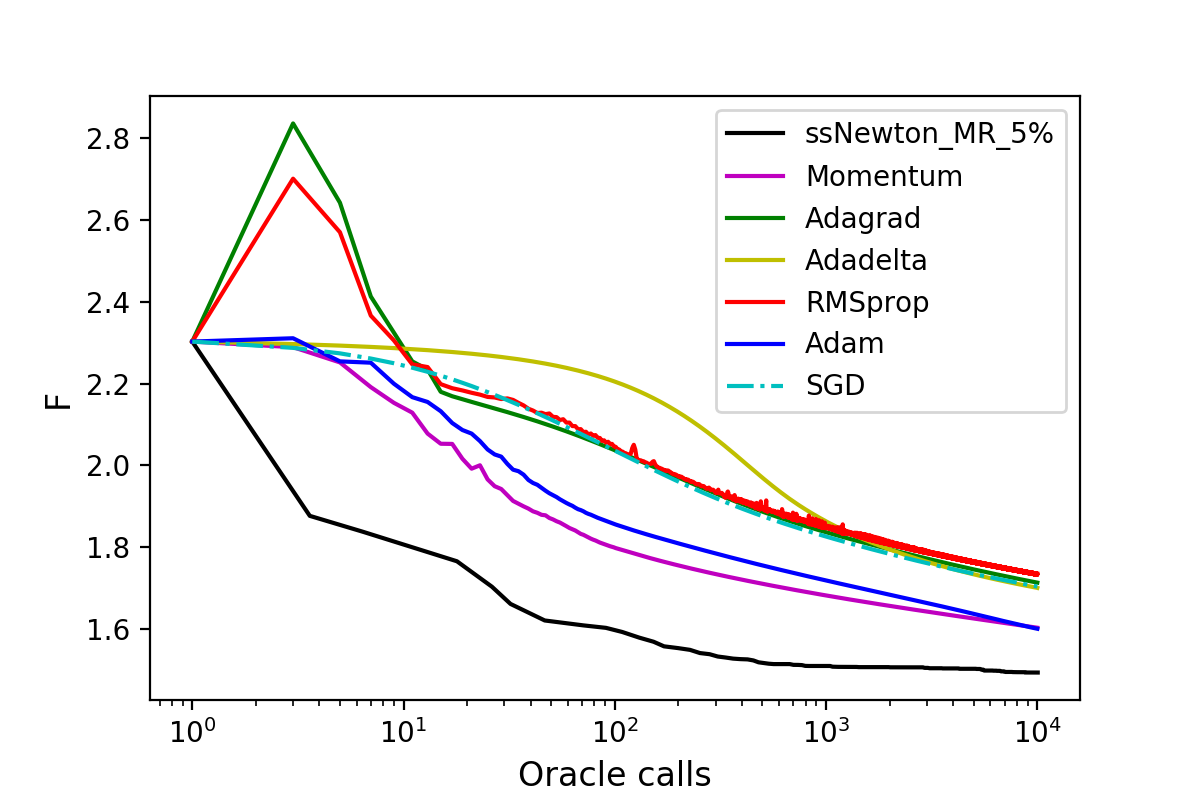}}
	\subfigure[$ \vnorm{\nabla f(\xxk)} $ vs.\ Oracle calls]
	{\includegraphics[scale=0.35]{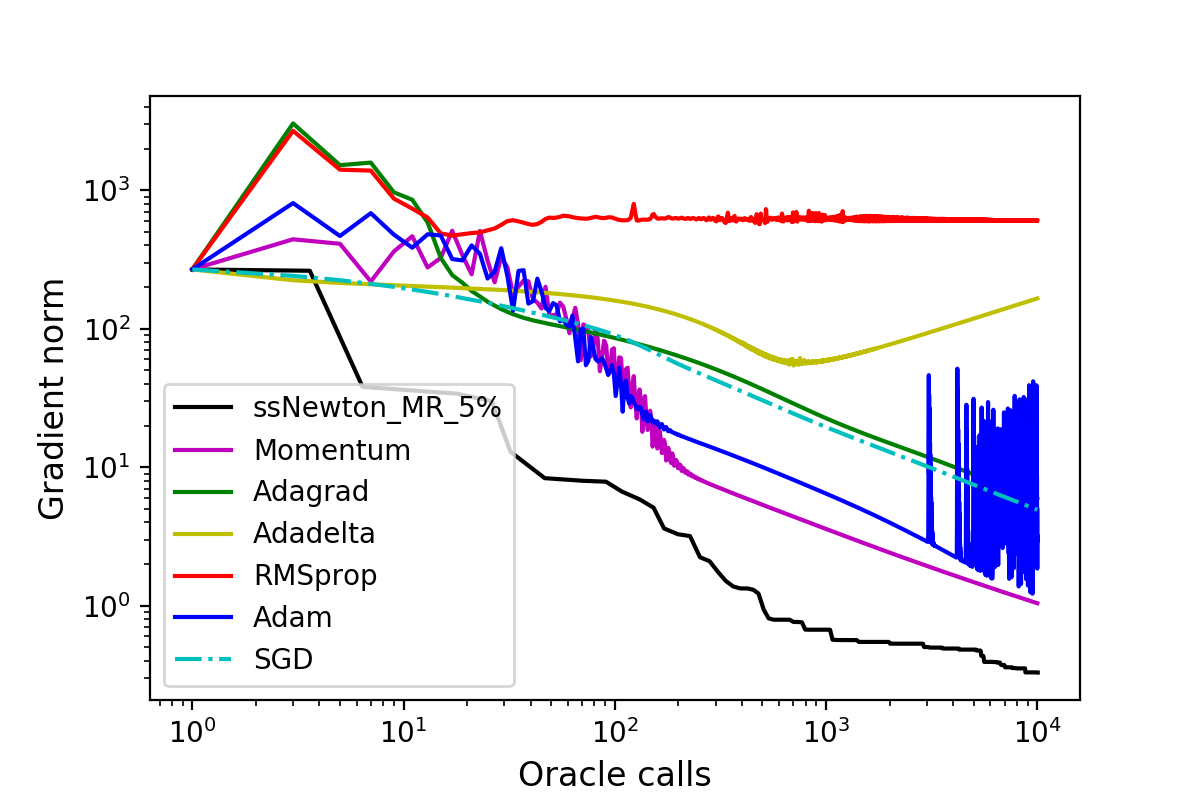}}
	\subfigure[$ f(\xxk) $ vs.\ Time (sec)]
	{\includegraphics[scale=0.35]{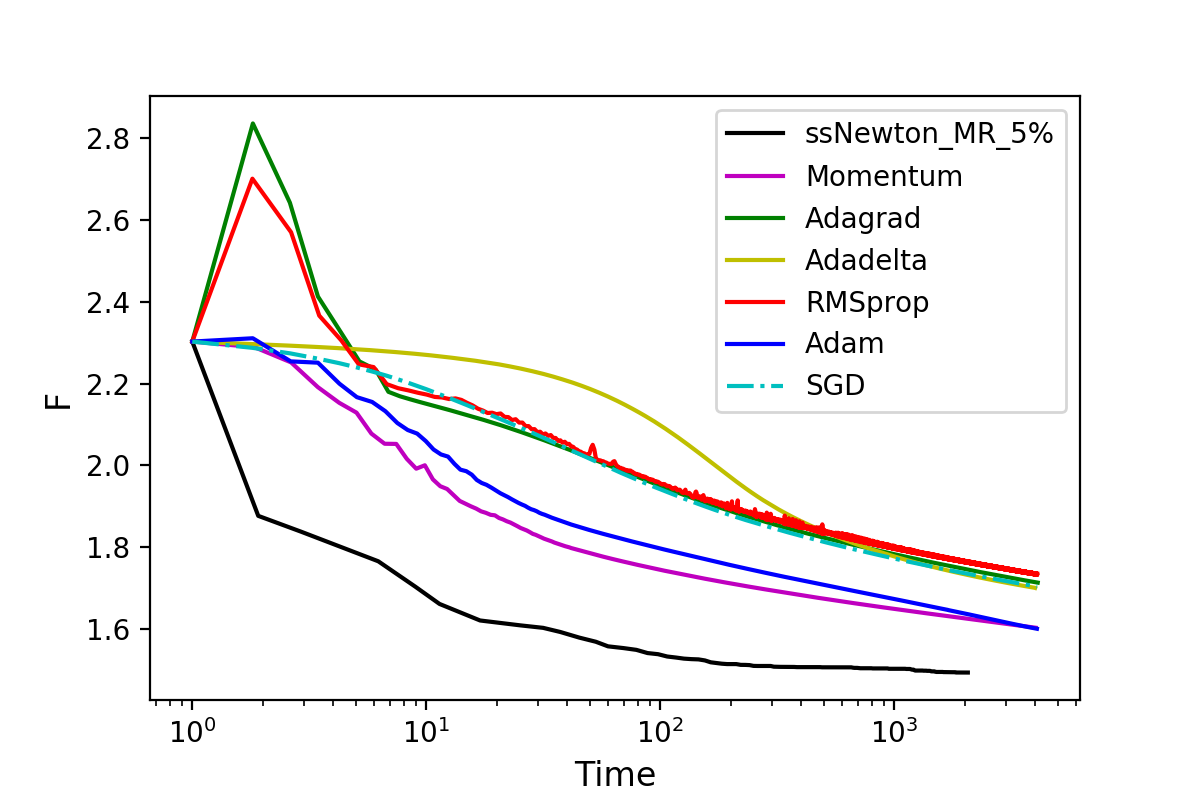}}
	\caption{Comparison among sub-sampled Newton-MR and several first-order methods on \cref{eq:softmax} using Cifar10 dataset. Here, sample/mini-batch sizes are $ s = 0.05n$, $ b = n $. \label{fig:softmax_1st_cifar10_fullg}}
\end{figure}

We then compare the performance of Newton-MR and Newton-CG as it relates to sensitivity to Hessian perturbations. We consider full and sub-sampled variants of both algorithms for a range of sample-sizes. \cref{fig:mr_vs_cg_mnist,fig:mr_vs_cg_hapt} clearly demonstrate that Newton-MR exhibits a great deal of robustness to Hessian perturbations, which amount to better performance for crude Hessian approximations. This is in sharp contrast to Newton-CG, which requires more accurate Hessian estimations to perform comparatively. Note the large variability in the performance of sub-sampled Newton-CG as compared with rather uniform performance of sub-sampled Newton-MR.

\begin{figure}[htb]
	\centering
	\subfigure[$ f(\xxk) $ vs.\ Iterations]
	{\includegraphics[scale=0.4]{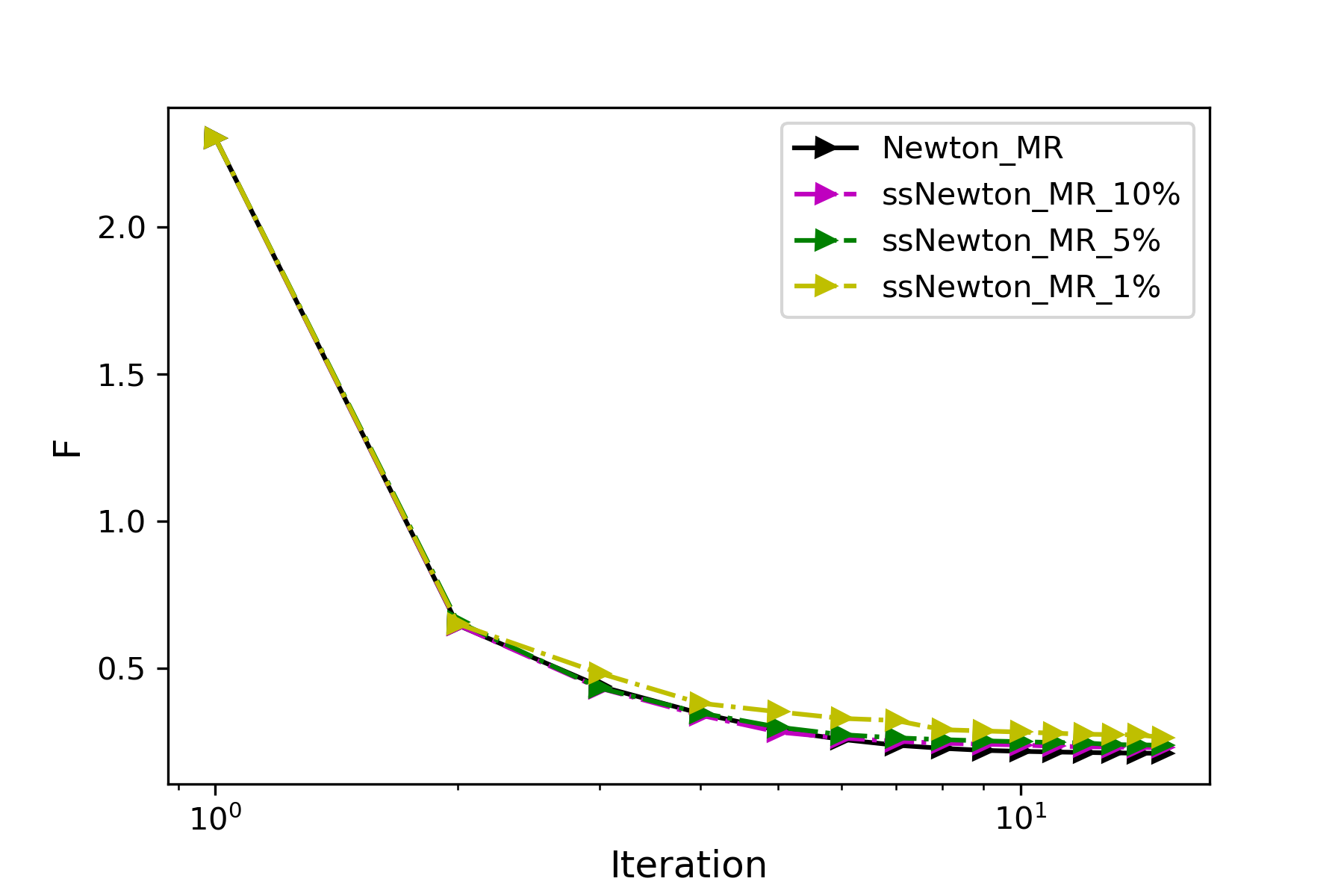}}
	\subfigure[$ \| \nabla f(\xxk) \| $ vs.\ Iterations]
	{\includegraphics[scale=0.4]{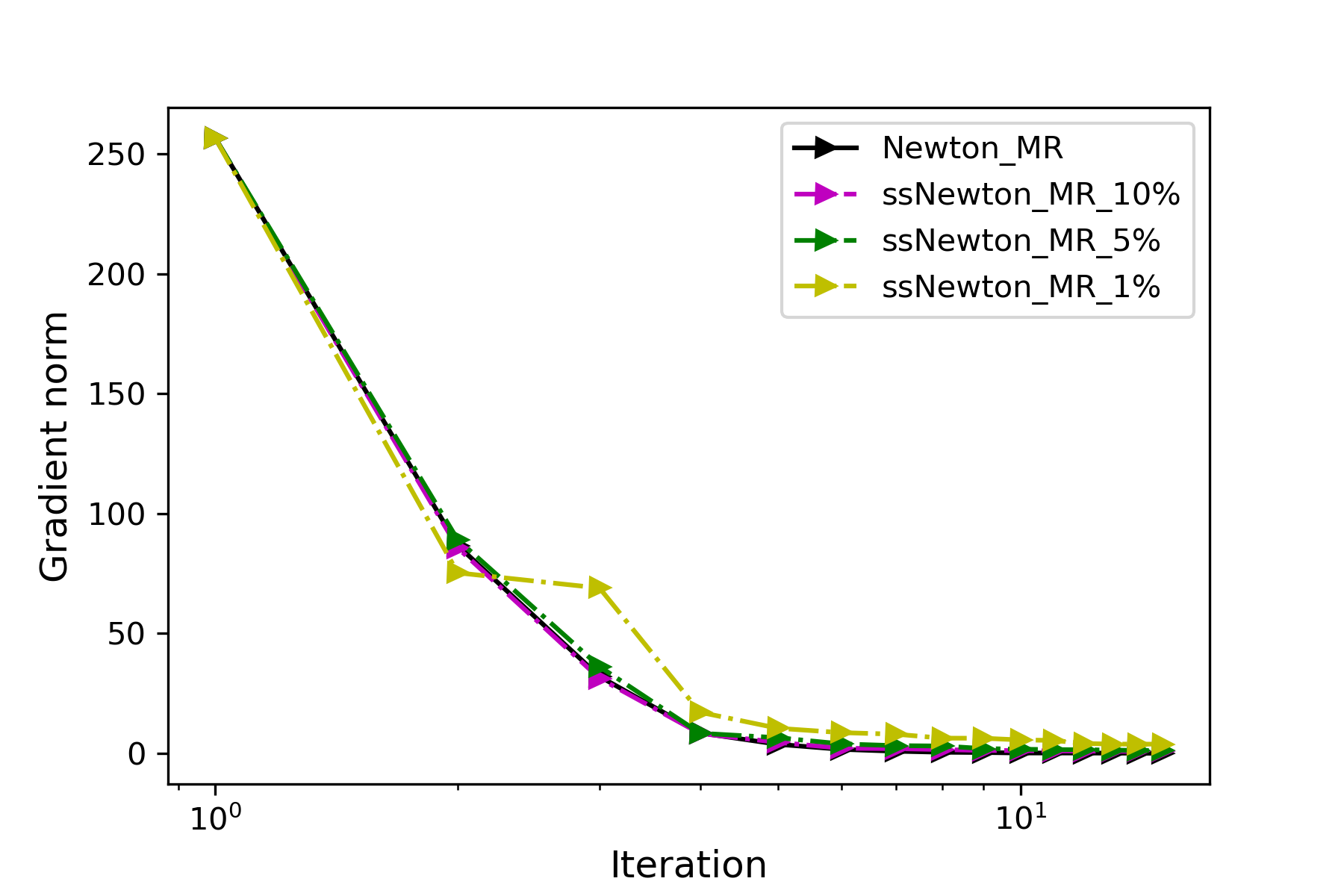}} \\
	\subfigure[$ f(\xxk) $ vs.\ Iterations]
	{\includegraphics[scale=0.4]{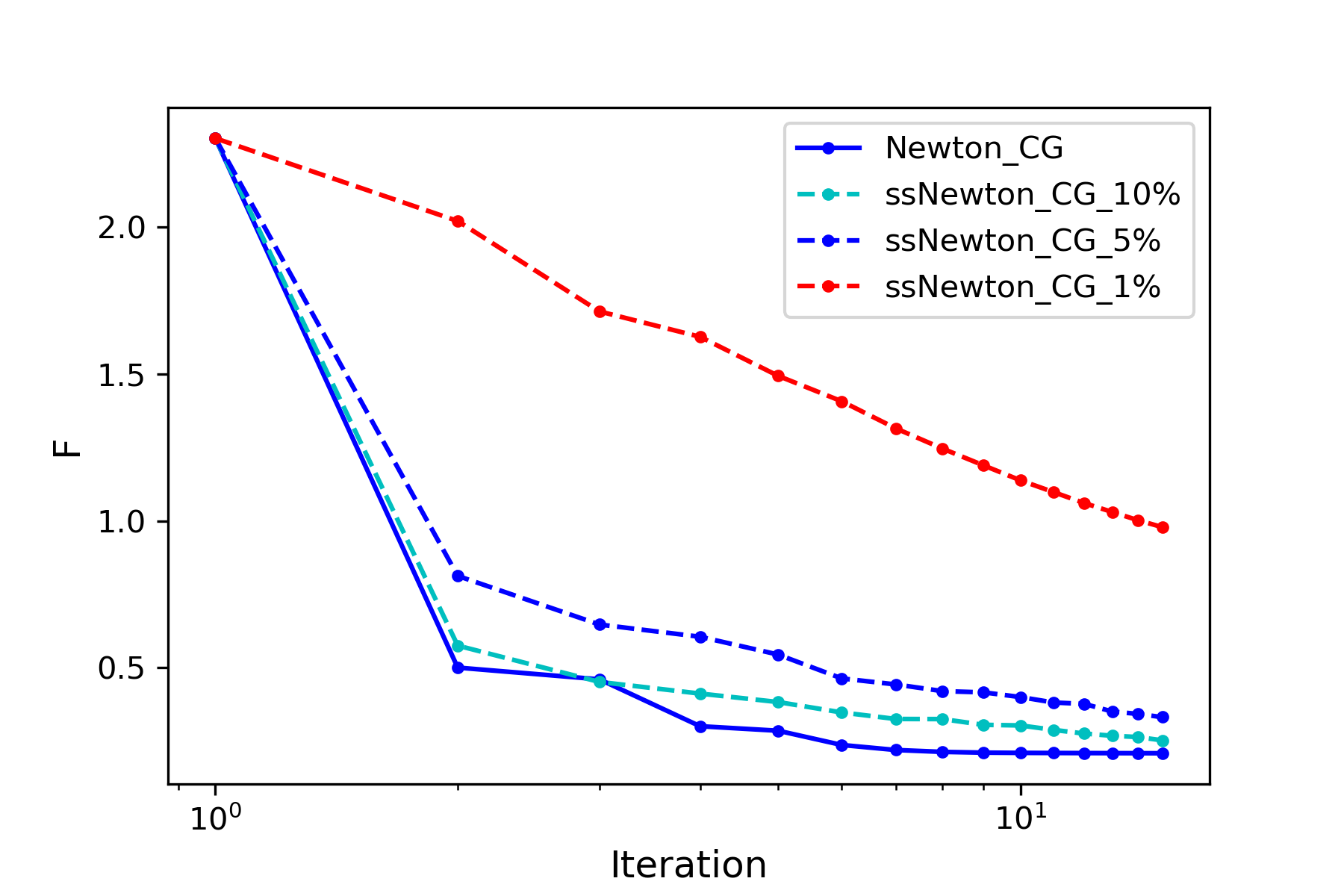}}
	\subfigure[$ \| \nabla f(\xxk) \| $ vs.\ Iterations]
	{\includegraphics[scale=0.4]{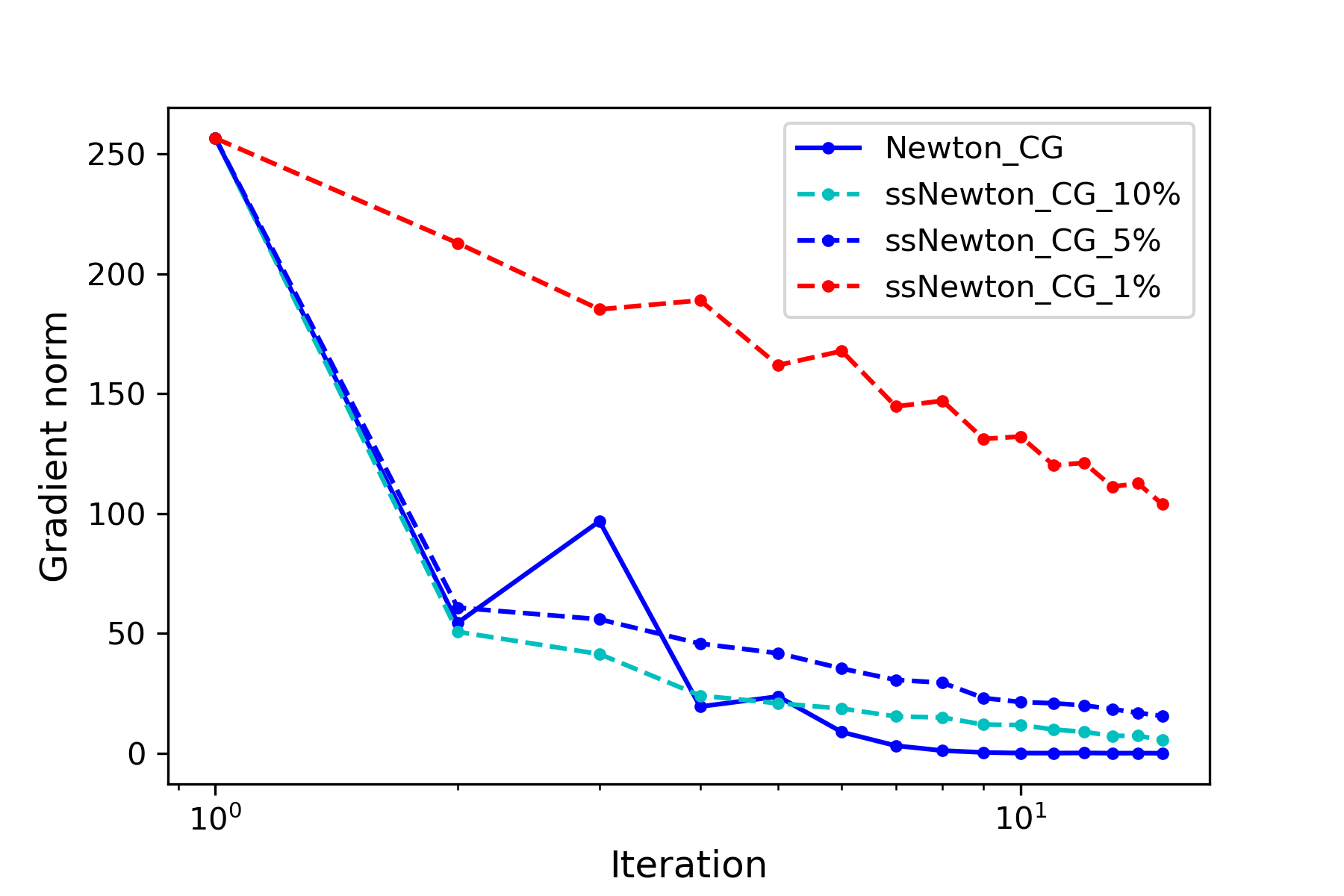}}
	\caption{Stability comparison between full and sub-sampled variants of Newton-MR and Newton-CG using $ s = 0.1n, 0.05n, 0.01n$  in \cref{table:oracle_calls} on \cref{eq:softmax} with MNIST dataset. \label{fig:mr_vs_cg_mnist}}
\end{figure}

\begin{figure}[htb]
\centering
\subfigure[$ f(\xxk) $ vs.\ Iterations]
{\includegraphics[scale=0.4]{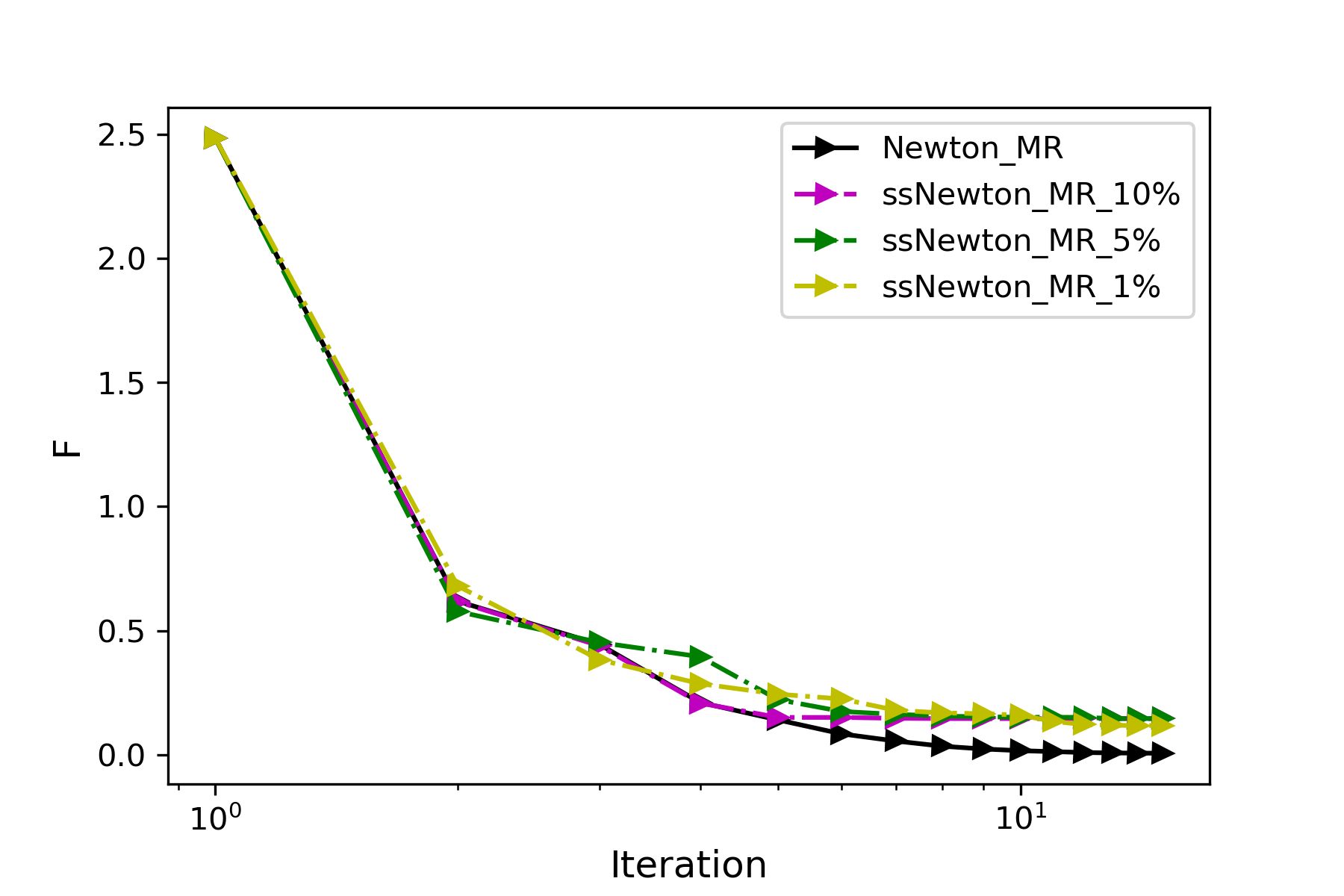}}
\subfigure[$ \| \nabla f(\xxk) \| $ vs.\ Iterations]
{\includegraphics[scale=0.4]{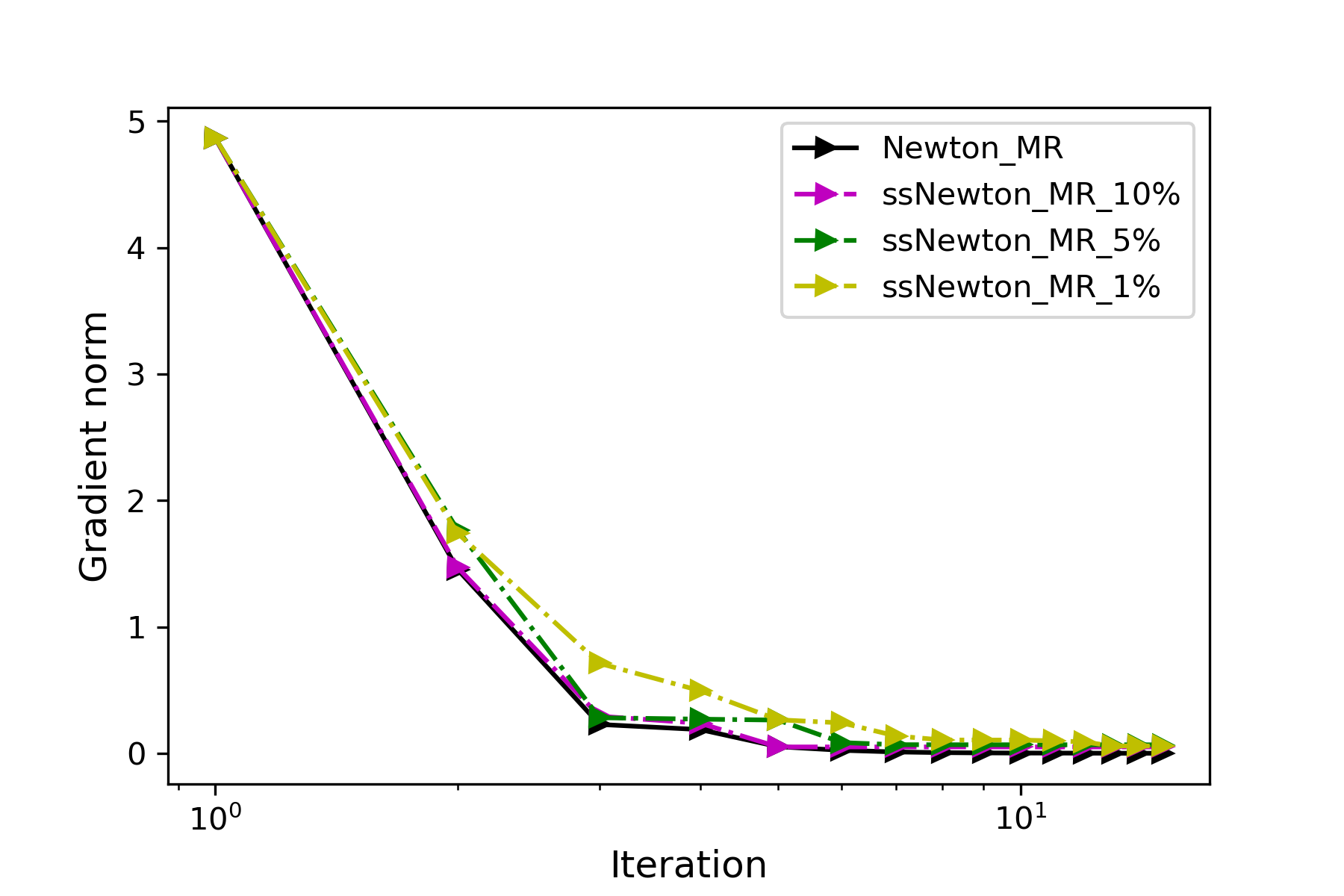}} \\
\subfigure[$ f(\xxk) $ vs.\ Iterations]
{\includegraphics[scale=0.4]{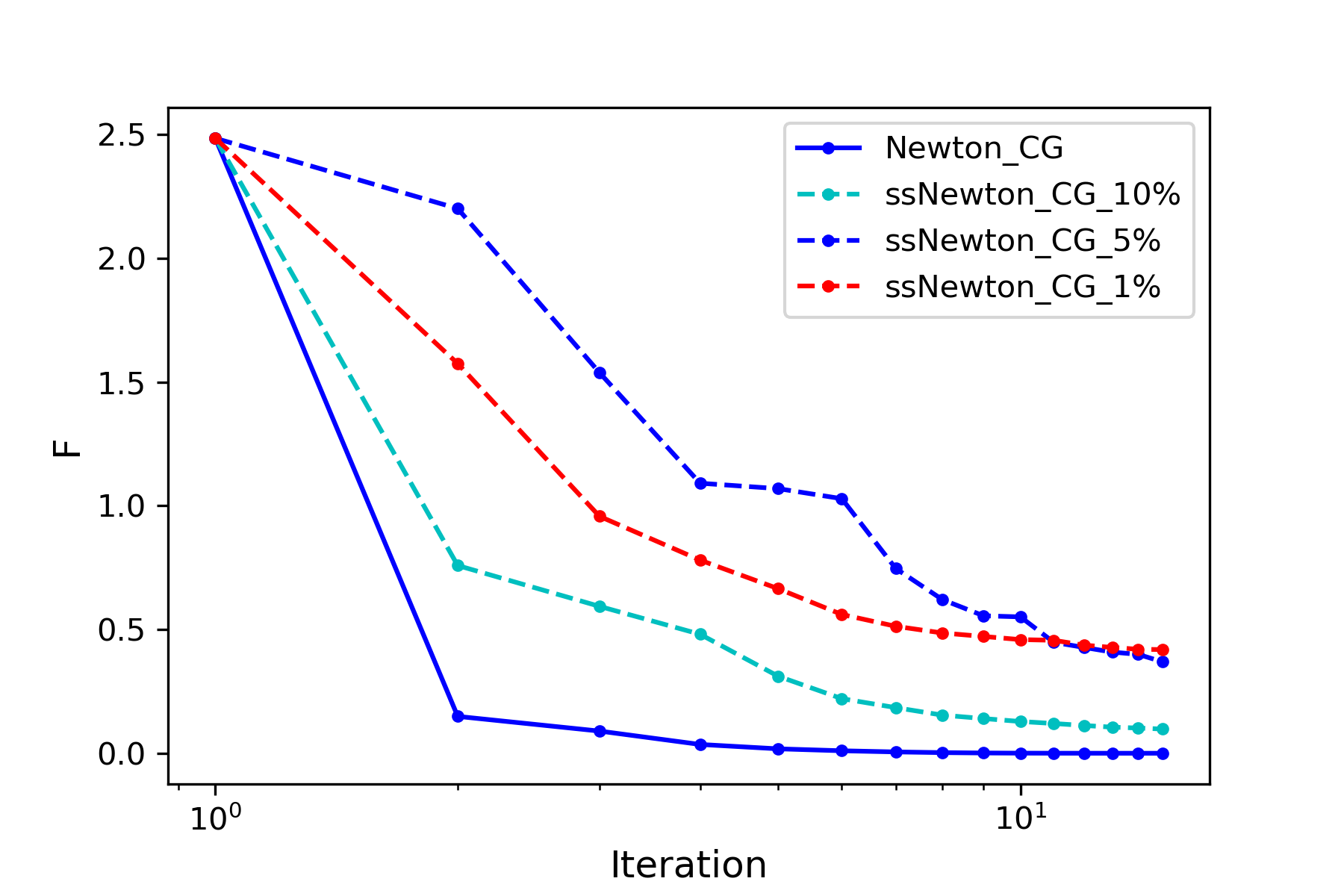}}
\subfigure[$ \| \nabla f(\xxk) \| $ vs.\ Iterations]
{\includegraphics[scale=0.4]{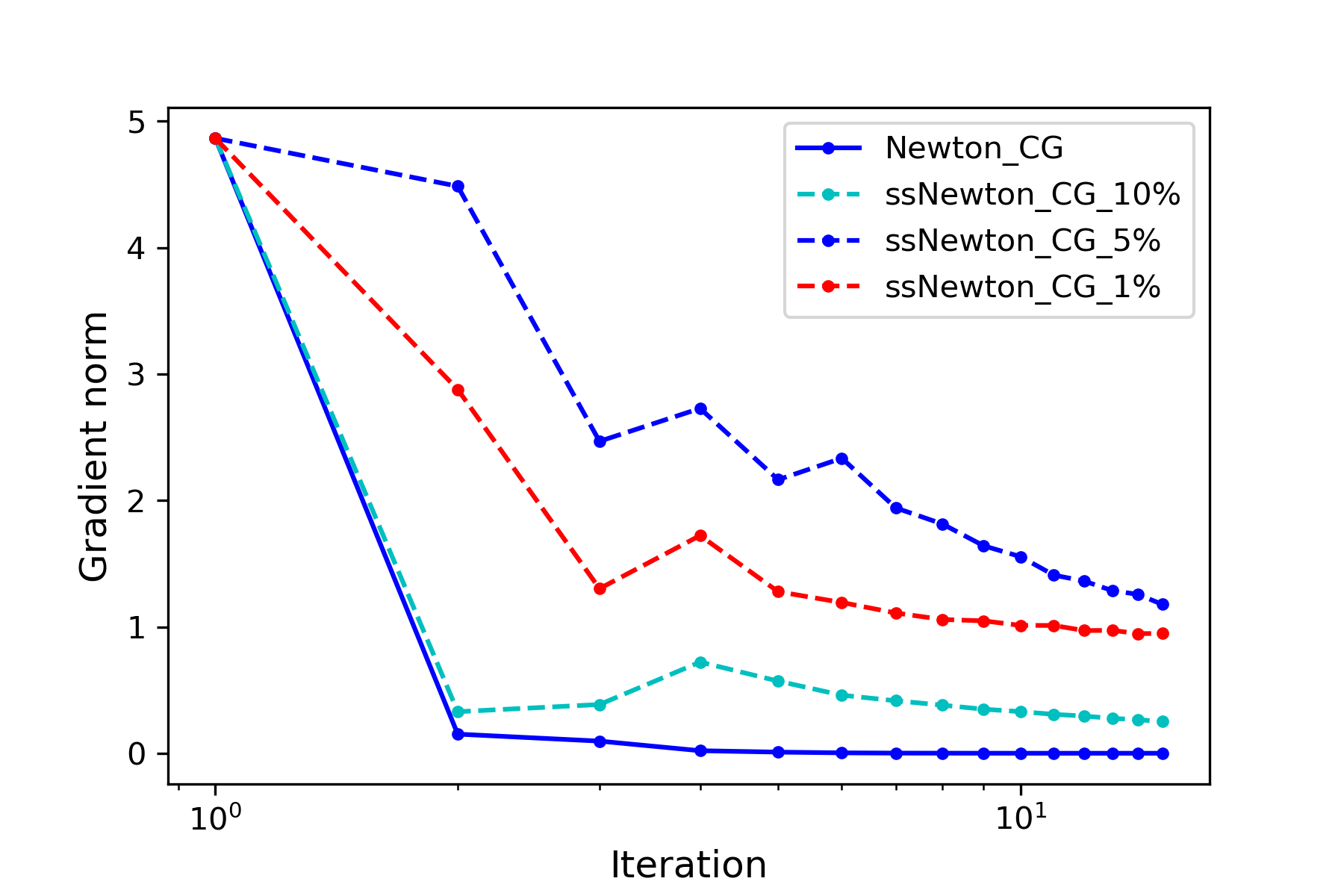}}
\caption{Stability comparison between full and sub-sampled variants of Newton-MR and Newton-CG using $ s = 0.1n, 0.05n, 0.01n$  in \cref{table:oracle_calls} on \cref{eq:softmax} with HAPT dataset. \label{fig:mr_vs_cg_hapt}}
\end{figure}

\subsubsection{Gaussian Mixture Model}
\label{sec:gmm}

Here, we consider an example involving a mixture of Gaussian densities. Although this problem is non-invex, it exhibits features that are close to being invex, e.g, small regions of saddle points and large regions containing global minimum \cite{mei2016landscape}. For simplicity, we consider a mixture model with two Gaussian components as 
\begin{align}
\label{eq:gmm}     
f(\xx) \triangleq \mathcal{L}(w, \uu, \vv) =  -\sum_{i=1}^n \log\left( \zeta(w) \Phi \left(\aa_i; \uu, \bm{\Sigma}_{1}\right) + \left( 1-\zeta(w) \right) \Phi \left(\aa_i; \vv,\bm{\Sigma}_{2}\right) \right), 
\end{align}
where $ \Phi $ denotes the density of the p-dimensional standard normal distribution, $ \aa_i \in \reals^{p} $ are the data points, $ \uu, \vv \in \reals^{p}, \bm{\Sigma}_{1}, \bm{\Sigma}_{2} \in \reals^{p \times p} $ are the corresponding mean vectors and the covariance matrices of two Gaussian distributions, $ w \in \reals $ and $ \zeta(t) = 1/(1+e^{-t}) $ is to ensure that the mixing weight lies within $ [0,1] $. Here, one can show that $ \nu = 1 $.
Note that, here, $ \xx \triangleq [w; \uu; \vv] \in \reals^{2p+1}$. 
In each run, we generate $ 1,000$ random data points, generated from the mixture distribution \cref{eq:gmm} with $ p = 100 $, and ground truth parameters as $ w^{\star} \sim \bm{\mathcal{N}}[0,1], \uu^{\star} \sim \bm{\mathcal{N}}[-1,1],  \vv^{\star} \sim \bm{\mathcal{U}}[3,4] $. Covariance matrices are constructed randomly, with controlled condition number, such that they are not axis-aligned. To establish this, we first randomly generate two $ p \times p $ matrices whose elements are i.i.d.\ drawn from standard normal distribution and uniform distribution, respectively. We then find the corresponding orthogonal bases, $ \QQ_{1}, \QQ_{2} $, using QR factorization. We then set $ \bm{\Sigma}_{i} = \QQ_{i}^{\intercal} \DD^{-1} \QQ_{i} $ where $ \DD $ is a diagonal matrix whose diagonal entries are chosen equidistantly from the interval $ [0,10^8] $. This way the condition number of each $ \bm{\Sigma}_{i} $ is $ 10^8 $. In all the figures,
\begin{align*}
\text{Estimation error at $ k\th $ iteration} \triangleq \hf \left( \frac{|w_{k} - w^{\star}|}{w^{\star}} + \frac{\|[\uu_{k}; \vv_{k}] - [\uu^{\star};\vv^{\star}]\|}{\|[\uu^{\star};\vv^{\star}]\|} \right).
\end{align*}

\begin{figure}[htb]
	\centering
	% \vspace*{-3mm}
	\subfigure[$ f(\xx) $]
	{\includegraphics[scale=0.35]{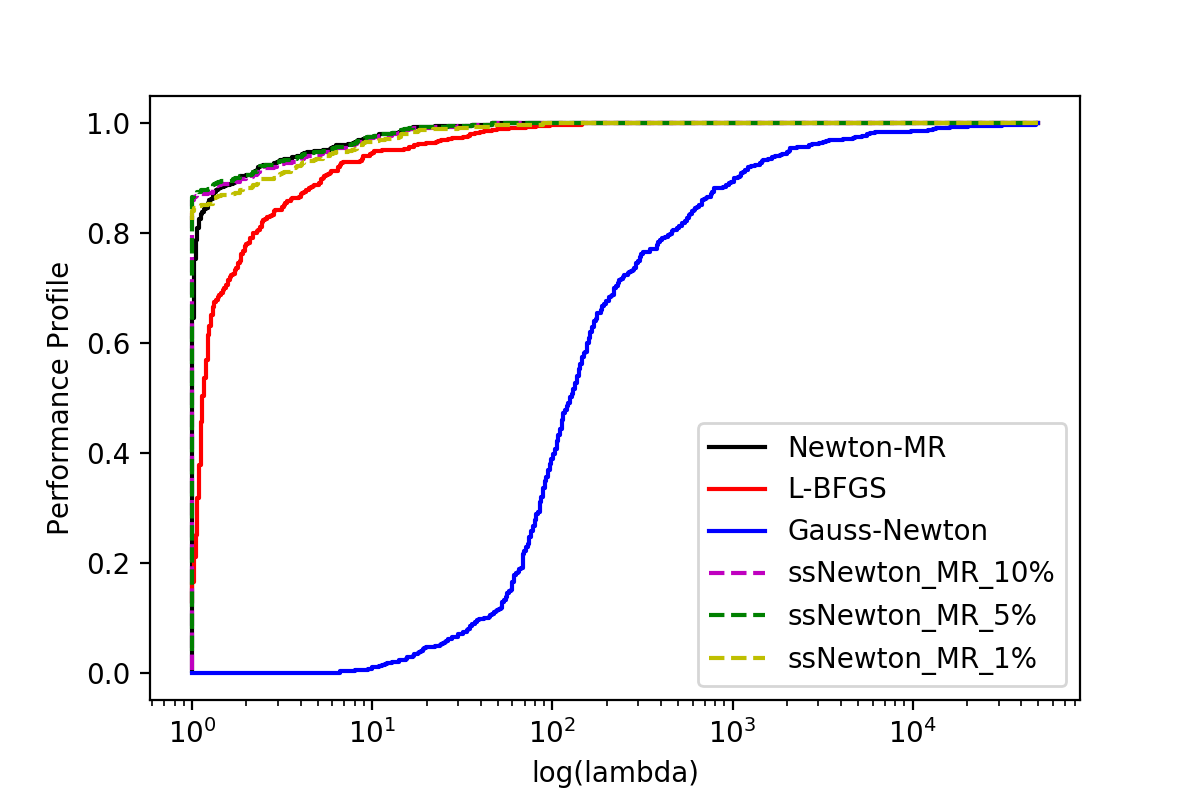}}
	\subfigure[$ \vnorm{\nabla f(\xx)} $]
	{\includegraphics[scale=0.35]{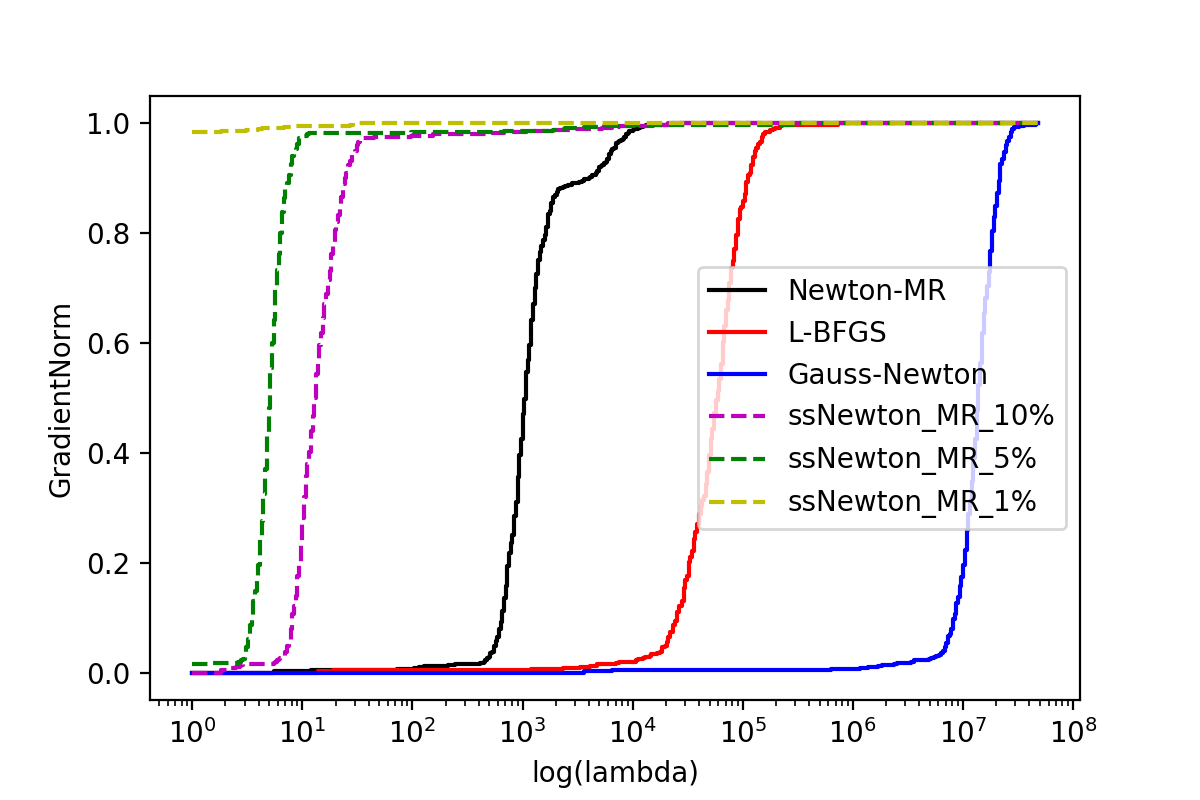}}
	\subfigure[Estimation error]
	{\includegraphics[scale=0.35]{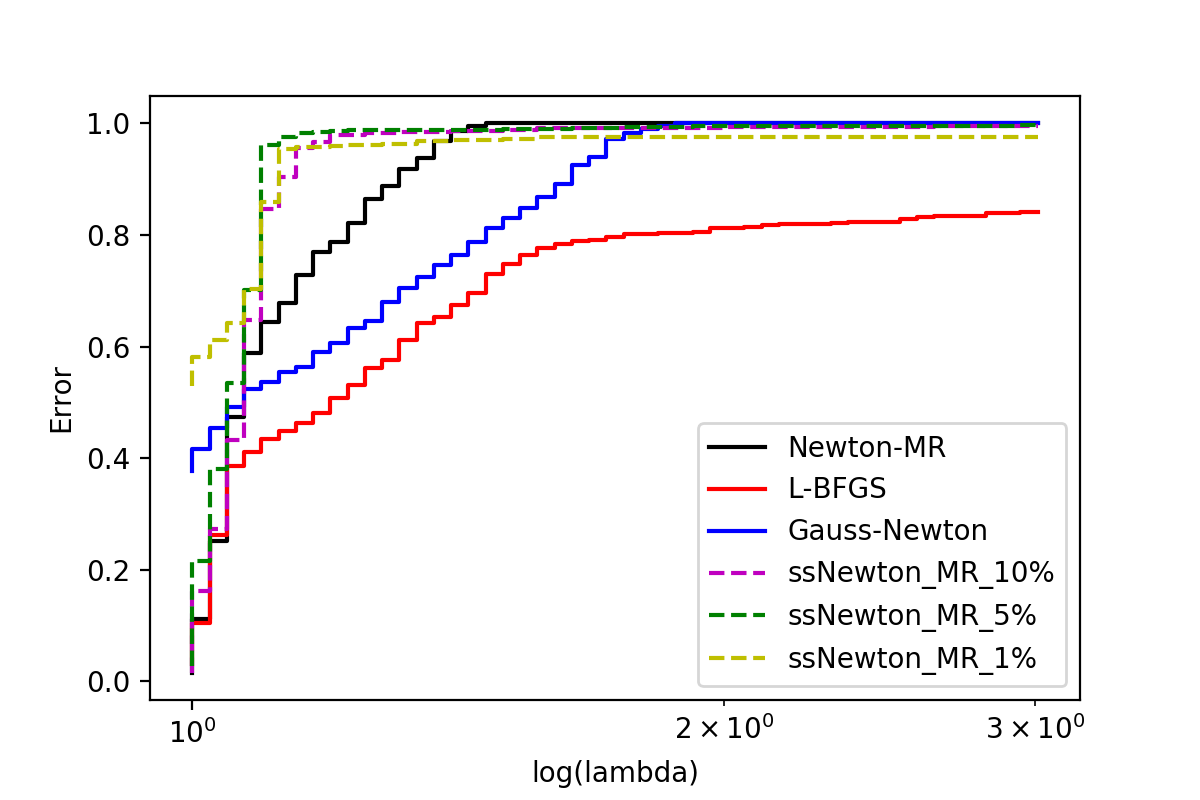}}
	\caption{Performance profile for 500 runs of various Newton-type methods for solving \eqref{eq:gmm} as detailed
		in \Cref{sec:gmm}. \label{fig:gmm_2nd}}
\end{figure}

\begin{figure}[htb]
	\centering
	% \vspace*{-3mm}
	\subfigure[$ f(\xx) $]
	{\includegraphics[scale=0.35]{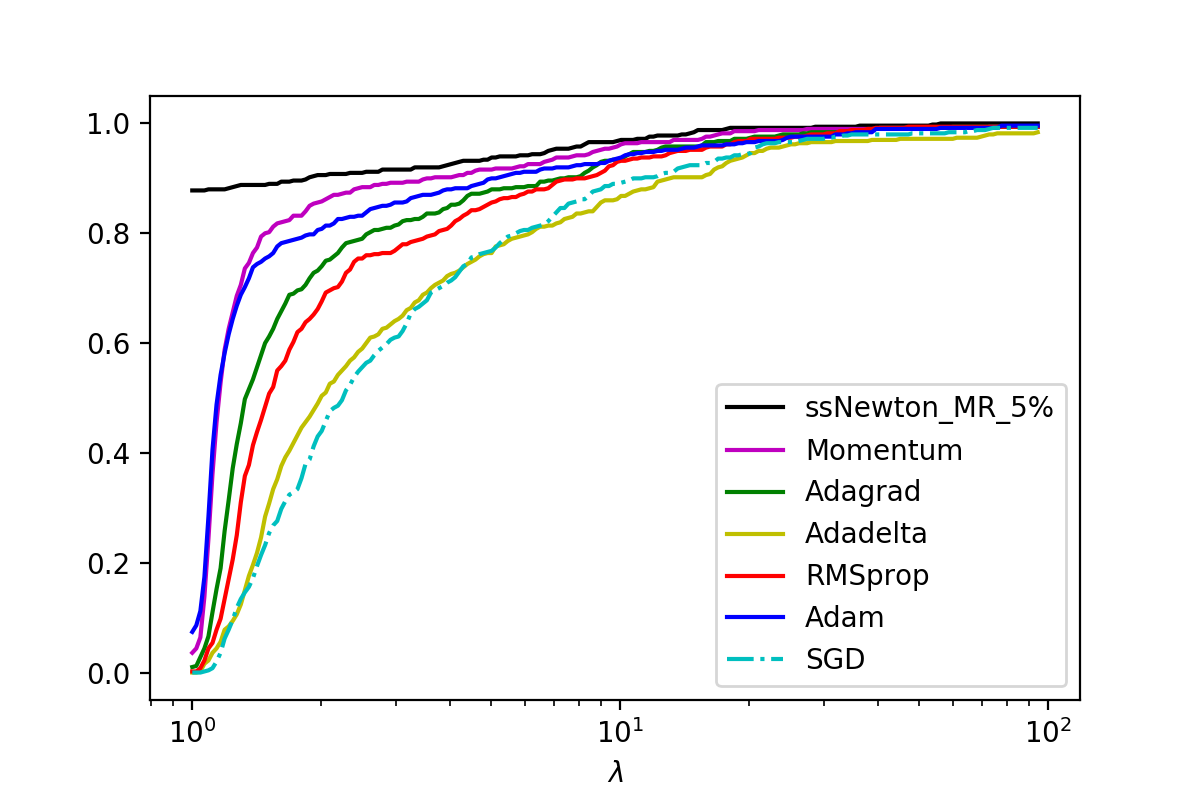}}
	\subfigure[$ \vnorm{\nabla f(\xx)} $]
	{\includegraphics[scale=0.35]{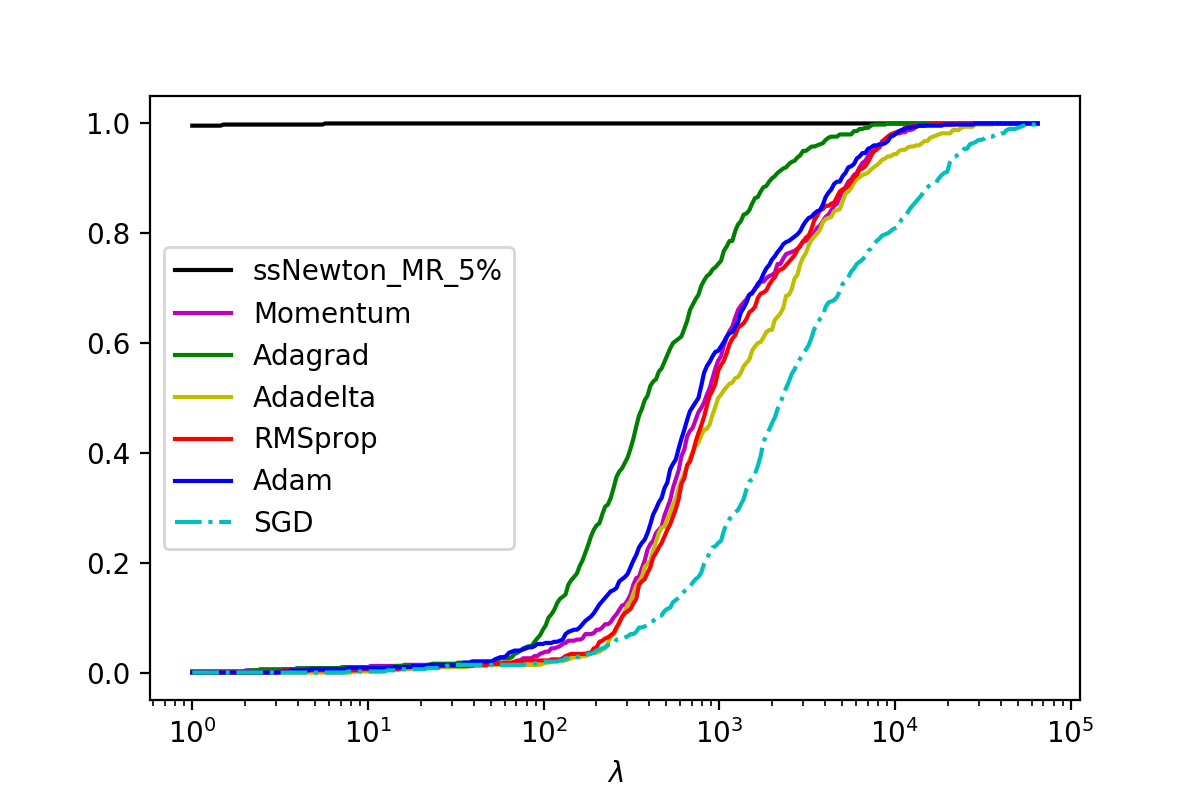}}
	\subfigure[Estimation error]
	{\includegraphics[scale=0.35]{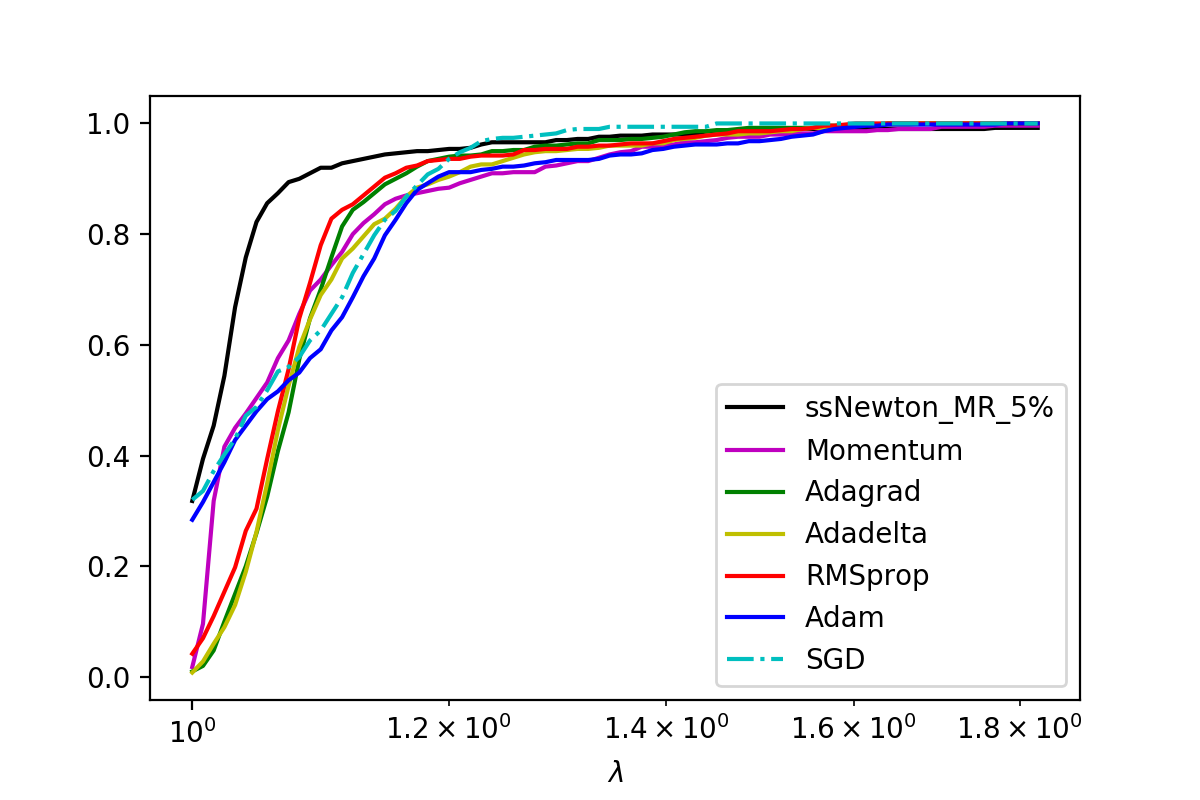}}
	\caption{Performance profile for 500 runs of Newton-MR variants and several first-order methods for solving \eqref{eq:gmm} as detailed in \Cref{sec:gmm}. Here, we have set $ s=b=0.05n $. \label{fig:gmm_1st_5}}
\end{figure}

\begin{figure}[!htbp]
	\centering
	% \vspace*{-3mm}
	\subfigure[$ f(\xx) $]
	{\includegraphics[scale=0.35]{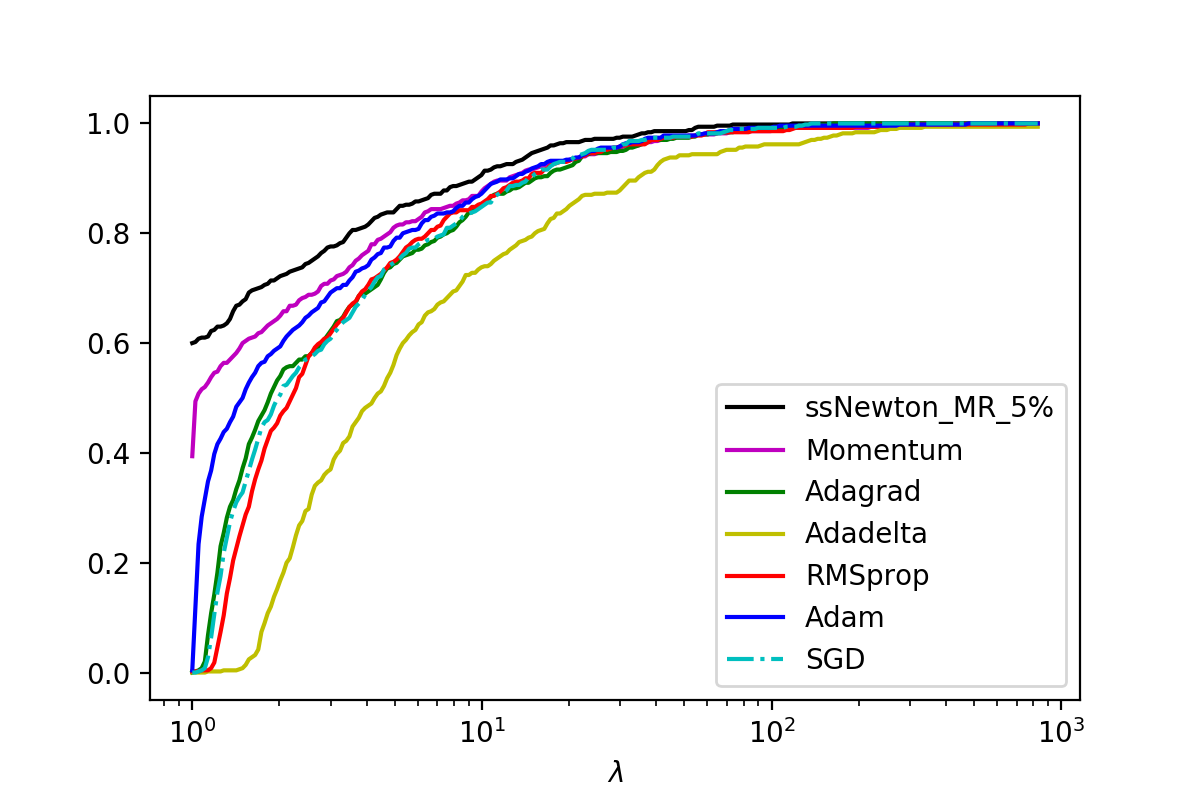}}
	\subfigure[$ \vnorm{\nabla f(\xx)} $]
	{\includegraphics[scale=0.35]{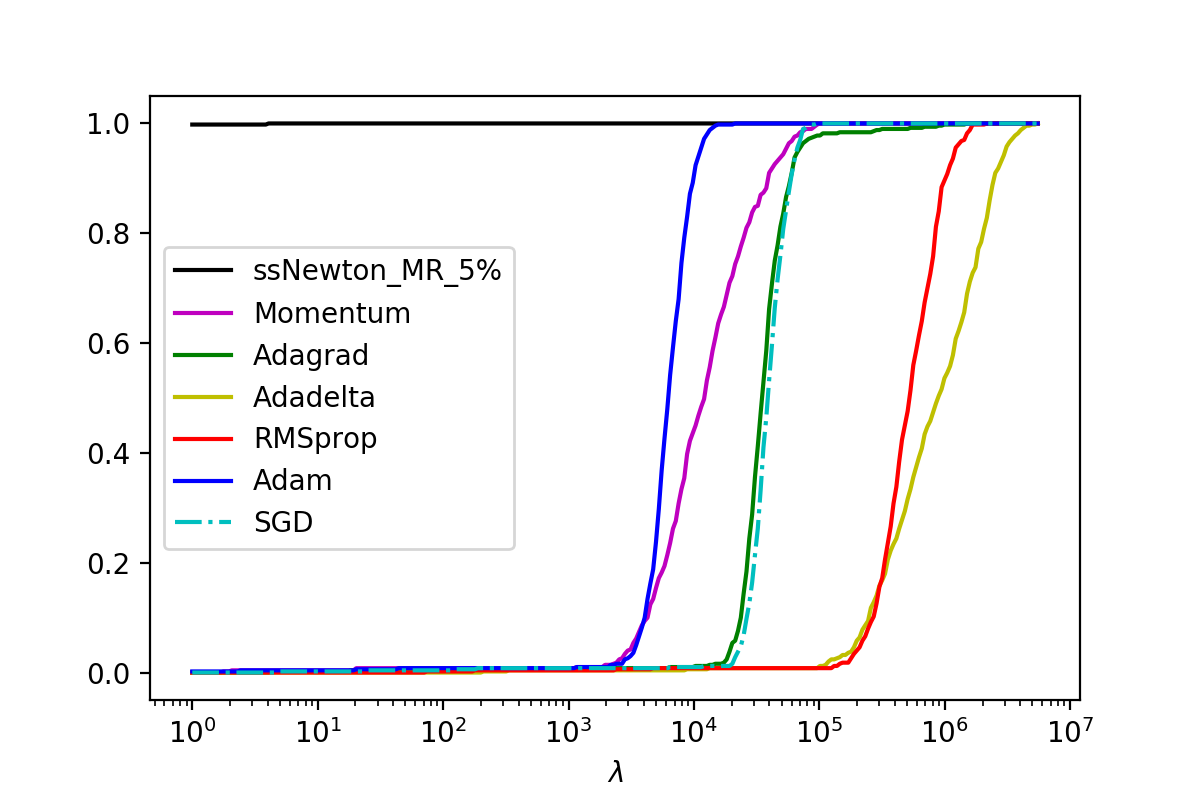}}
	\subfigure[Estimation error]
	{\includegraphics[scale=0.35]{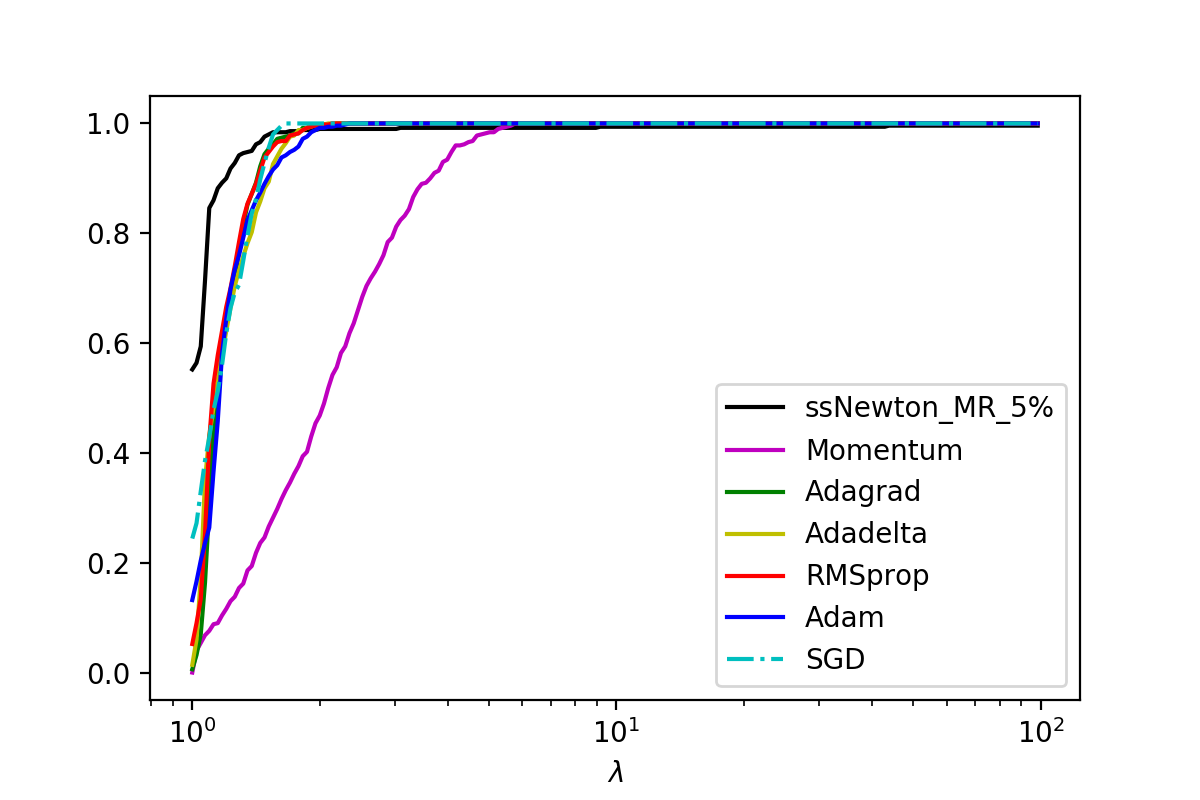}}
	\caption{Performance profile for 500 runs of Newton-MR variants and several first-order methods for solving \eqref{eq:gmm} as detailed in \Cref{sec:gmm}. Here, we have set $ s=0.05n $, $ b = n $. \label{fig:gmm_1st_full}}
\end{figure}

In our experiments, the classical Gauss-Newton method performed extremely poorly, and as a result we did not consider its sub-sampled variants. \Cref{fig:gmm_2nd} shows the performance profile plots \cite{dolan2002benchmarking,gould2016note} with 500 runs for Newton-type methods and \cref{fig:gmm_1st_5,fig:gmm_1st_full} depict the corresponding plots comparing variants of Newton-MR with several first-order methods using, respectively, sample/mini-batch sizes of $ 5\% $ and the full gradient. Recall that in performance profile plots, for a given $ \lambda $ in the x-axis, the corresponding value on the y-axis is the proportion of times that a given solver's performance lies within a factor $ \lambda $ of the best possible performance over all runs. 

As demonstrated by \cref{fig:gmm_2nd}, although L-BFGS performs competitively in terms of reducing the objective function, its performance in terms of parameter recovery and estimation error is far worse than all other methods. In contrast, all variants of Newton-MR have stable performance across all 500 runs, with sub-sampled variants exhibiting superior performance. \Cref{fig:gmm_1st_5} and \ref{fig:gmm_1st_full} also demonstrate similar superior performance compared with first-order algorithms.

\section{Conclusions}
\label{sec:conclusion}

We considered the convergence analysis of Newton-MR \cite{roosta2018newton} under inexact Hessian information in the form of additive noise perturbations. It is known that the pseudo-inverse of the Hessian is a discontinuous function of such perturbations. As a result, the pseudo-inverse of the perturbed Hessian can grow unboundedly with diminishing noise. However, our results indicate that it can indeed remain bounded along certain directions and under favorable conditions. We showed that the concept of inherently stable perturbations encapsulates situations under which Newton-MR with noisy Hessian remains stably convergent. Under such conditions, we established global and local convergence results for \cref{alg:Newton_invex_sub} using both exact and inexact updates. We argued that such stability analysis allows for the design of efficient variants of Newton-MR in which Hessian is approximated to reduce the computational costs in large-scale problems. We then numerically demonstrated the validity of our theoretical result and evaluated the performance of several such variants of Newton-MR as compared with various first and second-order methods.

\subsection*{Acknowledgment}
All authors are grateful for the support by the Australian Centre of Excellence for Mathematical and Statistical Frontiers (ACEMS). Fred Roosta was partially supported by DARPA D3M as well as the Australian Research Council through a Discovery Early Career Researcher Award (DE180100923). 
\bibliographystyle{plain}
\bibliography{biblio}

\begin{thebibliography}{10}

\bibitem{ballard2017energy}
Andrew~J Ballard, Ritankar Das, Stefano Martiniani, Dhagash Mehta, Levent
  Sagun, Jacob~D Stevenson, and David~J Wales.
\newblock Energy landscapes for machine learning.
\newblock {\em Physical Chemistry Chemical Physics}, 19(20):12585--12603, 2017.

\bibitem{bandeira2014convergence}
Afonso~S Bandeira, Katya Scheinberg, and Lu{\'\i}s~N Vicente.
\newblock {Convergence of trust-region methods based on probabilistic models}.
\newblock {\em SIAM Journal on Optimization}, 24(3):1238--1264, 2014.

\bibitem{ben1986invexity}
A~Ben-Israel and B~Mond.
\newblock {What is invexity?}
\newblock {\em The ANZIAM Journal}, 28(1):1--9, 1986.

\bibitem{berahas2017investigation}
Albert~S Berahas, Raghu Bollapragada, and Jorge Nocedal.
\newblock {An investigation of Newton-sketch and subsampled Newton methods}.
\newblock {\em arXiv preprint arXiv:1705.06211}, 2017.

\bibitem{bernstein2009matrix}
Dennis~S Bernstein.
\newblock {\em {Matrix Mathematics: Theory, Facts, and Formulas With
  Application to Linear Systems Theory}}, volume~41.
\newblock Princeton University Press Princeton, 2009.

\bibitem{bertsekas2015convex}
Dimitri~P. Bertsekas.
\newblock {\em {Convex Optimization Algorithms}}.
\newblock Athena Scientific Belmont, 2015.

\bibitem{blanchet2016convergence}
Jose Blanchet, Coralia Cartis, Matt Menickelly, and Katya Scheinberg.
\newblock {Convergence rate analysis of a stochastic trust region method for
  nonconvex optimization}.
\newblock {\em arXiv preprint arXiv:1609.07428}, 2016.

\bibitem{bollapragada2016exact}
Raghu Bollapragada, Richard~H Byrd, and Jorge Nocedal.
\newblock {Exact and inexact subsampled Newton methods for optimization}.
\newblock {\em IMA Journal of Numerical Analysis}, 39(2):545--578, 2019.

\bibitem{boyd2004convex}
Stephen Boyd and Lieven Vandenberghe.
\newblock {\em {Convex optimization}}.
\newblock Cambridge university press, 2004.

\bibitem{byrd2011use}
Richard~H. Byrd, Gillian~M. Chin, Will Neveitt, and Jorge Nocedal.
\newblock {On the use of stochastic {H}essian information in optimization
  methods for machine learning}.
\newblock {\em SIAM Journal on Optimization}, 21(3):977--995, 2011.

\bibitem{byrd2012sample}
Richard~H. Byrd, Gillian~M. Chin, Jorge Nocedal, and Yuchen Wu.
\newblock {Sample size selection in optimization methods for machine learning}.
\newblock {\em Mathematical programming}, 134(1):127--155, 2012.

\bibitem{byrd1996analysis}
Richard~H Byrd, Humaid~Fayez Khalfan, and Robert~B Schnabel.
\newblock Analysis of a symmetric rank-one trust region method.
\newblock {\em SIAM Journal on Optimization}, 6(4):1025--1039, 1996.

\bibitem{calvetti2002gmres}
D~Calvetti, B~Lewis, and L~Reichel.
\newblock {GMRES, L-curves, and discrete ill-posed problems}.
\newblock {\em BIT Numerical Mathematics}, 42(1):44--65, 2002.

\bibitem{calvetti2000curve}
Daniela Calvetti, Bryan Lewis, and Lothar Reichel.
\newblock {L-curve for the MINRES method}.
\newblock In {\em Advanced Signal Processing Algorithms, Architectures, and
  Implementations X}, volume 4116, pages 385--396. International Society for
  Optics and Photonics, 2000.

\bibitem{carmon2017convex}
Yair Carmon, John~C Duchi, Oliver Hinder, and Aaron Sidford.
\newblock {Convex until proven guilty: Dimension-free acceleration of gradient
  descent on non-convex functions}.
\newblock In {\em Proceedings of the 34th International Conference on Machine
  Learning-Volume 70}, pages 654--663. JMLR. org, 2017.

\bibitem{carmon2016accelerated}
Yair Carmon, John~C Duchi, Oliver Hinder, and Aaron Sidford.
\newblock Accelerated methods for nonconvex optimization.
\newblock {\em SIAM Journal on Optimization}, 28(2):1751--1772, 2018.

\bibitem{cartis2011adaptiveI}
C~Cartis, N.~I.~M. Gould, and {\relax Ph}ilip~L. Toint.
\newblock {Adaptive cubic regularisation methods for unconstrained
  optimization. Part I: motivation, convergence and numerical results}.
\newblock {\em Mathematical Programming}, 127(2):245--295, 2011.

\bibitem{cartis2011adaptiveII}
C~Cartis, N.~I.~M. Gould, and {\relax Ph}ilip~L. Toint.
\newblock {Adaptive cubic regularisation methods for unconstrained
  optimization. Part II: worst-case function-and derivative-evaluation
  complexity}.
\newblock {\em Mathematical programming}, 130(2):295--319, 2011.

\bibitem{cartis2012complexity}
Coralia Cartis, N.~I.~M. Gould, and {\relax Ph}ilip~L. Toint.
\newblock {Complexity bounds for second-order optimality in unconstrained
  optimization}.
\newblock {\em Journal of Complexity}, 28(1):93--108, 2012.

\bibitem{chen2015stochastic}
Ruobing Chen, Matt Menickelly, and Katya Scheinberg.
\newblock Stochastic optimization using a trust-region method and random
  models.
\newblock {\em Mathematical Programming}, 169(2):447--487, 2018.

\bibitem{choi2011minres}
Sou-Cheng~T Choi, Christopher~C Paige, and Michael~A Saunders.
\newblock {MINRES-QLP: A Krylov subspace method for indefinite or singular
  symmetric systems}.
\newblock {\em SIAM Journal on Scientific Computing}, 33(4):1810--1836, 2011.

\bibitem{ciarlet2013linear}
P.G. Ciarlet.
\newblock {\em {Linear and Nonlinear Functional Analysis with Applications}}.
\newblock SIAM, 2013.

\bibitem{conn2000trust}
Andrew~R Conn, N.~I.~M. Gould, and {\relax Ph}ilip~L. Toint.
\newblock {\em {Trust region methods}}, volume~1.
\newblock SIAM, 2000.

\bibitem{conn1991convergence}
Andrew~R Conn, Nicholas~IM Gould, and Ph~L Toint.
\newblock {Convergence of quasi-Newton matrices generated by the symmetric rank
  one update}.
\newblock {\em Mathematical programming}, 50(1-3):177--195, 1991.

\bibitem{deng2010perturbation}
CY~Deng and Yimin Wei.
\newblock {Perturbation analysis of the Moore-Penrose inverse for a class of
  bounded operators in Hilbert spaces}.
\newblock {\em J. Korean Math. Soc}, 47(4):831--843, 2010.

\bibitem{dennis1996numerical}
John~E Dennis~Jr and Robert~B Schnabel.
\newblock {\em Numerical methods for unconstrained optimization and nonlinear
  equations}.
\newblock SIAM, 1996.

\bibitem{dolan2002benchmarking}
Elizabeth~D Dolan and Jorge~J Mor{\'e}.
\newblock {Benchmarking optimization software with performance profiles}.
\newblock {\em Mathematical programming}, 91(2):201--213, 2002.

\bibitem{duchi2011adaptive}
John Duchi, Elad Hazan, and Yoram Singer.
\newblock {Adaptive subgradient methods for online learning and stochastic
  optimization}.
\newblock {\em The Journal of Machine Learning Research}, 12:2121--2159, 2011.

\bibitem{erdogdu2015convergence}
Murat~A. Erdogdu and Andrea Montanari.
\newblock {Convergence rates of sub-sampled Newton methods}.
\newblock In {\em Advances in Neural Information Processing Systems 28}, pages
  3034--3042. 2015.

\bibitem{fong2012cg}
David Chin-Lung Fong and Michael Saunders.
\newblock {CG versus MINRES: An empirical comparison}.
\newblock {\em Sultan Qaboos University Journal for Science [SQUJS]},
  17(1):44--62, 2012.

\bibitem{frye2019numerically}
Charles~G Frye, Neha~S Wadia, Michael~R DeWeese, and Kristofer~E Bouchard.
\newblock Numerically recovering the critical points of a deep linear
  autoencoder.
\newblock {\em arXiv preprint arXiv:1901.10603}, 2019.

\bibitem{golub2012matrix}
G.H. Golub and C.F. Van~Loan.
\newblock {\em {Matrix Computations}}.
\newblock Johns Hopkins Studies in the Mathematical Sciences. Johns Hopkins
  University Press, 4 edition, 2012.

\bibitem{gould2016note}
Nicholas Gould and Jennifer Scott.
\newblock {A note on performance profiles for benchmarking software}.
\newblock {\em ACM Transactions on Mathematical Software (TOMS)}, 43(2):15,
  2016.

\bibitem{gratton2017complexity}
{Gratton, Serge and Royer, Cl{\'e}ment W and Vicente, Lu{\'\i}s N and Zhang,
  Zaikun}.
\newblock {Complexity and global rates of trust-region methods based on
  probabilistic models}.
\newblock {\em IMA Journal of Numerical Analysis}, 38(3):1579--1597, 2018.

\bibitem{griewank1993some}
Andreas Griewank.
\newblock {Some bounds on the complexity of gradients, Jacobians, and
  Hessians}.
\newblock {\em Complexity in Nonlinear Optimization}, pages 128--161, 1993.

\bibitem{hanke2017conjugate}
Martin Hanke.
\newblock {\em {Conjugate gradient type methods for ill-posed problems}}.
\newblock Routledge, 2017.

\bibitem{hanson1981sufficiency}
Morgan~A Hanson.
\newblock {On sufficiency of the Kuhn-Tucker conditions}.
\newblock {\em Journal of Mathematical Analysis and Applications},
  80(2):545--550, 1981.

\bibitem{hubbard2015vector}
John~H Hubbard and Barbara~Burke Hubbard.
\newblock {\em {Vector Calculus, Linear Algebra, and Differential Forms}}.
\newblock Matrix Editions, 5th edition, 2015.

\bibitem{kingma2014adam}
Diederik Kingma and Jimmy Ba.
\newblock {Adam: A method for stochastic optimization}.
\newblock {\em arXiv preprint arXiv:1412.6980}, 2014.

\bibitem{kylasa2018gpu}
Sudhir Kylasa, Fred Roosta, Michael~W Mahoney, and Ananth Grama.
\newblock {GPU Accelerated Sub-Sampled Newton's Method for Convex
  Classification Problems}.
\newblock In {\em Proceedings of the 2019 SIAM International Conference on Data
  Mining}, pages 702--710. SIAM, 2019.

\bibitem{larson2016stochastic}
Jeffrey Larson and Stephen~C Billups.
\newblock {Stochastic derivative-free optimization using a trust region
  framework}.
\newblock {\em Computational Optimization and Applications}, 64(3):619--645,
  2016.

\bibitem{levenberg1944algorithm}
Kenneth Levenberg.
\newblock {A method for the solution of certain problems in least squares}.
\newblock {\em Quarterly of Applied Mathematics}, 2(2):164--168, 1944.

\bibitem{marquardt1963algorithm}
Donald~W Marquardt.
\newblock {An algorithm for least-squares estimation of nonlinear parameters}.
\newblock {\em Journal of the Society for Industrial \& Applied Mathematics},
  11(2):431--441, 1963.

\bibitem{mei2016landscape}
Song Mei, Yu~Bai, and Andrea Montanari.
\newblock {The landscape of empirical risk for non-convex losses}.
\newblock {\em arXiv preprint arXiv:1607.06534}, 2016.

\bibitem{meng2010optimal}
Lingsheng Meng and Bing Zheng.
\newblock {The optimal perturbation bounds of the Moore--Penrose inverse under
  the Frobenius norm}.
\newblock {\em Linear Algebra and its Applications}, 432(4):956--963, 2010.

\bibitem{mishra2008invexity}
Shashi~K Mishra and Giorgio Giorgi.
\newblock {\em {Invexity and Optimization}}, volume~88.
\newblock Springer Science \& Business Media, 2008.

\bibitem{nesterov2004introductory}
Yurii Nesterov.
\newblock {\em {Introductory lectures on convex optimization}}, volume~87.
\newblock Springer Science \& Business Media, 2004.

\bibitem{nesterov2006cubic}
Yurii Nesterov and Boris~T Polyak.
\newblock {Cubic regularization of {N}ewton method and its global performance}.
\newblock {\em Mathematical Programming}, 108(1):177--205, 2006.

\bibitem{nocedal2006numerical}
Jorge Nocedal and Stephen Wright.
\newblock {\em {Numerical optimization}}.
\newblock Springer Science \& Business Media, 2006.

\bibitem{o2018random}
Sean O'Rourke, Van Vu, and Ke~Wang.
\newblock {Random perturbation of low rank matrices: Improving classical
  bounds}.
\newblock {\em Linear Algebra and its Applications}, 540:26--59, 2018.

\bibitem{paige1975solution}
Christopher~C Paige and Michael~A Saunders.
\newblock {Solution of sparse indefinite systems of linear equations}.
\newblock {\em SIAM journal on numerical analysis}, 12(4):617--629, 1975.

\bibitem{pearlmutter1994fast}
Barak~A Pearlmutter.
\newblock {Fast exact multiplication by the Hessian}.
\newblock {\em Neural computation}, 6(1):147--160, 1994.

\bibitem{pilanci2015newton}
Mert Pilanci and Martin~J. Wainwright.
\newblock {Newton Sketch: A Linear-time Optimization Algorithm with
  Linear-Quadratic Convergence}.
\newblock {\em arXiv preprint arXiv:1505.02250}, 2015.

\bibitem{roosta2018newton}
Fred Roosta, Yang Liu, Peng Xu, and Michael~W Mahoney.
\newblock {Newton-MR: Newton's Method Without Smoothness or Convexity}.
\newblock {\em arXiv preprint arXiv:1810.00303}, 2018.

\bibitem{ssn2018}
Fred Roosta and Michael~W. Mahoney.
\newblock {Sub-sampled Newton methods}.
\newblock {\em Mathematical Programming}, 174(1):293--326, 2019.

\bibitem{rodoas1}
Fred Roosta, Kees van~den Doel, and Uri Ascher.
\newblock {Stochastic algorithms for inverse problems involving {PDE}s and many
  measurements}.
\newblock {\em SIAM J. Scientific Computing}, 36(5):S3--S22, 2014.

\bibitem{royer2020newton}
Cl{\'e}ment~W Royer, Michael O’Neill, and Stephen~J Wright.
\newblock A newton-cg algorithm with complexity guarantees for smooth
  unconstrained optimization.
\newblock {\em Mathematical Programming}, 180(1):451--488, 2020.

\bibitem{royer2018complexity}
Cl{\'e}ment~W Royer and Stephen~J Wright.
\newblock {Complexity analysis of second-order line-search algorithms for
  smooth nonconvex optimization}.
\newblock {\em SIAM Journal on Optimization}, 28(2):1448--1477, 2018.

\bibitem{rudelson2010non}
Mark Rudelson and Roman Vershynin.
\newblock Non-asymptotic theory of random matrices: extreme singular values.
\newblock In {\em Proceedings of the International Congress of Mathematicians
  2010 (ICM 2010) (In 4 Volumes) Vol. I: Plenary Lectures and Ceremonies Vols.
  II--IV: Invited Lectures}, pages 1576--1602. World Scientific, 2010.

\bibitem{shalev2014understanding}
Shai Shalev-Shwartz and Shai Ben-David.
\newblock {\em {Understanding machine learning: From theory to algorithms}}.
\newblock Cambridge university press, 2014.

\bibitem{shashaani2016astro}
Sara Shashaani, Fatemeh~S Hashemi, and Raghu Pasupathy.
\newblock {ASTRO-DF: A class of adaptive sampling trust-region algorithms for
  derivative-free stochastic optimization}.
\newblock {\em SIAM Journal on Optimization}, 28(4):3145--3176, 2018.

\bibitem{stewart1984rank}
GW~Stewart.
\newblock {Rank degeneracy}.
\newblock {\em SIAM Journal on Scientific and Statistical Computing},
  5(2):403--413, 1984.

\bibitem{stewart1990matrix}
G.W. Stewart and Ji~guang Sun.
\newblock {\em {Matrix Perturbation Theory}}.
\newblock Academic Press, 1990.

\bibitem{sutskever2013importance}
Ilya Sutskever, James Martens, George Dahl, and Geoffrey Hinton.
\newblock On the importance of initialization and momentum in deep learning.
\newblock In {\em International conference on machine learning}, pages
  1139--1147, 2013.

\bibitem{tijmen2012rmsprop}
Tijmen Tieleman and Geoffrey Hinton.
\newblock Lecture 6.5-rmsprop: Divide the gradient by a running average of its
  recent magnitude.
\newblock {\em {COURSERA: Neural Networks for Machine Learning}}, 4, 2012.

\bibitem{wales2003energy}
David Wales et~al.
\newblock {\em {Energy landscapes: Applications to clusters, biomolecules and
  glasses}}.
\newblock Cambridge University Press, 2003.

\bibitem{xuNonconvexTheoretical2017}
Peng Xu, Fred Roosta, and Michael~W Mahoney.
\newblock {Newton-type methods for non-convex optimization under inexact
  Hessian information}.
\newblock {\em Mathematical Programming}, 2019.
\newblock doi:10.1007/s10107-019-01405-z.

\bibitem{xuNonconvexEmpirical2017}
Peng Xu, Fred Roosta, and Michael~W Mahoney.
\newblock {Second-order optimization for non-convex machine learning: An
  empirical study}.
\newblock In {\em Proceedings of the 2020 SIAM International Conference on Data
  Mining}, pages 199--207. SIAM, 2020.

\bibitem{xu2016sub}
Peng Xu, Jiyan Yang, Fred Roosta, Christopher R{\'e}, and Michael~W Mahoney.
\newblock {{Sub-sampled newton methods with non-uniform sampling}}.
\newblock In {\em Advances in Neural Information Processing Systems}, pages
  3000--3008, 2016.

\bibitem{yao2018inexact}
Zhewei Yao, Peng Xu, Fred Roosta, and Michael~W Mahoney.
\newblock {Inexact non-convex Newton-type methods}.
\newblock {\em arXiv preprint arXiv:1802.06925}, 2018.

\bibitem{zeiler2012adadelta}
Matthew~D Zeiler.
\newblock {ADADELTA: an adaptive learning rate method}.
\newblock {\em arXiv preprint arXiv:1212.5701}, 2012.

\end{thebibliography}
\end{document}